\theoremstyle{change}
\newcommand{\Z}{{\mathbb Z}}
\newcommand{\C}{{\mathbb C}}
\newcommand{\p}{\mathfrak p}
\newcommand{\OF}{{\mathfrak o}}
\newcommand{\GL}{{\rm GL}}
\newcommand{\GSp}{{\rm GSp}}
\newcommand{\St}{{\rm St}}
\newcommand{\triv}{{\mathbf1}}
\newcommand{\para}{K^{\rm para}}
\newcommand{\mat}[4]{{\setlength{\arraycolsep}{0.5mm}\left[
\begin{array}{cc}#1&#2\\#3&#4\end{array}\right]}}
\newcommand{\qed}{\hspace*{\fill}\rule{1ex}{1ex}}
\newcommand{\forget}[1]{}
\def\qdots{\mathinner{\mkern1mu\raise0pt\vbox{\kern7pt\hbox{.}}\mkern2mu
\raise3.4pt\hbox{.}\mkern2mu\raise7pt\hbox{.}\mkern1mu}}
\newenvironment{proof}{\vspace{1ex}\noindent{\it Proof.}\hspace{0.1em}}
	{\hfill\qed\vspace{2ex}}
\newtheorem{lemma}{Lemma.}[section]
\newtheorem{theorem}[lemma]{Theorem.}
\newtheorem{proposition}[lemma]{Proposition.}
\begin{document}
\thispagestyle{empty}
\begin{center}
 {\bf\Large Bessel models for $\GSp(4)$: Siegel vectors of square-free level}

 \vspace{3ex}
Ameya Pitale\footnote{{\tt apitale@math.ou.edu}\qquad $^2${\tt rschmidt@math.ou.edu}\\[0.5ex] \phantom{xxx}MSC: 11F46, 11F70\\ \phantom{xxx} The authors are supported by NSF grant DMS 1100541}, Ralf Schmidt$^2$

 \vspace{3ex}
 \begin{minipage}{80ex}
  \small{\sc Abstract}. We determine test vectors and explicit formulas for all Bessel models for those Iwahori-spherical representations of $\GSp_4$ over a $p$-adic field that have non-zero vectors fixed under the Siegel congruence subgroup.
 \end{minipage}
\end{center}

\section{Introduction}
For various classical groups, Bessel models of local or global representations have proven to be a useful substitute for the frequently missing Whittaker model. The uniqueness of Bessel models in the local non-archimedean case has now been established in a wide variety of cases; see \cite{AizGouRalSch2010}, \cite{GanGrossPrasad2012}. In this work we are only concerned with the group $\GSp_4$ over a $p$-adic field $F$, for which uniqueness of Bessel models was proven as early as 1973; see \cite{NovPia1973}.

Other than Whittaker models, which are essentially independent of any choices made, Bessel models depend on some arithmetic data. In the case of $\GSp_4$, part of this data is a choice of non-degenerate symmetric $2\times2$-matrix $S$ over the field $F$. The discriminant $\mathbf{d}$ of this matrix determines a quadratic extension $L$ of $F$; this extension may be isomorphic to $F\oplus F$, which will be referred to as the \emph{split case}. The second ingredient entering into the definition of a Bessel model is a character $\Lambda$ of the multiplicative group $L^\times$. 

Now let $(\pi,V)$ be an irreducible, admissible representation of $\GSp_4(F)$. Given $S$ and $\Lambda$, the representation $\pi$ may or may not have a Bessel model with respect to this data. In the case of $\GSp_4$, it is possible to precisely say which representations have which Bessel models; see \cite{PraTak2011}. In particular, every irreducible, admissible representation has a Bessel model for an appropriate choice of $S$ and $\Lambda$.

Given $\pi$, $S$ and $\Lambda$, it is one thing to know that a Bessel model exists, but it is another to identify a good \emph{test vector}. By definition, a test vector is a vector in the space of $\pi$ on which the relevant Bessel functional is non-zero. Equivalently, in the actual Bessel model consisting of functions $B$ on the group with the Bessel transformation property, $B$ is a test vector if and only if $B(1)$ is non-zero. In this paper, we will identify test vectors for a class of representations that is relevant for the theory of Siegel modular forms of degree $2$. In addition, we shall give explicit formulas for the corresponding Bessel functions. See \cite{Pitale2011}, \cite{Saha2009} for the Steinberg case and \cite{Sug1985} for the spherical case.

More precisely, the class of representations we consider are those that have a non-zero vector invariant under $P_1$, the Siegel ($\Gamma_0$-type) congruence subgroup of level $\p$. These representations appear as local components of global automorphic representations generated by Siegel modular forms of degree $2$ with respect to the congruence subgroup $\Gamma_0(N)$, where $N$ is a square-free positive integer. They fall naturally into thirteen classes, only four of which consist of generic representations (meaning representations that admit a Whittaker model); see our Table \ref{Iwahoritable} below.

Of the thirteen classes, six are actually spherical, meaning they have a non-zero vector invariant under the maximal compact subgroup $\GSp_4(\OF)$ (here, $\OF$ is the ring of integers of our local field $F$). Sugano \cite{Sug1985} has given test vectors and explicit formulas in the spherical cases. Our main focus, therefore, is on the seven classes consisting of non-spherical representations with non-zero $P_1$-fixed vectors. Of those, five classes have a one-dimensional space of $P_1$-fixed vectors, and two classes have a two-dimensional space of $P_1$-fixed vectors. The one-dimensional cases require a slightly different treatment from the two-dimensional cases. In all cases, our main tool will be two Hecke operators $T_{1,0}$ and $T_{0,1}$, coming from the double cosets
$$
 P_1\,{\rm diag}(\varpi,\varpi,1,1)P_1\qquad\text{and}\qquad P_1\,{\rm diag}(\varpi^2,\varpi,1,\varpi)P_1.
$$
Evidently, these operators act on the spaces of $P_1$-invariant vectors. In Sect.\ \ref{heckeeigenvaluessec} we give their eigenvalues for all of our seven classes of representations. The results are contained in Table \ref{Iheckeeigenvaluestable}. In the one-dimensional cases, trivially, the unique $P_1$-invariant vector is a common eigenvector for both $T_{1,0}$ and $T_{0,1}$. In the two-dimensional cases, it turns out there is a nice basis consisting of common eigenvectors for $T_{1,0}$ and $T_{0,1}$.

In Sect.\ \ref{T10sec} we will apply the two Hecke operators to $P_1$-invariant Bessel functions $B$ and evaluate at certain elements of $\GSp_4(F)$. Assuming that $B$ is an eigenfunction, this leads to several formulas relating the values of $B$ at various elements of the group; see Lemma \ref{T10actionlemma} as an example for this kind of result. The calculations are all based on a $\GL_2$ integration formula, which we establish in Lemma \ref{GL2integrationlemma1}.

In Sect.\ \ref{maintowersec} we use some of these formulas to establish a generating series for the values of $B$ at diagonal elements; see Proposition \ref{maintowerprop}. It turns out that there is one ``initial element''
$$
 h(0,m_0)={\rm diag}(\varpi^{2m_0},\varpi^{m_0},1,\varpi^{m_0}),
$$
where $m_0$ is the conductor of the Bessel character $\Lambda$; see (\ref{m0defeq}) for the precise definition. If $B(h(0,m_0))=0$, then $B$ is zero on \emph{all} diagonal elements.

A generating series like in Proposition \ref{maintowerprop} is precisely the kind of result that will be useful in global applications. However, it still has to be established that $B(h(0,m_0))\neq0$; this is the test vector problem, and it is not trivial since $B$ may vanish on all diagonal elements and yet be non-zero. It turns out that, in almost all cases, $B(h(0,m_0))\neq0$ as expected; see our main results Theorem \ref{onedimtheorem} and Theorem \ref{twodimtheorem}.

However, there is \emph{one} exceptional case, occurring only for split Bessel models of representations of type IIa, and then only for a certain unramified Bessel character $\Lambda$. In this very special case, our Bessel function $B$ is non-zero not at the identity, as expected, but at a certain other element; see Theorem \ref{onedimtheorem} i) for the precise statement.

To handle this exceptional case, and one other split case for VIa type representations, we require an additional tool besides our Hecke operators. This additional tool are zeta integrals, which are closely related to split Bessel models. We also require part of the theory of paramodular vectors from \cite{NF}. Roughly speaking, we take paramodular vectors and ``Siegelize'' them to obtain $P_1$-invariant vectors. Calculations with zeta integrals then establish the desired non-vanishing at specific elements. This is the topic of Sect.\ \ref{genericsplitsec}.

The final Sects.\ \ref{onedimsec} and \ref{twodimsec} contain our main results. Theorem \ref{onedimtheorem} exhibits test vectors in all one-dimensional cases, and Theorem \ref{twodimtheorem} exhibits test vectors in all two-dimensional cases. As mentioned above, explicit formulas for these vectors can be found in Proposition \ref{maintowerprop}. We mention that, evidently, explicit formulas imply uniqueness of Bessel models. Hence, as a by-product of our calculations, we reprove the uniqueness of these models for the representations under consideration.

Let us remark here that the proofs of several of the lemmas in this paper are very long but not conceptually very deep. Hence, we have omitted the proofs of these results. We direct the reader to \cite{Pitale-Schmidt-preprint-2012} for a longer version of this article which contains all the details of the proofs.
\section{Basic facts and definitions}\label{besselsubgroupsec}
Let $F$ be a non-archimedean local field of characteristic zero, $\OF$ its ring of integers, $\p$ the maximal ideal of $\OF$, and $\varpi$ a generator of $\p$. We fix a non-trivial character $\psi$ of $F$ such that $\psi$ is trivial on $\OF$ but non-trivial on $\p^{-1}$. We let $v$ be the normalized valuation map on $F$.

As in \cite{Fu1994} we fix three elements $\mathbf{a},\mathbf{b},\mathbf{c}$ in $F$ such that $\mathbf{d}=\mathbf{b}^2-4\mathbf{a}\mathbf{c}\neq0$. Let \begin{equation}\label{Sxidefeq}
 S=\mat{\mathbf{a}}{\frac{\mathbf{b}}2}{\frac{\mathbf{b}}2}{\mathbf{c}},\qquad
 \xi=\mat{\frac{\mathbf{b}}2}{\mathbf{c}}{-\mathbf{a}}{\frac{-\mathbf{b}}2}.
\end{equation}
Then $F(\xi)=F+F\xi$ is a two-dimensional $F$-algebra. If $\mathbf{d}$ is not a square in $F^\times$, then $F(\xi)$ is isomorphic to the field $L=F(\sqrt{\mathbf{d}})$ via the map $x+y\xi\mapsto x+y\frac{\sqrt{\mathbf{d}}}2$. If $\mathbf{d}$ is a square in $F^\times$, then $F(\xi)$ is isomorphic to $L=F\oplus F$ via $x+y\xi\mapsto(x+y\frac{\sqrt{\mathbf{d}}}2,x-y\frac{\sqrt{\mathbf{d}}}2)$. Let $z\mapsto\bar z$ be the obvious involution on $L$ whose fixed point set is $F$. The determinant map on $F(\xi)$ corresponds to the norm map on $L$, defined by $N(z)=z\bar z$. Let
\begin{equation}\label{TFdefeq}
 T(F)=\{g\in\GL_2(F):\:^tgSg=\det(g)S\}.
\end{equation}
One can check that $T(F)=F(\xi)^\times$, so that $T(F)\cong L^\times$ via the isomorphism $F(\xi)\cong L$. We define the Legendre symbol as
\begin{equation}\label{legendresymboldefeq}
 \Big(\frac L\p\Big)=\begin{cases}
                      -1&\text{if $L/F$ is an unramified field extension},\\
                      0&\text{if $L/F$ is a ramified field extension},\\
                      1&\text{if }L=F\oplus F.
                     \end{cases}
\end{equation}
These three cases are referred to as the \emph{inert case}, \emph{ramified case}, and \emph{split case}, respectively. If $L$ is a field, then let $\OF_L$ be its ring of integers and $\p_L$ be the maximal ideal of $\OF_L$. If $L = F \oplus F$, then let $\OF_L = \OF \oplus \OF$. Let $\varpi_L$ be a uniformizer in $\OF_L$ if $L$ is a field, and set $\varpi_L = (\varpi,1)$ if $L$ is not a field. In the field case let $v_L$ be the normalized valuation on $L$. 
Except in Sect.\ \ref{genericsplitsec}, where we consider certain split cases, we will make the following \emph{standard assumptions},
\begin{equation}\label{standardassumptions}
 \begin{minipage}{90ex}
  \begin{itemize}
   \item $\mathbf{a},\mathbf{b}\in\OF$ and $\mathbf{c}\in\OF^\times$.
   \item If $\mathbf{d}\notin F^{\times2}$, then $\mathbf{d}$ is a generator of the discriminant of $L/F$.
   \item If $\mathbf{d}\in F^{\times2}$, then $\mathbf{d}\in\OF^\times$.
  \end{itemize}
 \end{minipage}
\end{equation}
Under these assumptions, the group $T(\OF):=T(F)\cap\GL_2(\OF)$ is isomorphic to $\OF_L^\times$ via the isomorphism $T(F)\cong L^\times$. 

Under our assumptions (\ref{standardassumptions}) it makes sense to consider the quadratic equation $\mathbf{c}u^2+\mathbf{b}u+\mathbf{a}=0$ over the residue class field $\OF/\p$. The number of solutions of this equation is $\big(\frac L\p\big)+1$. In the ramified case we will fix an element $u_0\in\OF$ and in the split case we will fix two mod $\p$ inequivalent elements $u_1,u_2\in\OF$ such that
\begin{equation}\label{u1u2defeq}
 \mathbf{c}u_i^2+\mathbf{b}u_i+\mathbf{a}\in\p,\qquad i=0,1,2.
\end{equation}
%
%
\subsubsection*{Groups}
We define the group $\GSp_4$, considered as an algebraic $F$-group, using the symplectic form
$
 J=\mat{}{1_2}{-1_2}{}.
$
Hence, $\GSp_4(F)=\{g\in\GL_4(F):\:^tgJg=\mu(g)J\}$, where the scalar $\mu(g)\in F^\times$ is called the \emph{multiplier} of $g$. Let $Z$ be the center of $\GSp_4$. Let $B$ denote the Borel subgroup, $P$ the Siegel parabolic subgroup, and $Q$ the Klingen parabolic subgroup of $\GSp_4$.
Let $K=\GSp_4(\OF)$ be the standard maximal compact subgroup of $\GSp_4(F)$. The parahoric subgroups corresponding to $B$, $P$ and $Q$ are the \emph{Iwahori subgroup} $I$, the \emph{Siegel congruence subgroup} $P_1$, and the \emph{Klingen congruence subgroup} $P_2$, given by
\begin{equation}\label{parahoricsubgroupsdefeq}
 I=K\cap\begin{bmatrix}\OF&\p&\OF&\OF\\\OF&\OF&\OF&\OF\\\p&\p&\OF&\OF\\\p&\p&\p&\OF\end{bmatrix},\quad
 P_1=K\cap\begin{bmatrix}\OF&\OF&\OF&\OF\\\OF&\OF&\OF&\OF\\\p&\p&\OF&\OF\\\p&\p&\OF&\OF\end{bmatrix},\quad
 P_2=K\cap\begin{bmatrix}\OF&\p&\OF&\OF\\\OF&\OF&\OF&\OF\\\OF&\p&\OF&\OF\\\p&\p&\p&\OF\end{bmatrix}.
\end{equation}
We will also have occasion to consider, for a non-negative integer $n$, the \emph{paramodular group} of level $\p^n$, defined as
\begin{equation}\label{paradefeq}
 \para(\p^n)=\{g\in\GSp_4(F):\:g\in\begin{bmatrix}\OF&\p^n&\OF&\OF\\\OF&\OF&\OF&\p^{-n}\\\OF&\p^n&\OF&\OF\\\p^n&\p^n&\p^n&\OF\end{bmatrix},\:\det(g)\in\OF^\times\}.
\end{equation}
The eight-element Weyl group $W$ of $\GSp_4$, defined in the usual way as the normalizer modulo the centralizer of the subgroup of diagonal matrices, is generated by the images of
\begin{equation}\label{s1s2defeq}
 s_1=\begin{bmatrix}&1\\1\\&&&1\\&&1\end{bmatrix}\qquad\text{and}\qquad
 s_2=\begin{bmatrix}&&1\\&1\\-1\\&&&1\end{bmatrix}.
\end{equation}
%
\subsubsection*{Bessel models}
Let $\mathbf{a},\mathbf{b},\mathbf{c}$, the matrix $S$ and the torus $T(F)$ be as above. We consider $T(F)$ a subgroup of $\GSp_4(F)$ via
\begin{equation}\label{TFembeddingeq}
 T(F)\ni g\longmapsto\mat{g}{}{}{\det(g)\,^tg^{-1}}\in\GSp_4(F).
\end{equation}
Let $U(F)$ be the unipotent radical of the Siegel parabolic subgroup $P$
and $R(F)=T(F)U(F)$. We call $R(F)$ the \emph{Bessel subgroup} of $\GSp_4(F)$ (with respect to the given data $\mathbf{a},\mathbf{b},\mathbf{c}$). Let $\theta:\:U(F)\rightarrow\C^\times$ be the character given by
\begin{equation}\label{thetadefeq}
 \theta(\mat{1}{X}{}{1})=\psi({\rm tr}(SX)),
\end{equation}
where $\psi$ is our fixed character of $F$ of conductor $\OF$. 
We have $\theta(t^{-1}ut)=\theta(u)$ for all $u\in U(F)$ and $t\in T(F)$. Hence,
if $\Lambda$ is any character of $T(F)$, then the map
$tu\mapsto\Lambda(t)\theta(u)$ defines a character of $R(F)$. We denote
this character by $\Lambda\otimes\theta$. Let $\mathcal{S}(\Lambda,\theta)$ be the space of all locally constant functions $B:\:\GSp_4(F)\rightarrow\C$ with the \emph{Bessel transformation property}
\begin{equation}\label{Besseltransformationpropertyeq}
 B(rg)=(\Lambda\otimes\theta)(r)B(g)\qquad\text{for all $r\in R(F)$ and $g\in \GSp_4(F)$}.
\end{equation}
Our main object of investigation is the subspace $\mathcal{S}(\Lambda,\theta,P_1)$ consisting of functions that are right invariant under $P_1$. The group $\GSp_4(F)$ acts on $\mathcal{S}(\Lambda,\theta)$ by right translation. If an irreducible, admissible representation $(\pi,V)$ of $\GSp_4(F)$ is isomorphic to a subrepresentation of $\mathcal{S}(\Lambda,\theta)$, then this realization of $\pi$ is called a \emph{$(\Lambda,\theta)$-Bessel model}. It is known by \cite{NovPia1973}, \cite{PraTak2011} that such a model, if it exists, is unique; we denote it by $\mathcal{B}_{\Lambda,\theta}(\pi)$. Since the Bessel subgroup contains the center, an obvious necessary condition for existence is
$ \Lambda(z)=\omega_\pi(z)$ for all $z\in F^\times$,
where $\omega_\pi$ is the central character of $\pi$.
\subsubsection*{Change of models}
Of course, Bessel models can be defined with respect to any non-degenerate symmetric matrix $S$, not necessarily subject to the conditions (\ref{standardassumptions}) we imposed on $\mathbf{a},\mathbf{b},\mathbf{c}$. Since our calculations and explicit formulas will assume these conditions, we shall briefly describe how to switch to more general Bessel models. Hence, let $\lambda$ be in $F^\times$ and $A$ be in $\GL_2(F)$, and let $S'=\lambda\,^t\!ASA$. Replacing $S$ by $S'$ in the definitions (\ref{TFdefeq}) and (\ref{thetadefeq}), we obtain the group $T'(F)$  and the character $\theta'$ of $U(F)$. There is an isomorphism $T'(F)\rightarrow T(F)$ given by $t\mapsto AtA^{-1}$. Let $\Lambda'$ be the character of $T'(F)$ given by $\Lambda'(t)=\Lambda(AtA^{-1})$. For $B\in\mathcal{B}_{\Lambda,\theta}(\pi)$, let
$ B'(g)=B(\mat{A}{}{}{\lambda^{-1}\,^t\!A^{-1}}g)$, for $g\in \GSp_4(F)$.
It is easily verified that $B'$ has the $(\Lambda',\theta')$-Bessel transformation property, and that the map $B\mapsto B'$ provides an isomorphism $\mathcal{B}_{\Lambda,\theta}(\pi)\cong\mathcal{B}_{\Lambda',\theta'}(\pi)$.

Let $S=\mat{\mathbf{a}}{\frac{\mathbf{b}}2}{\frac{\mathbf{b}}2}{\mathbf{c}}$ be the usual matrix such that $\mathbf{d}=\mathbf{b}^2-4\mathbf{a}\mathbf{c}$ is a square in $F^\times$, and let
\begin{equation}\label{splitstandardSeq}
 S'=\mat{}{1/2}{1/2}{}.
\end{equation}
Then we can take $\lambda = 1$ and $ A=\frac1{\sqrt{\mathbf{d}}}\mat{1}{-2\mathbf{c}}{-\frac1{2\mathbf{c}}(\mathbf{b}-\sqrt{\mathbf{d}})}{\;\mathbf{b}+\sqrt{\mathbf{d}}}$ above to get $S'=\,^t\!ASA$. The torus $T'(F)$ is explicitly given by 
\begin{equation}\label{splitstandardTFeq}
 T'(F)=\{\mat{a}{}{}{d}:\:a,d\in F^\times\}.
\end{equation}
If $B\in\mathcal{S}(\Lambda,\theta,P_1)$ and $B$ is a $T_{1,0}$ eigenfunction (see (\ref{T10defeq})) with non-zero eigenvalue then an explicit computation shows that
\begin{equation}\label{BBpat1eq}
 B'(1)\neq0\qquad\Longleftrightarrow\qquad B(1)\neq0.
\end{equation}
See \cite{Pitale-Schmidt-preprint-2012} for details.
\subsubsection*{The Iwahori-spherical representations of $\GSp_4(F)$ and their Bessel models}
Table \ref{Iwahoritable} below is a reproduction of Table A.15 of \cite{NF}. It lists all the irreducible, admissible representations of $\GSp_4(F)$ that have a non-zero fixed vector under the Iwahori subgroup $I$, in a notation borrowed from \cite{SallyTadic1993}. 
In Table \ref{Iwahoritable}, all characters are assumed to be unramified. 
Further, Table \ref{Iwahoritable} lists the dimensions of the spaces of fixed vectors under the various parahoric subgroup; every parahoric subgroup is conjugate to exactly one of $K$, $P_{02}$, $P_2$, $P_1$ or $I$. In this paper we are interested in the representations that have a non-zero $P_1$-invariant vector. Except for the one-dimensional representations, these are the following:
\begin{itemize}
 \item I, IIb, IIIb, IVd, Vd and VId. These are the \emph{spherical} representations, meaning they have a $K$-invariant vecor.
 \item IIa, IVc, Vb, VIa and VIb. These are non-spherical representations that have a one-dimensional space of $P_1$-fixed vectors. Note that Vc is simply a twist of Vb by the character $\xi$, so we will not list it separately.
 \item IIIa and IVb. These are the non-spherical representations that have a two-dimensional space of $P_1$-fixed vectors.
\end{itemize}

\begin{table}
\caption[Iwahori-spherical representations]{The Iwahori-spherical representations of $\GSp_4(F)$ and the dimensions of their spaces of fixed vectors under the parahoric subgroups. Also listed are the conductor $a(\pi)$ and the value of the $\varepsilon$-factor at $1/2$. The symbol $\nu$ stands for the absolute value on $F^\times$, normalized such that $\nu(\varpi)=q^{-1}$, and $\xi$ stands for the non-trivial, unramified, quadratic character of $F^\times$.} \label{Iwahoritable}
$$\renewcommand{\arraystretch}{1.3}
 \begin{array}{cccccccccc}
  \midrule
   &&\pi&a(\pi)&\varepsilon(1/2,\pi)
    &\begin{minipage}{5.6ex}\begin{center}$K$\end{center}\end{minipage}
    &\begin{minipage}{5.6ex}\begin{center}$P_{02}$\end{center}\end{minipage}
    &\begin{minipage}{5.6ex}\begin{center}$P_2$\end{center}\end{minipage}
    &\begin{minipage}{5.6ex}\begin{center}$P_1$\end{center}\end{minipage}
    &\begin{minipage}{5.6ex}\begin{center}$I$\end{center}\end{minipage}\\
 \toprule
  {\rm I}&&\chi_1\times\chi_2\rtimes\sigma\quad
   \mbox{(irreducible)}&0&1&1&
   \renewcommand{\arraystretch}{0.5}\begin{array}[t]{c} 2 \\ {\scriptscriptstyle +-} \end{array} &4&
   \renewcommand{\arraystretch}{0.5}\begin{array}[t]{c} 4 \\ {\scriptscriptstyle ++}\\ {\scriptscriptstyle --}  \end{array} 
    &\!\!\!\renewcommand{\arraystretch}{0.5}\begin{array}[t]{c} 8 \\ {\scriptscriptstyle ++++}\\ {\scriptscriptstyle ----}\end{array}\!\!\! \\\midrule
  {\rm II}&{\rm a}&\chi\St_{\GL(2)}\rtimes\sigma&1&-(\sigma\chi)(\varpi)&0&
   \renewcommand{\arraystretch}{0.5}\begin{array}[t]{c} 1 \\ {\scriptscriptstyle -}\end{array}&2&\renewcommand{\arraystretch}{0.5}\begin{array}[t]{c} 1 \\ {\scriptscriptstyle -} \end{array}
   &\!\!\!\renewcommand{\arraystretch}{0.5}\begin{array}[t]{c} 4 \\ {\scriptscriptstyle +---} \end{array}\!\!\!
   \\
 \cmidrule{2-10}
  &{\rm b}&\chi\triv_{\GL(2)}\rtimes\sigma&0&1&1
  &\renewcommand{\arraystretch}{0.5}\begin{array}[t]{c} 1 \\ {\scriptscriptstyle +} \end{array}&2&\renewcommand{\arraystretch}{0.5}\begin{array}[t]{c} 3 \\ {\scriptscriptstyle ++-} \end{array}
  &\!\!\!\renewcommand{\arraystretch}{0.5}\begin{array}[t]{c} 4 \\ {\scriptscriptstyle +++-} \end{array}\!\!\!\\
 \midrule
  {\rm III}&{\rm a}&\chi\rtimes\sigma\St_{\GSp(2)}&2&1
   &0&0&1&\renewcommand{\arraystretch}{0.5}\begin{array}[t]{c} 2 \\ {\scriptscriptstyle +-} \end{array}
   &\!\!\!\renewcommand{\arraystretch}{0.5}\begin{array}[t]{c} 4 \\ {\scriptscriptstyle ++--} \end{array}\!\!\!\\
 \cmidrule{2-10}
  &{\rm b}&\chi\rtimes\sigma\triv_{\GSp(2)}
   &0&1&1&\renewcommand{\arraystretch}{0.5}\begin{array}[t]{c} 2 \\ {\scriptscriptstyle +-} \end{array}&3&
   \renewcommand{\arraystretch}{0.5}\begin{array}[t]{c} 2 \\ {\scriptscriptstyle +-} \end{array}
   &\!\!\!\renewcommand{\arraystretch}{0.5}\begin{array}[t]{c} 4 \\ {\scriptscriptstyle ++--} \end{array}\!\!\!\\
 \midrule
  {\rm IV}&{\rm a}&\sigma\St_{\GSp(4)}&3&-\sigma(\varpi)&0&0&0&0&
   \renewcommand{\arraystretch}{0.5}\begin{array}[t]{c} 1 \\ {\scriptscriptstyle -} \end{array}\\
 \cmidrule{2-10}
  &{\rm b}&L(\nu^2,\nu^{-1}\sigma\St_{\GSp(2)})&2&1&0&0&1&
   \renewcommand{\arraystretch}{0.5}\begin{array}[t]{c} 2 \\ {\scriptscriptstyle +-} \end{array}&\renewcommand{\arraystretch}{0.5}\begin{array}[t]{c} 3 \\ {\scriptscriptstyle ++-} \end{array}\\
 \cmidrule{2-10}
  &{\rm c}&L(\nu^{3/2}\St_{\GL(2)},\nu^{-3/2}\sigma)&1&-\sigma(\varpi)&0&
   \renewcommand{\arraystretch}{0.5}\begin{array}[t]{c} 1 \\ {\scriptscriptstyle -} \end{array}&2&\renewcommand{\arraystretch}{0.5}\begin{array}[t]{c} 1 \\ {\scriptscriptstyle -} \end{array}&\renewcommand{\arraystretch}{0.5}\begin{array}[t]{c} 3 \\ {\scriptscriptstyle +--} \end{array}\\
 \cmidrule{2-10}
  &{\rm d}&\sigma\triv_{\GSp(4)}&0&1&1&
  \renewcommand{\arraystretch}{0.5}\begin{array}[t]{c} 1 \\ {\scriptscriptstyle +} \end{array}&1&\renewcommand{\arraystretch}{0.5}\begin{array}[t]{c} 1 \\ {\scriptscriptstyle +} \end{array}&\renewcommand{\arraystretch}{0.5}\begin{array}[t]{c} 1 \\ {\scriptscriptstyle +} \end{array}\\
 \midrule
  {\rm V}&{\rm a}&\delta([\xi,\nu\xi],\nu^{-1/2}\sigma)&2&-1
   &0&0&1&0&\renewcommand{\arraystretch}{0.5}\begin{array}[t]{c} 2 \\ {\scriptscriptstyle +-} \end{array}\\
 \cmidrule{2-10}
  &{\rm b}&L(\nu^{1/2}\xi\St_{\GL(2)},\nu^{-1/2}\sigma)
   &1&\sigma(\varpi)&0&\renewcommand{\arraystretch}{0.5}\begin{array}[t]{c} 1 \\ {\scriptscriptstyle +} \end{array}&1&\renewcommand{\arraystretch}{0.5}\begin{array}[t]{c} 1 \\ {\scriptscriptstyle +} \end{array}&\renewcommand{\arraystretch}{0.5}\begin{array}[t]{c} 2 \\ {\scriptscriptstyle ++} \end{array}\\
 \cmidrule{2-10}
  &{\rm c}&\!\!L(\nu^{1/2}\xi\St_{\GL(2)},\xi\nu^{-1/2}\sigma)\!\!
   &1&-\sigma(\varpi)&0&\renewcommand{\arraystretch}{0.5}\begin{array}[t]{c} 1 \\ {\scriptscriptstyle -} \end{array}&1&\renewcommand{\arraystretch}{0.5}\begin{array}[t]{c} 1 \\ {\scriptscriptstyle -} \end{array}&\renewcommand{\arraystretch}{0.5}\begin{array}[t]{c} 2 \\ {\scriptscriptstyle --} \end{array}\\
 \cmidrule{2-10}
  &{\rm d}&L(\nu\xi,\xi\rtimes\nu^{-1/2}\sigma)&0&1
   &1&0&1&\renewcommand{\arraystretch}{0.5}\begin{array}[t]{c} 2 \\ {\scriptscriptstyle +-} \end{array}&\renewcommand{\arraystretch}{0.5}\begin{array}[t]{c} 2 \\ {\scriptscriptstyle +-} \end{array}\\
 \midrule
  {\rm VI}&{\rm a}&\tau(S,\nu^{-1/2}\sigma)&2&1&0&0&1&\renewcommand{\arraystretch}{0.5}\begin{array}[t]{c} 1 \\ {\scriptscriptstyle -} \end{array}&\renewcommand{\arraystretch}{0.5}\begin{array}[t]{c} 3 \\ {\scriptscriptstyle +--} \end{array}\\
 \cmidrule{2-10}
  &{\rm b}&\tau(T,\nu^{-1/2}\sigma)&2&1&0&0&0&
   \renewcommand{\arraystretch}{0.5}\begin{array}[t]{c} 1 \\ {\scriptscriptstyle +} \end{array}
  &\renewcommand{\arraystretch}{0.5}\begin{array}[t]{c} 1 \\ {\scriptscriptstyle +} \end{array}\\
 \cmidrule{2-10}
  &{\rm c}&L(\nu^{1/2}\St_{\GL(2)},\nu^{-1/2}\sigma)
   &1&-\sigma(\varpi)&0&\renewcommand{\arraystretch}{0.5}\begin{array}[t]{c} 1 \\ {\scriptscriptstyle -} \end{array}&1&0&\renewcommand{\arraystretch}{0.5}\begin{array}[t]{c} 1 \\ {\scriptscriptstyle -} \end{array}\\
 \cmidrule{2-10}
  &{\rm d}&L(\nu,1_{F^\times}\rtimes\nu^{-1/2}\sigma)
   &0&1&1&\renewcommand{\arraystretch}{0.5}\begin{array}[t]{c} 1 \\ {\scriptscriptstyle +} \end{array}&2&\renewcommand{\arraystretch}{0.5}\begin{array}[t]{c} 2 \\ {\scriptscriptstyle +-} \end{array}&\renewcommand{\arraystretch}{0.5}\begin{array}[t]{c} 3 \\ {\scriptscriptstyle ++-} \end{array}\\
 \toprule
 \end{array}
$$
\end{table}

Let $\pi$ be any irreducible, admissible representation of $\GSp_4(F)$. Let $S$ be a matrix as in (\ref{Sxidefeq}), and $\theta$ the associated character of $U(F)$; see (\ref{thetadefeq}). Given this data, one may ask for which characters $\Lambda$ of the torus $T(F)$, defined in (\ref{TFdefeq}) and embedded into $\GSp_4(F)$ via (\ref{TFembeddingeq}), the representation $\pi$ admits a $(\Lambda,\theta)$-Bessel model. This question can be answered, based on results from \cite{PraTak2011}. We have listed the data relevant for our current purposes in Table \ref{besselmodelstable}. Note that, in this table, the characters $\chi,\chi_1,\chi_2,\sigma,\xi$ are not necessarily assumed to be unramified, i.e., these results hold for all Borel-induced representations. For the split case $L=F\oplus F$ in Table \ref{besselmodelstable}, it is assumed that the matrix $S$ is the one in (\ref{splitstandardSeq}). The resulting torus $T(F)$ is given in (\ref{splitstandardTFeq}); embedded into $\GSp_4(F)$ it consists of all matrices ${\rm diag}(a,b,b,a)$ with $a,b$ in $F^\times$.

\begin{table}
\caption[Bessel models of Borel-induced representations]{The Bessel models of the irreducible, admissible representations of $\GSp_4(F)$ that can be obtained via induction from the Borel subgroup. The symbol $N$ stands for the norm map from $L^\times\cong T(F)$ to $F^\times$.} \label{besselmodelstable}
$$\renewcommand{\arraystretch}{1.1}
 \begin{array}{ccccc}
  \toprule
  &&\text{representation}&
  \multicolumn{2}{c}{(\Lambda,\theta)-\text{Bessel model existence condition}}
  \\
  \cmidrule{4-5}
  &&&L=F\oplus F&L/F\text{ a field extension}\\
  \toprule
  {\rm I}&& \chi_1 \times \chi_2 \rtimes \sigma\ 
  \mathrm{(irreducible)}&\text{all }\Lambda&\text{all }\Lambda\\
  \midrule
  \mbox{II}&\mbox{a}&\chi \St_{\GL(2)} \rtimes \sigma&
   \text{all }\Lambda&\Lambda\neq(\chi\sigma)\circ N \\
  \cmidrule{2-5}
  &\mbox{b}&\chi \triv_{\GL(2)} \rtimes \sigma
   &\Lambda=(\chi\sigma)\circ N &\Lambda=(\chi\sigma)\circ N \\
  \midrule
  \mbox{III}&\mbox{a}&\chi \rtimes \sigma \St_{\GSp(2)}&\text{all }\Lambda
   &\text{all }\Lambda\\\cmidrule{2-5}
  &\mbox{b}&\chi \rtimes \sigma \triv_{\GSp(2)}
   &\Lambda({\rm diag}(a,b,b,a))=&\text{---}\\
  &&&\chi(a)\sigma(ab)\text{ or }\chi(b)\sigma(ab)&\\
  \midrule
  \mbox{IV}&\mbox{a}&\sigma\St_{\GSp(4)}&\text{all }\Lambda&
  \Lambda\neq\sigma\circ N \\\cmidrule{2-5}
  &\mbox{b}&L(\nu^2,\nu^{-1}\sigma\St_{\GSp(2)})&\Lambda=\sigma\circ N 
   &\Lambda=\sigma\circ N \\\cmidrule{2-5}
  &\mbox{c}&L(\nu^{3/2}\St_{\GL(2)},\nu^{-3/2}\sigma)
   &\Lambda({\rm diag}(a,b,b,a))=&\text{---}\\
  &&&\nu(ab^{-1})\sigma(ab)\text{ or }\nu(a^{-1}b)\sigma(ab)&\\\cmidrule{2-5}
  &\mbox{d}&\sigma\triv_{\GSp(4)}&\text{---}&\text{---}\\
  \midrule
  \mbox{V}&\mbox{a}&\delta([\xi,\nu \xi], \nu^{-1/2} \sigma)&\text{all }\Lambda
   &\Lambda\neq\sigma\circ N ,\:\Lambda\neq(\xi\sigma)\circ N \\\cmidrule{2-5}
  &\mbox{b}&L(\nu^{1/2}\xi\St_{\GL(2)},\nu^{-1/2} \sigma)&\Lambda=\sigma\circ N 
   &\Lambda=\sigma\circ N ,\:\Lambda\neq(\xi\sigma)\circ N \\\cmidrule{2-5}
  &\mbox{c}&L(\nu^{1/2}\xi\St_{\GL(2)},\xi\nu^{-1/2} \sigma)
   &\Lambda=(\xi\sigma)\circ N 
   &\Lambda\neq\sigma\circ N ,\:\Lambda=(\xi\sigma)\circ N \\\cmidrule{2-5}
  &\mbox{d}&L(\nu\xi,\xi\rtimes\nu^{-1/2}\sigma)&\text{---}
   &\Lambda=\sigma\circ N ,\:\Lambda=(\xi\sigma)\circ N \\
  \midrule
  \mbox{VI}&\mbox{a}&\tau(S, \nu^{-1/2}\sigma)&\text{all }\Lambda
   &\Lambda\neq\sigma\circ N \\\cmidrule{2-5}
  &\mbox{b}&\tau(T, \nu^{-1/2}\sigma)&\text{---}&\Lambda=\sigma\circ N \\\cmidrule{2-5}
  &\mbox{c}&L(\nu^{1/2}\St_{\GL(2)},\nu^{-1/2}\sigma)
   &\Lambda=\sigma\circ N&\text{---}\\\cmidrule{2-5}
  &\mbox{d}&L(\nu,1_{F^\times}\rtimes\nu^{-1/2}\sigma)&
   \Lambda=\sigma\circ N&\text{---}\\
  \toprule
 \end{array}
$$
\end{table}
\section{Hecke eigenvalues}\label{heckeeigenvaluessec}
For integers $l$ and $m$, let $h(l,m)$ be as in (\ref{hlmdefeq}). Let $(\pi,V)$ be a smooth representation of $\GSp_4(F)$ for which $Z(\OF)$ acts trivially. We define two endomorphisms of $V$ by the formulas
\begin{align}
 \label{T10defeq}T_{1,0}v&=\frac1{{\rm vol}( P_1)}\int\limits_{ P_1h(1,0) P_1}\pi(g)v\,dg,\\
 \label{T01defeq}T_{0,1}v&=\frac1{{\rm vol}( P_1)}\int\limits_{ P_1h(0,1) P_1}\pi(g)v\,dg
\end{align}
and the \emph{Atkin-Lehner element}
\begin{equation}\label{ALdefeq}
 \eta=\begin{bmatrix}&&&-1\\&&1\\&\varpi\\-\varpi\end{bmatrix}=s_2s_1s_2\begin{bmatrix}\varpi\\&-\varpi\\&&1\\&&&-1\end{bmatrix}.
\end{equation}
Evidently, $T_{1,0}$ and $T_{0,1}$ induce endomorphisms of the subspace of $V$ consisting of $P_1$-invariant vectors. Table \ref{Iheckeeigenvaluestable} gives the eigenvalues of $T_{1,0}$ and $T_{0,1}$ on the space of $P_1$-invariant vectors for the representations in Table \ref{Iwahoritable} which have non-zero $P_1$-invariant vectors, but no non-zero $K$-invariant vectors. 
We will not give all the details of the eigenvalue calculation, since the method is similar to the one employed in \cite{schm2005}. See \cite{Pitale-Schmidt-preprint-2012} for details. Table \ref{satakenotationtable} explains the notation. 

\begin{table}[!htb]
\caption[Eigenvalues of some Iwahori-Hecke operators]{Eigenvalues of the operators $T_{1,0}$, $T_{0,1}$ and $\eta$ on spaces of $P_1$-invariant vectors in irreducible representations. } \label{Iheckeeigenvaluestable}
 $$
  \begin{array}{cccccc}
   \text{type}&\dim&T_{1,0}&T_{0,1}&\eta\\
   \toprule
   {\rm IIa}&1&\alpha\gamma q&\alpha^2\gamma^2(\alpha+\alpha^{-1})q^{3/2}&-\alpha\gamma\\
   {\rm IIIa}&2&\alpha\gamma q,\:\gamma q&\alpha\gamma^2(\alpha q+1)q,\:\alpha\gamma^2(\alpha^{-1}q+1)q&\pm\sqrt{\alpha}\gamma\\
   {\rm IVb}&2&\gamma,\:\gamma q^2&\gamma^2(q+1),\:\gamma^2q(q^3+1)&\pm\gamma\\
   {\rm IVc}&1&\gamma q&\gamma^2(q^3+1)&-\gamma\\
   {\rm Vb}&1&-\gamma q&-\gamma^2q(q+1)&\gamma\\
   {\rm VIa}&1&\gamma q&\gamma^2q(q+1)&-\gamma\\
   {\rm VIb}&1&\gamma q&\gamma^2q(q+1)&\gamma
  \end{array}
 $$
\end{table}
We make one more comment on the eigenvalues in Table \ref{Iheckeeigenvaluestable}, concerning the representations where the space of $P_1$ invariant vectors is two-dimensional. One can verify that the operators $T_{1,0}$ and $T_{0,1}$ commute, so that there exists a basis of common eigenvectors. The ordering for types IIIa and IVb in Table \ref{Iheckeeigenvaluestable} is such that \emph{the first eigenvalue for $T_{1,0}$ corresponds to the first eigenvalue for $T_{0,1}$, and the second eigenvalue for $T_{1,0}$ corresponds to the second eigenvalue for $T_{0,1}$}.
\begin{table}[!htb]
\caption[Notation for Sataka parameters]{Notation for Satake parameters for those Iwahori-spherical representations listed in Table \ref{Iheckeeigenvaluestable}. The ``restriction'' column reflects the fact that certain charcaters $\chi$ are not allowed in type IIa and IIIa representations. The last column shows the central character of the representation.} \label{satakenotationtable}
 $$
  \begin{array}{ccccccc}
   \text{type}&\text{representation}&\text{parameters}&\text{restrictions}&\text{c.c.}\\
   \toprule
   {\rm IIa}&\chi\St_{\GL(2)}\rtimes\sigma&\alpha=\chi(\varpi),\;\gamma=\sigma(\varpi)&\alpha^2\neq q^{\pm1},\:\alpha\neq q^{\pm3/2}&\chi^2\sigma^2\\
   {\rm IIIa}&\chi\rtimes\sigma\St_{\GSp(2)}&\alpha=\chi(\varpi),\;\gamma=\sigma(\varpi)&\alpha\neq1,\:\alpha\neq q^{\pm2}&\chi\sigma^2\\
   {\rm IVb}&L(\nu^2,\nu^{-1}\sigma\St_{\GSp(2)})&\gamma=\sigma(\varpi)&&\sigma^2\\
   {\rm IVc}&L(\nu^{3/2}\St_{\GL(2)},\nu^{-3/2}\sigma)&\gamma=\sigma(\varpi)&&\sigma^2\\
   {\rm Vb}&L(\nu^{1/2}\xi\St_{\GL(2)},\nu^{-1/2}\sigma)&\gamma=\sigma(\varpi)&&\sigma^2\\
   {\rm VIa}&\tau(S,\nu^{-1/2}\sigma)&\gamma=\sigma(\varpi)&&\sigma^2\\
   {\rm VIb}&\tau(T,\nu^{-1/2}\sigma)&\gamma=\sigma(\varpi)&&\sigma^2
  \end{array}
 $$
\end{table}
\section{Double cosets, an integration formula and automatic vanishing}
Let $\mathbf{a},\mathbf{b},\mathbf{c}$ be as in Sect.\ \ref{besselsubgroupsec}, subject to the conditions (\ref{standardassumptions}). Let $T(F)$ be the subgroup of $\GL_2(F)$ defined in (\ref{TFdefeq}). By \cite{Sug1985}, Lemma 2-4, there is a disjoint decomposition
\begin{equation}\label{toricdecompositioneq}
 \GL_2(F)=\bigsqcup_{m=0}^\infty T(F)\mat{\varpi^m}{}{}{1}\GL_2(\OF)
\end{equation}
(here it is important that our assumptions (\ref{standardassumptions}) on $\mathbf{a},\mathbf{b},\mathbf{c}$ are in force; for example, (\ref{toricdecompositioneq}) would obviously be wrong for $\mathbf{a}=\mathbf{c}=0$). The following two lemmas provide further decompositions for the group $\GL_2(\OF)$. We will use the notations
\begin{equation}\label{Gamma0pdefeq}
 \Gamma_0(\p)=\GL_2(\OF)\cap\mat{\OF}{\OF}{\p}{\OF}\qquad\text{and}\qquad
 \Gamma^0(\p)=\GL_2(\OF)\cap\mat{\OF}{\p}{\OF}{\OF}.
\end{equation}
\begin{lemma}\label{TOGL2OGammaplemma}
 \begin{enumerate}
  \item In the inert case $\big(\frac L\p\big)=-1$,
   \begin{equation}\label{TOGL2OGammaplemmaeq1}
    \GL_2(\OF)=T(\OF)\mat{}{1}{1}{}\Gamma_0(\p)=T(\OF)\Gamma^0(\p).
   \end{equation}
  \item In the ramified case $\big(\frac L\p\big)=0$,
   \begin{equation}\label{TOGL2OGammaplemmaeq2}
    \GL_2(\OF)=T(\OF)\mat{1}{}{u_0}{1}\Gamma_0(\p)\;\sqcup\;T(\OF)\mat{}{1}{1}{}\Gamma_0(\p)=T(\OF)\mat{1}{}{u_0}{1}\mat{}{1}{1}{}\Gamma^0(\p)\;\sqcup\;T(\OF)\Gamma^0(\p),
   \end{equation}
   with $u_0$ as in (\ref{u1u2defeq}). We have $T(\OF)\mat{1}{}{u_0}{1}\Gamma_0(\p)=\mat{1}{}{u_0}{1}\Gamma_0(\p)$.
  \item In the split case $\big(\frac L\p\big)=1$,
   \begin{align}\label{TOGL2OGammaplemmaeq3}
    \GL_2(\OF)&=T(\OF)\mat{1}{}{u_1}{1}\Gamma_0(\p)\;\sqcup\;T(\OF)\mat{1}{}{u_2}{1}\Gamma_0(\p)\;\sqcup\;T(\OF)\mat{}{1}{1}{}\Gamma_0(\p)\nonumber\\
    &=T(\OF)\mat{1}{}{u_1}{1}\mat{}{1}{1}{}\Gamma^0(\p)\;\sqcup\;T(\OF)\mat{1}{}{u_2}{1}\mat{}{1}{1}{}\Gamma^0(\p)\;\sqcup\;T(\OF)\Gamma^0(\p).
   \end{align}
   with $u_1,u_2$ as in (\ref{u1u2defeq}). We have $T(\OF)\mat{1}{}{u_i}{1}\Gamma_0(\p)=\mat{1}{}{u_i}{1}\Gamma_0(\p)$ for $i=1,2$.
   \item  Let $m$ be a positive integer, and
 \begin{equation}\label{TOmdefeq}
  T(\OF)_m:=\mat{\varpi^{-m}}{}{}{1}T(\OF)\mat{\varpi^m}{}{}{1}\cap\GL_2(\OF).
 \end{equation}
 Then
 \begin{equation}\label{TOGL2OGammapmlemmaeq1}
  \GL_2(\OF)=T(\OF)_m\Gamma_0(\p)\sqcup T(\OF)_m\mat{}{1}{1}{}\Gamma_0(\p)=T(\OF)_m\mat{}{1}{1}{}\Gamma^0(\p)\sqcup T(\OF)_m\Gamma^0(\p).
 \end{equation}
 We have $T(\OF)_m\Gamma_0(\p)=\Gamma_0(\p)$.
 \end{enumerate}
\end{lemma}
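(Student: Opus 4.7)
The plan is to identify the double cosets with orbits on the projective line. Since $\Gamma_0(\p)$ is the stabilizer in $\GL_2(\OF)$ of the line through $(1,0)^t$ modulo $\p$, there is a bijection $\GL_2(\OF)/\Gamma_0(\p) \cong \mathbb{P}^1(\OF/\p)$ given by $g\Gamma_0(\p) \mapsto g \cdot (1,0)^t \bmod \p$. Under this identification the double coset space $T(\OF)\backslash\GL_2(\OF)/\Gamma_0(\p)$ becomes the set of orbits of the image $\bar T \subset \GL_2(\OF/\p)$ acting on $\mathbb{P}^1(\OF/\p)$. The $\Gamma^0(\p)$ versions of the decompositions then follow by right-translating the $\Gamma_0(\p)$ versions by $\mat{}{1}{1}{}$, using $\mat{}{1}{1}{}\Gamma_0(\p)\mat{}{1}{1}{} = \Gamma^0(\p)$, so I only need to treat one side.

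Using the explicit form $T(\OF) = \{\mat{x+y\mathbf{b}/2}{y\mathbf{c}}{-y\mathbf{a}}{x-y\mathbf{b}/2} : x,y \in \OF,\ \text{norm in }\OF^\times\}$, a short matrix computation shows that a point $[1:u] \in \mathbb{P}^1(\OF/\p)$ is fixed by $\bar T$ if and only if $\mathbf{c}u^2+\mathbf{b}u+\mathbf{a} \equiv 0 \pmod{\p}$, and that $[0:1]$ is never fixed since $\mathbf{c} \in \OF^\times$. By (\ref{u1u2defeq}) the number of fixed points equals $\big(\frac{L}{\p}\big)+1$, which matches the number of singleton double cosets claimed in parts (i)--(iii). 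The same computation run in reverse shows that $\mat{1}{}{-u_i}{1}\,T(\OF)\,\mat{1}{}{u_i}{1} \subseteq \Gamma_0(\p)$ (the lower-left entry of the conjugate reduces to $-y(\mathbf{c}u_i^2+\mathbf{b}u_i+\mathbf{a}) \in \p$), which gives the asserted stabilizer identity $T(\OF)\mat{1}{}{u_i}{1}\Gamma_0(\p) = \mat{1}{}{u_i}{1}\Gamma_0(\p)$.

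It then remains to verify that the complement of the fixed points forms a single $\bar T$-orbit, represented by $\mat{}{1}{1}{}\Gamma_0(\p)$. I would do this by orbit--stabilizer: a direct determinant count gives $|\bar T| = q^2-1$, $q(q-1)$, $(q-1)^2$ in the inert, ramified, and split cases respectively, while the stabilizer of $[0:1]$ in $\bar T$ is in every case the scalar subgroup, of order $q-1$. Hence the orbit of $[0:1]$ has size $q+1-\big(\big(\frac{L}{\p}\big)+1\big)$, which together with the singleton orbits exhausts $\mathbb{P}^1(\OF/\p)$. Part (iv) follows the same outline: a short matrix manipulation shows $T(\OF)_m \subseteq \Gamma_0(\p)$ for $m \geq 1$, so $T(\OF)_m\Gamma_0(\p) = \Gamma_0(\p)$; the image of $T(\OF)_m$ in $\GL_2(\OF/\p)$ is the group of upper-triangular matrices with equal diagonal entries (order $q(q-1)$), which acts transitively on $\mathbb{P}^1(\OF/\p) \setminus \{[1:0]\}$, giving the second double coset. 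The main thing to watch is the correct determination of $|\bar T|$ in the ramified case, where the reduction reflects the depth-$2$ structure of $\OF_L$ rather than merely its residue field; the assumption $\mathbf{c} \in \OF^\times$ in (\ref{standardassumptions}) is what makes the parametrization by $(x,y)$ clean enough for this count.
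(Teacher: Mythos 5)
Your proof is correct and follows essentially the same route the paper indicates, namely reducing everything to the standard coset representatives for $\GL_2(\OF)/\Gamma_0(\p)$ and the explicit description of $T(\OF)$; your orbit--stabilizer bookkeeping on $\mathbb{P}^1(\OF/\p)$ is just a clean way of organizing that case check, and the passage to the $\Gamma^0(\p)$ statements by right translation is exactly right. The only point worth a second look is the degenerate split case $q=2$, where $\bar T$ reduces to scalars and $[0:1]$ is in fact fixed --- but there the three cosets are still distinct and exhaust $\GL_2(\OF)$, so the asserted decomposition is unaffected.
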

\begin{proof}
The lemma follows by using standard representatives for $\GL_2(\OF)/\Gamma_0(\p)$ and the definition of $T(\OF)$. See \cite{Pitale-Schmidt-preprint-2012}.
\end{proof}

We turn to double coset decompositions for $\GSp_4$. For $l, m \in \Z$, let
\begin{equation}\label{hlmdefeq}
 h(l,m)=\begin{bmatrix}\varpi^{l+2m}\\&\varpi^{l+m}\\&&1\\&&&\varpi^m\end{bmatrix}.
\end{equation}
Using (\ref{toricdecompositioneq}) and the Iwasawa decomposition, it is easy to see that
\begin{equation}\label{SuganoHFdecompositioneq}
 \GSp_4(F)=\bigsqcup_{\substack{l,m\in\Z\\m\geq0}}R(F)h(l,m)K;
\end{equation}
cf.\ (3.4.2) of \cite{Fu1994}. 
%
Using Lemma \ref{TOGL2OGammaplemma}, the right coset representatives for $K/P_1$ from \cite{NF}, Lemma 5.1.1,
and employing the useful identity
\begin{equation}\label{usefulidentityeq}
 \mat{1}{}{z}{1} = \mat{1}{z^{-1}}{}{1} \mat{-z^{-1}}{}{}{-z} \mat{}{1}{-1}{} \mat{1}{z^{-1}}{}{1},
\end{equation}
which holds for $z\in F^\times$, we get the following proposition (see \cite{Pitale-Schmidt-preprint-2012}).
\begin{proposition}\label{HFRFSipdecomplemma}
 \begin{enumerate}
\item
For $m>0$,
 \begin{align}\label{HFRFSipdecomplemmaeq2}
  R(F)h(l,m)K&=R(F)h(l,m) P_1\;\sqcup\;R(F)h(l,m)s_2 P_1\nonumber\\
  &\qquad\sqcup\;R(F)h(l,m)s_1s_2 P_1\;\sqcup\;R(F)h(l,m)s_2s_1s_2\, P_1,
 \end{align}
  \item In the inert case $\big(\frac L\p\big)=-1$,
   \begin{align*}
    R(F)h(l,0)K&=R(F)h(l,0) P_1\;\sqcup\;R(F)h(l,0)s_2 P_1\;\sqcup\;R(F)h(l,0)s_2s_1s_2 P_1.
   \end{align*}
  \item In the ramified case $\big(\frac L\p\big)=0$,
   \begin{align*}
    R(F)h(l,0)K&=R(F)h(l,0) P_1\;\sqcup\;R(F)h(l,0)s_2 P_1\;\sqcup\;R(F)h(l,0)s_2s_1s_2 P_1\\
    &\qquad\sqcup\;R(F)h(l,0)\hat u_0s_1s_2\, P_1,
   \end{align*}
   where
   \begin{equation}\label{hatu0defeq}
    \hat u_0=\begin{bmatrix}1\\u_0&1\\&&1&-u_0\\&&&1\end{bmatrix}.
   \end{equation}
  \item In the split case $\big(\frac L\p\big)=1$,
   \begin{align*}
    R(F)h(l,0)K&=R(F)h(l,0) P_1\;\sqcup\;R(F)h(l,0)s_2 P_1\;\sqcup\;R(F)h(l,0)s_2s_1s_2 P_1\\
    &\qquad\sqcup\;R(F)h(l,0)\hat u_1s_1s_2\, P_1\;\sqcup\;R(F)h(l,0)\hat u_2s_1s_2\, P_1,
   \end{align*}
   where, for $i=1,2$,
   \begin{equation}\label{hatu1u2defeq}
    \hat u_i=\begin{bmatrix}1\\u_i&1\\&&1&-u_i\\&&&1\end{bmatrix}.
   \end{equation}
 \end{enumerate}
\end{proposition}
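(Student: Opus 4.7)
The plan is to combine the explicit right-coset decomposition $K=\bigsqcup_i k_iP_1$ from \cite{NF}, Lemma 5.1.1, with the $\GL_2$ double-coset decompositions in Lemma \ref{TOGL2OGammaplemma}. Substituting into (\ref{SuganoHFdecompositioneq}) yields
\[
R(F)h(l,m)K=\bigcup_i R(F)h(l,m)k_iP_1,
\]
so the problem reduces to identifying coincidences among these cosets and verifying their disjointness. I will group the representatives $k_i$ of \cite{NF}, Lemma 5.1.1 into four classes indexed by the minimal-length representatives $w\in\{1,s_2,s_1s_2,s_2s_1s_2\}$ of $W/W_{P_1}$ (here $W_{P_1}=\langle s_1\rangle$ since $s_1\in P_1$, which one checks directly from (\ref{parahoricsubgroupsdefeq})). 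Within the $w$-cell the $k_i$ can be taken of the form $A\cdot w$, with $A$ ranging over a transversal of $\GL_2(\OF)$ (embedded in the Siegel Levi via (\ref{TFembeddingeq})) modulo either $\Gamma_0(\p)$ or $\Gamma^0(\p)$, according as $w$ is $1$ or $s_1s_2$.

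Under (\ref{TFembeddingeq}) the torus $T(\OF)$ is a subgroup of $R(F)$, so the left factor $T(\OF)$ appearing in Lemma \ref{TOGL2OGammaplemma} absorbs into $R(F)$; after conjugation through $h(l,m)$ this becomes exactly the group $T(\OF)_m$ of Lemma \ref{TOGL2OGammaplemma}(iv). On the right, both $\Gamma_0(\p)$ and $\Gamma^0(\p)$ embed into $P_1$ (every element of $\GL_2(\OF)$ embedded via the Levi has zero entries in positions $(3,1),(3,2),(4,1),(4,2)$, hence lies in $P_1$) and absorb into $P_1$. The representative $\mat{}{1}{1}{}$ embeds, up to a central element of $P_1$, as $s_1\in P_1$; hence in the $w=1$ cell it merely reproduces $R(F)h(l,m)P_1$, while in the $w=s_1s_2$ cell it gives $R(F)h(l,m)s_2P_1$, consistent with the $s_2$-cell. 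The remaining representatives $\mat{1}{}{u_i}{1}$ from Lemma \ref{TOGL2OGammaplemma}(ii)--(iii) embed as the elements $\hat u_i$ of (\ref{hatu0defeq}), (\ref{hatu1u2defeq}) and give rise to the extra cosets in parts (iii) and (iv); in the inert case no such $u$ exists, so the $w=s_1s_2$ cell disappears and only the three cosets of part (ii) remain. For $m>0$, the identity (\ref{usefulidentityeq}) applied to $\hat u_i$ with $z$ equal to an appropriate power of $\varpi$ times $u_i^{-1}$ factors it as $p_1\,d\,s_1\,p_2$ with $p_1,p_2$ upper-triangular and $d$ diagonal; after conjugating through $h(l,m)$, the hypothesis $m>0$ forces the lower-left blocks of $p_1,p_2$ into $\p$ (so they land in $P_1$), while $d$ lands in $T(F)\subset R(F)$. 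This yields $R(F)h(l,m)\hat u_is_1s_2P_1\subset R(F)h(l,m)s_2P_1$, ruling out any $\hat u_i$-type extras in part (i).

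Disjointness is then checked by reducing modulo $\p$: distinct $w\in\{1,s_2,s_1s_2,s_2s_1s_2\}$ yield $k_i$ lying in distinct Bruhat cells of $\GSp_4(\OF/\p)/P(\OF/\p)$, which cannot be identified by elements of $h(l,m)^{-1}R(F)h(l,m)\cap K$ (a subgroup that reduces into the Siegel parabolic and preserves the Bruhat stratification). In the split case, the mod-$\p$ inequivalence of $u_1,u_2$ built into (\ref{u1u2defeq}) distinguishes the two $\hat u_i$-cosets. The main technical obstacle is the careful valuation bookkeeping in the application of (\ref{usefulidentityeq}): one must verify that the outer factors of that identity, after conjugation by $h(l,m)$, lie in $P_1$ precisely when $m>0$, and not otherwise -- this is exactly what distinguishes part (i) from parts (iii)--(iv). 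The remaining verifications are routine but lengthy; full details are in \cite{Pitale-Schmidt-preprint-2012}.
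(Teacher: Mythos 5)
Your overall recipe coincides with the paper's: the authors likewise obtain the proposition by combining the $K/P_1$ representatives of \cite{NF}, Lemma 5.1.1, with Lemma \ref{TOGL2OGammaplemma} and the identity (\ref{usefulidentityeq}), and defer the bookkeeping to \cite{Pitale-Schmidt-preprint-2012}. However, two specific steps of your sketch would fail as written. First, in part (i) you factor $\hat u_i$ via (\ref{usefulidentityeq}) as $p_1\,d\,s_1\,p_2$ and assert that the diagonal factor $d$ ``lands in $T(F)\subset R(F)$.'' Under the standard assumptions (\ref{standardassumptions}) the only diagonal matrices in $T(F)$ are the scalars: for $g={\rm diag}(a,d)$ the condition $^tgSg=\det(g)S$ gives $d^2\mathbf{c}=ad\mathbf{c}$, and since $\mathbf{c}\in\OF^\times$ this forces $a=d$. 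So the factor $\mat{-z^{-1}}{}{}{-z}$ embedded in the Siegel Levi is \emph{not} in $R(F)$ and cannot be absorbed on the left. The collapsing of the $\hat u$-type cosets for $m>0$ should instead come from Lemma \ref{TOGL2OGammaplemma} iv): for $u\in\OF^\times$ one has $\mat{1}{}{u}{1}\in T(\OF)_m\mat{}{1}{1}{}\Gamma_0(\p)$, the conjugate of $T(\OF)_m$ through $h(l,m)$ is absorbed into $R(F)$, and what remains is a Weyl element times an element of $P_1$. You cite part iv) for the torus absorption but then bypass it at exactly the point where it is needed.

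Second, the disjointness argument is too coarse. Reduction mod $\p$ sends $h(l,m)^{-1}R(F)h(l,m)\cap K$ into the Siegel parabolic of $\GSp_4(\OF/\p)$ (note that the embedded $T(\OF)$ reduces to a torus of the Levi that is in general not contained in any Borel, e.g.\ a non-split torus of $\GL_2(\OF/\p)$ in the inert case), and the orbits of that parabolic on $\GSp_4(\OF/\p)/P(\OF/\p)$ are indexed by $W_{P_1}\backslash W/W_{P_1}$, which has only \emph{three} elements: $s_2$ and $s_1s_2$ lie in the same double coset. Hence ``distinct Bruhat cells'' does not separate $R(F)h(l,m)s_2P_1$ from $R(F)h(l,m)s_1s_2P_1$, nor the $s_2$-coset from the $\hat u_is_1s_2$-cosets in parts (iii)--(iv) --- and this is precisely where the arithmetic of $L/F$ enters. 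The separation within that middle cell has to come from the disjointness statements built into Lemma \ref{TOGL2OGammaplemma} (the orbit structure of $T(\OF)$, resp.\ $T(\OF)_m$, on $\GL_2(\OF)/\Gamma_0(\p)$), together with a valuation argument; only the remaining separations follow from the cell stratification you invoke.
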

\subsection*{An integration formula}
The integration formula on $\GL_2(\OF)$ presented in the following lemma will be used in many of our Hecke operator calculations. Let $\Lambda$ be a character of $L^\times\cong T(F)$. Let
\begin{equation}\label{m0defeq}
 m_0 = {\rm min}\big\{m \geq 0\::\:\Lambda\big|_{(1+\p^m\OF_L)\cap \OF_L^\times}=1\big\}.
\end{equation}
\begin{lemma}\label{GL2integrationlemma1}
 Let $\Lambda$ be a character of $L^\times\cong T(F)$ which is trivial on $\OF^\times$, and let $m_0$ be as in (\ref{m0defeq}). Let $m$ be a non-negative integer. Let $f:\:\GL_2(\OF)\rightarrow\C$ be a function with the property
 $$
  f(tg)=\Lambda(\mat{\varpi^m}{}{}{1}t\mat{\varpi^{-m}}{}{}{1})f(g)\qquad\text{for all $t\in T(\OF)_m$}.
 $$
 Let the Haar measure on $\GL_2(\OF)$ be normalized such that the total volume is $1$.
 \begin{enumerate}
  \item Assume that $f$ is right invariant under $\Gamma_0(\p)$. Then
   $$
    \int\limits_{\GL_2(\OF)}f(g)\,dg=\begin{cases}
     0&\text{if }m<m_0,\\[1ex]
     \displaystyle\renewcommand{\arraystretch}{1.0}\frac1{q+1}(f(1)+qf(\mat{}{1}{1}{}))&\text{if }m\geq {\rm max}(m_0,1),\\[3ex]
     \displaystyle\renewcommand{\arraystretch}{1.0}f(\mat{}{1}{1}{})&\text{if }m=m_0=0,\;\big(\frac L\p\big)=-1,\\[3ex]
     \displaystyle\renewcommand{\arraystretch}{1.0}\frac1{q+1}(f(\mat{1}{}{u_0}{1})+qf(\mat{}{1}{1}{}))&\text{if }m=m_0=0,\;\big(\frac L\p\big)=0,\\[3ex]
     \displaystyle\renewcommand{\arraystretch}{1.0}\frac1{q+1}(f(\mat{1}{}{u_1}{1})+f(\mat{1}{}{u_2}{1})+(q-1)f(\mat{}{1}{1}{}))\hspace{-15ex}\\[2ex]
     &\text{if }m=m_0=0,\;\big(\frac L\p\big)=1.
    \end{cases}
   $$
  \item Assume that $f$ is right invariant under $\Gamma^0(\p)$. Then
   $$
    \int\limits_{\GL_2(\OF)}f(g)\,dg=\begin{cases}
     0&\text{if }m<m_0,\\[1ex]
     \displaystyle\renewcommand{\arraystretch}{1.0}\frac1{q+1}(f(\mat{}{1}{1}{})+qf(1))&\text{if }m\geq {\rm max}(m_0,1),\\[3ex]
     \displaystyle\renewcommand{\arraystretch}{1.0}f(1)&\text{if }m=m_0=0,\;\big(\frac L\p\big)=-1,\\[3ex]
     \displaystyle\renewcommand{\arraystretch}{1.0}\frac1{q+1}(f(\mat{1}{}{u_0}{1}\mat{}{1}{1}{})+qf(1))&\text{if }m=m_0=0,\;\big(\frac L\p\big)=0,\\[3ex]
     \displaystyle\renewcommand{\arraystretch}{1.0}\frac1{q+1}(f(\mat{1}{}{u_1}{1}\mat{}{1}{1}{})+f(\mat{1}{}{u_2}{1}\mat{}{1}{1}{})+(q-1)f(1))\hspace{-21ex}\\[2ex]
     &\text{if }m=m_0=0,\;\big(\frac L\p\big)=1.
    \end{cases}
   $$
 \end{enumerate}
\end{lemma}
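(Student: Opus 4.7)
The plan is to combine a translation-invariance argument with the double-coset decompositions of Lemma \ref{TOGL2OGammaplemma}. For any $t \in T(\OF)_m$, left-invariance of Haar measure together with the transformation property of $f$ give
$$
 \int\limits_{\GL_2(\OF)} f(g)\,dg = \int\limits_{\GL_2(\OF)} f(tg)\,dg = \chi_m(t)\int\limits_{\GL_2(\OF)} f(g)\,dg,
$$
where $\chi_m(t) := \Lambda\bigl(\mat{\varpi^m}{}{}{1}t\mat{\varpi^{-m}}{}{}{1}\bigr)$. Hence the integral vanishes whenever $\chi_m$ is a non-trivial character of $T(\OF)_m$, and the problem reduces to pinning down when $\chi_m$ is trivial and, in that case, evaluating the integral on each double coset in a suitable decomposition of $\GL_2(\OF)$.

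For the triviality question, writing an element of $T(\OF)_m$ as the conjugate of some $x + y\xi \in T(\OF)$ and using $\mathbf{c} \in \OF^\times$ forces $x \in \OF$ and $y \in \p^m$. A case-by-case check (inert, ramified, split) of the isomorphism $T(F) \cong L^\times$ shows that the image of $\mat{\varpi^m}{}{}{1}T(\OF)_m\mat{\varpi^{-m}}{}{}{1}$ in $\OF_L^\times$ equals $\OF^\times(1+\p^m\OF_L)\cap\OF_L^\times$ for $m\geq 1$ and equals $\OF_L^\times$ for $m=0$. Since $\Lambda|_{\OF^\times}=1$, the definition of $m_0$ in (\ref{m0defeq}) shows that $\chi_m$ is trivial exactly when $m\geq m_0$; this yields the vanishing entries of the lemma. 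When $\chi_m$ is trivial, Lemma \ref{TOGL2OGammaplemma} gives a decomposition $\GL_2(\OF) = \bigsqcup_\gamma T(\OF)_m\gamma\Gamma_0(\p)$. On each double coset, right-$\Gamma_0(\p)$-invariance of $f$ together with triviality of $\chi_m$ force $f$ to be constant with value $f(\gamma)$, so the contribution to the integral is ${\rm vol}(T(\OF)_m\gamma\Gamma_0(\p))f(\gamma)$. The volumes follow from $[\GL_2(\OF):\Gamma_0(\p)]=q+1$ by counting how many right $\Gamma_0(\p)$-cosets each double coset contains: for $m\geq 1$, part (iv) of Lemma \ref{TOGL2OGammaplemma} gives weights $\frac{1}{q+1}$ and $\frac{q}{q+1}$, producing $\frac{1}{q+1}(f(1)+qf(\mat{}{1}{1}{}))$; for $m=m_0=0$, parts (i)--(iii) yield the three $\big(\frac{L}{\p}\big)$-dependent cases with weights $1$ (inert), $\frac{1}{q+1},\frac{q}{q+1}$ (ramified), and $\frac{1}{q+1},\frac{1}{q+1},\frac{q-1}{q+1}$ (split), matching the stated formulas.

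Part (ii) of the lemma is entirely parallel: substitute $\Gamma^0(\p)$ for $\Gamma_0(\p)$ and use the $\Gamma^0(\p)$-decompositions from Lemma \ref{TOGL2OGammaplemma}. The roles of $1$ and $\mat{}{1}{1}{}$ as coset representatives are interchanged, which accounts for the swapped appearances of $f(1)$ and $f(\mat{}{1}{1}{})$ between the two parts of the statement. The main technical obstacle is the identification of $\mat{\varpi^m}{}{}{1}T(\OF)_m\mat{\varpi^{-m}}{}{}{1}$ with $\OF^\times(1+\p^m\OF_L)$ in each of the three cases $\big(\frac{L}{\p}\big)=-1,0,1$, relying on $v(\mathbf{d})\in\{0,1\}$ from the standard assumptions; once this is in hand, everything else is routine, if somewhat lengthy, bookkeeping.
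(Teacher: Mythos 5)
Your proposal is correct and follows essentially the same route as the paper, which deduces the lemma directly from the double-coset decompositions of Lemma \ref{TOGL2OGammaplemma}: the vanishing for $m<m_0$ comes from non-triviality of the character $t\mapsto\Lambda(\mat{\varpi^m}{}{}{1}t\mat{\varpi^{-m}}{}{}{1})$ on $T(\OF)_m$, and the remaining cases come from weighting the coset representatives by volumes computed via $[\GL_2(\OF):\Gamma_0(\p)]=q+1$ and the identities $T(\OF)_m\Gamma_0(\p)=\Gamma_0(\p)$, $T(\OF)\mat{1}{}{u_i}{1}\Gamma_0(\p)=\mat{1}{}{u_i}{1}\Gamma_0(\p)$. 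The identification of the conjugated $T(\OF)_m$ with $\OF^\times(1+\p^m\OF_L)$, which you correctly flag as the one point requiring the standard assumptions on $\mathbf{a},\mathbf{b},\mathbf{c}$, is exactly the content implicitly used in the paper's appeal to the definition of $m_0$.
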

\begin{proof}
The lemma follows from Lemma \ref{TOGL2OGammaplemma}.
\end{proof}

\subsection*{Automatic vanishing}

Many of the cosets from Proposition \ref{HFRFSipdecomplemma} cannot be in the support of a Bessel function. The following lemma exhibits several cases of automatic vanishing. 

\begin{lemma}\label{automaticvanishinglemma}
 Let $\Lambda$ be a character of $L^\times\cong T(F)$ which is trivial on $\OF^\times$, and let $m_0$ be as in (\ref{m0defeq}). Let $B\in\mathcal{S}(\Lambda,\theta,P_1)$, and let $l$ and $m$ be integers.
 Then:
 \begin{enumerate}
  \item
   \begin{align*}
   & B(h(l,m))=B(h(l,m)s_2)=0\qquad\text{if }l<0, \\
   & B(h(l,m)s_1s_2)=B(h(l,m)s_2s_1s_2)=0\qquad\text{if }l<-1.
   \end{align*}
  \item
   \begin{align*}
   & B(h(l,m))=B(h(l,m)s_2s_1s_2)=0\qquad\text{for any $l$ and $0\leq m<m_0$},\\
   & B(h(l,m)s_2) = 0 \qquad\text{for any $l$ and $0\leq m<m_0-1$}.
   \end{align*}
  \item Assume that $m_0>0$. Then
   $$
    B(h(l,0)\hat u_is_1s_2)=0
   $$
   with $i=0$ in the ramified case and $i\in\{1,2\}$ in the split case.
  \item In the ramified case,
   $$
    B(h(l,0)\hat u_0s_1s_2)=0\qquad\text{for }l<-1.
   $$
  \item In the split case,
   $$
    B(h(l,0)\hat u_is_1s_2)=0\qquad\text{for }l<0.
   $$
 \end{enumerate}
\end{lemma}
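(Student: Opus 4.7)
The plan is to use the following uniform principle: for each $g\in\GSp_4(F)$ in the statement, I will exhibit $r\in R(F)$ such that $g^{-1}rg\in P_1$ and $(\Lambda\otimes\theta)(r)\neq1$. Then $(\Lambda\otimes\theta)(r)\,B(g)=B(rg)=B(g\cdot(g^{-1}rg))=B(g)$, which forces $B(g)=0$. Parts (i), (iv), (v) will take $r$ inside the unipotent radical $U(F)$, while (ii) and (iii) will take $r$ in the torus $T(F)$.

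For (i), I set $r=u=\mat{1}{X}{}{1}$ with $X=\mat{x_1}{x_2}{x_2}{x_3}$ and compute
$$
 h(l,m)^{-1}uh(l,m)=\mat{1}{X'}{}{1},\qquad X'=\mat{\varpi^{-(l+2m)}x_1}{\varpi^{-(l+m)}x_2}{\varpi^{-(l+m)}x_2}{\varpi^{-l}x_3}.
$$
Further conjugation by the Weyl elements $s_2$, $s_1s_2$, $s_2s_1s_2$ rearranges the entries and, in the latter two cases, forces $\varpi^{-l}x_3\in\p$ rather than merely $\OF$. Taking $x_1=x_2=0$ and $x_3$ in the appropriate ideal, the value $\theta(u)=\psi(\mathbf{c}x_3)$ can be made non-trivial exactly when that ideal strictly contains $\OF$---i.e.\ when $l<0$ in the first two cases and $l<-1$ in the remaining two, because $\mathbf{c}\in\OF^\times$ and $\psi$ is non-trivial on $\p^{-1}$.

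For the torus cases (ii) and (iii), I write $t$ as coming from $g=\mat{a'}{b'}{c'}{d'}\in T(F)$ via (\ref{TFembeddingeq}). Conjugation by $h(l,m)$ sends the $\GL_2$-block to $\mat{a'}{\varpi^{-m}b'}{\varpi^m c'}{d'}$, so in (ii) the condition $h(l,m)^{-1}th(l,m)\in P_1$ reduces to $g\in\GL_2(\OF)$ together with $b'\in\p^m$; the $s_2s_1s_2$-translate gives the same condition, while the $s_2$-translate tightens it to $b'\in\p^{m+1}$. Under $T(F)\cong L^\times$ and the standing assumption $\mathbf{c}\in\OF^\times$, the subgroup $(1+\p^m\OF_L)\cap\OF_L^\times$ lies in the locus $b'\in\p^m$, so the definition (\ref{m0defeq}) directly produces $t$ with $\Lambda(t)\neq1$ whenever $m<m_0$ (respectively $m+1<m_0$). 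For (iii), the extra factor $\hat u_i$ is handled by the calculation
$$
 g_i^{-1}gg_i=\mat{x+y(\mathbf{b}/2+u_i\mathbf{c})}{y\mathbf{c}}{-y(\mathbf{c}u_i^2+\mathbf{b}u_i+\mathbf{a})}{x-y(\mathbf{b}/2+u_i\mathbf{c})}
$$
for $g=x+y\xi$ and $g_i=\mat{1}{}{u_i}{1}$, whose $(2,1)$-entry lies in $\p$ by (\ref{u1u2defeq}); since $h(l,0)^{-1}th(l,0)=t$, one checks entrywise that $(s_1s_2)^{-1}\hat u_i^{-1}t\hat u_i(s_1s_2)\in P_1$ for every $t\in T(\OF)$, and the hypothesis $m_0>0$ then supplies $t$ with $\Lambda(t)\neq1$.

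For (iv) and (v) I return to $r=u\in U(F)$ and conjugate through the full $h(l,0)\hat u_is_1s_2$. The conjugate is a lower-unipotent whose entries come from $Y':=\varpi^{-l}g_i^{-1}X\,{}^tg_i^{-1}$; the substitution $t_1=x_1$, $t_2=x_2-u_ix_1$, $t_3=u_i^2x_1-2u_ix_2+x_3$ turns the $P_1$-condition into $t_1,t_2\in\p^l$, $t_3\in\p^{l+1}$ and converts the character value to
$$
 \theta(u)=\psi\bigl(\gamma_it_1+\alpha_it_2+\mathbf{c}t_3\bigr),\qquad\gamma_i=\mathbf{c}u_i^2+\mathbf{b}u_i+\mathbf{a}\in\p,\;\alpha_i=\mathbf{b}+2u_i\mathbf{c}.
$$
The decisive step is the valuation of $\alpha_i$: in the ramified case $u_0\equiv-\mathbf{b}/(2\mathbf{c})\pmod\p$ forces $\alpha_0\in\p$, so only $\mathbf{c}t_3$ can contribute a non-trivial character value, demanding $l+1\leq-1$, i.e.\ $l<-1$; in the split case $\alpha_i\equiv\pm\sqrt{\mathbf{d}}\pmod\p$ is a unit, so $\psi(\alpha_it_2)\neq1$ can be arranged for $t_2\in\p^l$ as soon as $l<0$. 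I anticipate the main obstacle will be correctly tracking these Weyl conjugations through $s_1$ and $s_2$ and verifying that each entry of the conjugate matrices matches the asymmetric $P_1$-congruence pattern; the underlying principle is uniform, but each of the finitely many cosets requires its own entrywise bookkeeping.
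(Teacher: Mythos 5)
Your proposal is correct and is precisely the ``standard argument'' the paper invokes without writing out: for each coset representative $g$ you produce $r\in R(F)$ (unipotent for the $l$-conditions, toral for the $m_0$-conditions) with $g^{-1}rg\in P_1$ and $(\Lambda\otimes\theta)(r)\neq1$, and your entrywise bookkeeping (in particular the identity $\alpha_i^2=\mathbf{d}+4\mathbf{c}\gamma_i$ governing the valuation of $\alpha_i$ in the ramified versus split cases) checks out. No changes needed.
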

\begin{proof}
These are standard arguments using the $P_1$-invariance of $B$ and the definition of $m_0$. 
\end{proof}

\section{The Hecke operators}\label{T10sec}
In the following lemmas we will give the value of $T_{1,0}B$ and  $T_{0,1}B$, where $B\in\mathcal{S}(\Lambda,\theta,P_1)$, on various double coset representatives from Proposition \ref{HFRFSipdecomplemma}. The main tools are the integration formulas in Lemma \ref{GL2integrationlemma1} and the useful matrix identity (\ref{usefulidentityeq}). The proofs are long and tedious and we will not present them here. See \cite{Pitale-Schmidt-preprint-2012} for details.

\begin{lemma}\label{T10actionlemma}
  Let $B\in\mathcal{S}(\Lambda,\theta,P_1)$, and let $l$ and $m$ be non-negative integers. Let $h(l,m)$ be as in (\ref{hlmdefeq}). Then the following formulas hold.
 \begin{enumerate}
  \item
   $$
    (T_{1,0}B)(h(l,m))=q^3B(h(l+1,m)).
   $$
  \item
   \begin{align*}
    &(T_{1,0}B)(h(l,m)s_2)=q^2(q-1)B(h(l+1,m))\\[1ex]
    &+\begin{cases}
     -qB(h(l-1,m+1)s_1s_2)&\text{if }m<m_0,\\[2ex]
     \displaystyle\renewcommand{\arraystretch}{1.0}q\Lambda(\varpi)B(h(l+1,m-1)s_2)+q(q-1)B(h(l-1,m+1)s_1s_2)\;\;\;\text{if }m\geq {\rm max}(m_0,1),\hspace{-28ex}\\[2ex]
     \displaystyle\renewcommand{\arraystretch}{1.0}q^2B(h(l-1,1)s_1s_2)&\text{if }m=m_0=0,\;\big(\frac L\p\big)=-1,\\[2ex]
     \displaystyle\renewcommand{\arraystretch}{1.0}q\Lambda(\varpi_L)B(h(l,0)\hat u_0s_1s_2)+q(q-1)B(h(l-1,1)s_1s_2)&\text{if }m=m_0=0,\;\big(\frac L\p\big)=0,\\[2ex]
     \displaystyle\renewcommand{\arraystretch}{1.0}q\big(\Lambda(\varpi,1)B(h(l,0)\hat u_2s_1s_2)+\Lambda(1,\varpi)B(h(l,0)\hat u_1s_1s_2)\big)\\
     \qquad+q(q-2)B(h(l-1,1)s_1s_2)&\text{if }m=m_0=0,\;\big(\frac L\p\big)=1.
    \end{cases}
   \end{align*}
  \item
   \begin{align*}
    &(T_{1,0}B)(h(l,m)s_1s_2) = q^2(q-1)B(h(l+1,m))\\
    &+\begin{cases}
     -q\Lambda(\varpi)B(h(l+1,m-1)s_2)&\text{if }m<m_0,\\[2ex]
      \displaystyle\renewcommand{\arraystretch}{1.0}q^2B(h(l-1,m+1)s_1s_2)\hspace{-2ex}&\text{if }m\geq {\rm max}(m_0,1),\\[2ex]
      \displaystyle\renewcommand{\arraystretch}{1.0}q(q+1)B(h(l-1,1)s_1s_2)-q\Lambda(\varpi)B(h(l+1,-1)s_2)&\text{if }m=m_0=0,\;\big(\frac L\p\big)=-1,\\[2ex]
      \displaystyle\renewcommand{\arraystretch}{1.0}q\Lambda(\varpi_L)B(h(l,0)\hat u_0s_1s_2)+q^2B(h(l-1,1)s_1s_2)\\[1ex]
      \qquad-q\Lambda(\varpi)B(h(l+1,-1)s_2)&\text{if }m=m_0=0,\;\big(\frac L\p\big)=0,\\[3ex]
      \displaystyle\renewcommand{\arraystretch}{1.0}q\Lambda(1,\varpi)B(h(l,0)\hat u_1s_1s_2)+q\Lambda(\varpi,1)B(h(l,0)\hat u_2s_1s_2)+q(q-1)B(h(l-1,1)s_1s_2)\hspace{-27ex}\\[2ex]
      \qquad-q\Lambda(\varpi)B(h(l+1,-1)s_2)&\text{if }m=m_0=0,\;\big(\frac L\p\big)=1.
     \end{cases}
    \end{align*}
  \item
   \begin{align*}
    &(T_{1,0}B)(h(l,m)s_2s_1s_2)=q^2(q-1)B(h(l+1,m))+\Lambda(\varpi)B(h(l-1,m)s_2s_1s_2)\\
    &+\begin{cases}
     0&\text{if }m<m_0,\\[1ex]
     \displaystyle\renewcommand{\arraystretch}{1.0}q(q-1)B(h(l-1,m+1)s_1s_2)+(q-1)\Lambda(\varpi)B(h(l+1,m-1)s_2)\hspace{-12ex}\\
       &\text{if }m\geq {\rm max}(m_0,1),\\[3ex]
     \displaystyle\renewcommand{\arraystretch}{1.0}(q^2-1)B(h(l-1,1)s_1s_2)&\text{if }m=m_0=0,\;\big(\frac L\p\big)=-1,\\[3ex]
     \displaystyle\renewcommand{\arraystretch}{1.0}(q-1)\Lambda(\varpi_L)B(h(l,0)\hat u_0s_1s_2)+q(q-1)B(h(l-1,1)s_1s_2)\hspace{-30ex}\\
      &\text{if }m=m_0=0,\;\big(\frac L\p\big)=0,\\[3ex]
     \displaystyle\renewcommand{\arraystretch}{1.0}(q-1)\big(\Lambda(1,\varpi)B(h(l,0)\hat u_1s_1s_2)+\Lambda(\varpi,1)B(h(l,0)\hat u_2s_1s_2)\big)\hspace{-20ex}\\
      \qquad+(q-1)^2B(h(l-1,1)s_1s_2)&\text{if }m=m_0=0,\;\big(\frac L\p\big)=1.
    \end{cases}
   \end{align*}
 \end{enumerate}
\end{lemma}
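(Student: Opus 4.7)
The plan is to compute each value $(T_{1,0}B)(h(l,m)w)$, for $w \in \{1, s_2, s_1 s_2, s_2 s_1 s_2\}$, by fixing explicit right coset representatives for $P_1 h(1,0) P_1 / P_1$, so that $(T_{1,0}B)(g)$ becomes a finite sum $\sum_i B(g g_i)$. For each product $g \cdot g_i$, I will rewrite it as $r \cdot h(l',m') \cdot x \cdot p$ with $r \in R(F)$, $p \in P_1$, and $x$ one of the representatives appearing in Proposition \ref{HFRFSipdecomplemma}; the factor from $r$ is then extracted via the Bessel transformation property (\ref{Besseltransformationpropertyeq}), and the $p$-factor disappears by right $P_1$-invariance of $B$. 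Part (i) is the cleanest case: for $g = h(l,m)$ no Weyl twist is introduced by the $g_i$, so all contributions collapse into a single term of the form $q^3 B(h(l+1,m))$.

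For parts (ii)--(iv), the Weyl-twisted element $h(l,m)w$ forces lower unipotent blocks $\mat{1}{}{z}{1}$ with $z \in F^\times$ to appear in the products $g \cdot g_i$, and these do not lie in $P_1$. The matrix identity (\ref{usefulidentityeq}) is essential here: it rewrites $\mat{1}{}{z}{1}$ in upper-triangular form times the Weyl element $s_2 s_1 s_2$, which is precisely the mechanism that moves terms between the $s_2$, $s_1 s_2$, and $s_2 s_1 s_2$ families on the right-hand sides of (ii)--(iv). After this rewriting, the ``middle $\GL_2(\OF)$-block'' of each product can be handled by the decompositions in Lemma \ref{TOGL2OGammaplemma}, producing a torus factor which, combined with $\Lambda$, packages the whole sum as an application of Lemma \ref{GL2integrationlemma1}. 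The four-case structure of that lemma is what produces the four-case answers in the statement: the cases split on $m$ versus $m_0$, and at $m = m_0 = 0$ on the Legendre symbol $\big(\frac L \p\big)$.

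The main obstacle is the bookkeeping in the branches $m = m_0 = 0$. In the split case $\big(\frac L \p\big) = 1$ one must carefully track which representative $\hat u_1$ or $\hat u_2$ a given unipotent $\mat{1}{}{u}{1}$ reduces to modulo $\Gamma^0(\p)$, and match this to the correct value $\Lambda(1,\varpi)$ or $\Lambda(\varpi,1)$; in the ramified case a single $\hat u_0$-term appears, weighted by $\Lambda(\varpi_L)$; in the inert case no $\hat u$-terms appear. Case (iv) additionally produces the ``diagonal'' term $\Lambda(\varpi) B(h(l-1,m) s_2 s_1 s_2)$ coming from a $g_i$ that can be absorbed into the torus on the left of $h(l,m)$ without any middle $\GL_2(\OF)$-integration, which accounts for the extra summand in part (iv) compared with (ii) and (iii). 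These are the ``long and tedious'' computations referenced in the paper; the complete case-by-case verification is carried out in \cite{Pitale-Schmidt-preprint-2012}.
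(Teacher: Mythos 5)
Your proposal is correct and follows essentially the same route as the paper, which itself only indicates the method for this lemma: decompose $P_1h(1,0)P_1$ into right cosets, use the identity (\ref{usefulidentityeq}) to convert lower unipotent factors into Weyl-twisted upper-triangular ones, and invoke Lemma \ref{TOGL2OGammaplemma}/Lemma \ref{GL2integrationlemma1} to handle the middle $\GL_2(\OF)$-block, with the four-case structure inherited from that integration formula. The paper likewise defers the case-by-case bookkeeping to \cite{Pitale-Schmidt-preprint-2012}, so your sketch matches its level of detail and its tools.
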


\begin{lemma}\label{T10u0u1u2lemma}
 Let $\Lambda$ be an unramified character of $L^\times$, and let $l\geq-1$ be an integer. Then
 \begin{enumerate}
  \item In the ramified case $\big(\frac L\p\big)=0$, for all integers $l\geq-1$,
   $$
    (T_{1,0}B)(h(l,0)\hat u_0s_1s_2)=\begin{cases}
     -q^2B(1)&\text{if }l=-1,\\
     q^2(q-1)B(h(l+1,0))+q^2B(h(l-1,1)s_1s_2)&\text{if }l\geq0.
                                  \end{cases}
   $$
  \item In the split case $\big(\frac L\p\big)=1$, for all integers $l\geq0$,
   \begin{align*}
    (T_{1,0}B)(h(l,0)\hat u_1s_1s_2)&=q^2(q-1)B(h(l+1,0))+q(q-1)B(h(l-1,1)s_1s_2)\\
    &\hspace{30ex}+q\Lambda(1,\varpi)B(h(l,0)\hat u_1s_1s_2),\\
    (T_{1,0}B)(h(l,0)\hat u_2s_1s_2)&=q^2(q-1)B(h(l+1,0))+q(q-1)B(h(l-1,1)s_1s_2)\\
    &\hspace{30ex}+q\Lambda(\varpi,1)B(h(l,0)\hat u_2s_1s_2).
   \end{align*}
 \end{enumerate}
\end{lemma}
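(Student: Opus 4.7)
The proof will follow the same pattern as the proof of Lemma \ref{T10actionlemma}. We expand
$$
(T_{1,0}B)(h(l,0)\hat u_i s_1 s_2) = \sum_j B(h(l,0)\hat u_i s_1 s_2\, \gamma_j),
$$
where $\{\gamma_j\}$ is a set of right coset representatives for $P_1 h(1,0) P_1 / P_1$, and reduce each summand to a value of $B$ on one of the standard double coset representatives from Proposition \ref{HFRFSipdecomplemma}. The hypothesis that $\Lambda$ is unramified gives $m_0 = 0$, so both Lemma \ref{automaticvanishinglemma} and Lemma \ref{GL2integrationlemma1} apply in their simplest branches.

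First I would fix explicit representatives $\gamma_j$ and, for each $j$, bring $h(l,0)\hat u_i s_1 s_2\, \gamma_j$ into the form $r \cdot y \cdot p$ with $r = tu \in T(F)U(F) = R(F)$, $p \in P_1$, and $y$ one of $h(l',m')$, $h(l',m')w$ for $w \in \{s_2, s_1 s_2, s_2 s_1 s_2\}$, or $h(l',0)\hat u_k s_1 s_2$. The key matrix manipulation for extracting the $R(F)$-factor is the useful identity (\ref{usefulidentityeq}). Once this is done, the Bessel transformation property rewrites each summand as $\Lambda(t)\theta(u)\,B(y)$, and many of the resulting $B(y)$ vanish automatically by Lemma \ref{automaticvanishinglemma}. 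In the ramified case with $l=-1$, this vanishing is so severe that only a single contribution survives, giving the clean formula $-q^2 B(1)$; for $l\geq 0$, the remaining terms assemble into the two displayed terms.

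In the split case, two of the contributing cosets, when brought to standard form, yield $B(h(l,0)\hat u_i s_1 s_2)$ itself, multiplied by $\Lambda(1,\varpi)$ or $\Lambda(\varpi,1)$ according to which of $\hat u_1, \hat u_2$ enters; this is the origin of the self-referential term $q\Lambda(\cdot)\,B(h(l,0)\hat u_i s_1 s_2)$ in the formula. The main obstacle is purely the bookkeeping: enumerating all $\gamma_j$, performing each matrix reduction, tracking the torus and unipotent parts of $r$, and keeping $\Lambda$ and $\theta$ straight through the computation. There is no conceptual difficulty, but the calculation is long and tedious, in keeping with the authors' remark that proofs of this type have been omitted and can be found in \cite{Pitale-Schmidt-preprint-2012}.
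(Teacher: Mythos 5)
Your proposal follows exactly the method the paper uses (and defers to the preprint for): expand $T_{1,0}$ over right coset representatives of $P_1h(1,0)P_1/P_1$, reduce each term to a standard representative via the identity (\ref{usefulidentityeq}) and the Bessel transformation property, and invoke the automatic vanishing of Lemma \ref{automaticvanishinglemma}, with the self-referential $\Lambda(1,\varpi)$, $\Lambda(\varpi,1)$ terms arising as you describe. This is the same approach as the paper's; the remaining work is the bookkeeping you acknowledge.
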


\begin{lemma}\label{T01actionlemma}
 Let $B\in\mathcal{S}(\Lambda,\theta,P_1)$, and let $l$ and $m$ be non-negative integers. Let $h(l,m)$ be as in (\ref{hlmdefeq}). Then the following formulas hold.
 \begin{enumerate}
  \item
   $$
    (T_{0,1}B)(h(l,m))=\begin{cases}
     0&\text{if }m<m_0,\\[1ex]
     \displaystyle\renewcommand{\arraystretch}{1.0}q^3\Lambda(\varpi)B(h(l+2,m-1))+q^4B(h(l,m+1))\hspace{-9ex}\\[1ex]
     &\text{if }m\geq {\rm max}(m_0,1),\hspace{-30ex}\\[2ex]
     \displaystyle\renewcommand{\arraystretch}{1.0}q^3(q+1)B(h(l,1))&\text{if }m=m_0=0,\:\big(\frac L\p\big)=-1,\\[2ex]
     \displaystyle\renewcommand{\arraystretch}{1.0}q^3\Lambda(\varpi_L)B(h(l+1,0))+q^4B(h(l,1))\hspace{-1.4ex}&\text{if }m=m_0=0,\:\big(\frac L\p\big)=0,\\[2ex]
     \displaystyle\renewcommand{\arraystretch}{1.0}q^3(\Lambda(\varpi,1)+\Lambda(1,\varpi))B(h(l+1,0))+q^3(q-1)B(h(l,1))\hspace{-28ex}\\[2ex]
     &\text{if }m=m_0=0,\:\big(\frac L\p\big)=1.
    \end{cases}
   $$
  \item
   \begin{align*}
    &(T_{0,1}B)(h(l,m)s_2)\\
    &=\begin{cases}
     -q^3B(h(l,m+1)s_1s_2)-\Lambda(\varpi)B(h(l-2,m+1)s_1s_2)&\text{if }m<m_0,\\[2ex]
     \displaystyle\renewcommand{\arraystretch}{1.0}q^3\Lambda(\varpi)B(h(l+2,m-1)s_2)+\Lambda(\varpi)^2B(h(l,m-1)s_2)\hspace{-30ex}\\
     \qquad+(q-1)\Lambda(\varpi)B(h(l-2,m+1)s_1s_2)+q^3(q-1)B(h(l,m+1)s_1s_2)\hspace{-30ex}\\
     \qquad&\text{if }m\geq {\rm max}(m_0,1),\\[2ex]
     \displaystyle\renewcommand{\arraystretch}{1.0}q^4B(h(l,1)s_1s_2)+q\Lambda(\varpi)B(h(l-2,1)s_1s_2)&\text{if }m=m_0=0,\;\big(\frac L\p\big)=-1,\\[2ex]
     \displaystyle\renewcommand{\arraystretch}{1.0}q^3\Lambda(\varpi_L)B(h(l+1,0)\hat u_0s_1s_2)+\Lambda(\varpi)\Lambda(\varpi_L)B(h(l-1,0)\hat u_0s_1s_2)\hspace{-30ex}\\
      \qquad+(q-1)\Lambda(\varpi)B(h(l-2,1)s_1s_2)+q^3(q-1)B(h(l,1)s_1s_2)\hspace{-30ex}\\
      \qquad&\text{if }m=m_0=0,\;\big(\frac L\p\big)=0,\\[2ex]
     \displaystyle\renewcommand{\arraystretch}{1.0}q^3(\Lambda(\varpi,1)B(h(l+1,0)\hat u_2s_1s_2)+\Lambda(1,\varpi)B(h(l+1,0)\hat u_1s_1s_2))\hspace{-40ex}\\
      \qquad+\Lambda(\varpi)\big(\Lambda(\varpi,1)B(h(l-1,0)\hat u_2s_1s_2)+\Lambda(1,\varpi)B(h(l-1,0)\hat u_1s_1s_2)\big)\hspace{-40ex}\\
      \qquad+q^3(q-2)B(h(l,1)s_1s_2)+(q-2)\Lambda(\varpi)B(h(l-2,1)s_1s_2)\hspace{-50ex}\\
      &\text{if }m=m_0=0,\;\big(\frac L\p\big)=1.
    \end{cases}\\[2ex]
    &\qquad+q^2(q-1)B(h(l,m+1))+\left\{\begin{array}{ll}
     \displaystyle\Big(\frac L\p\Big)q\Lambda(\varpi)B(1),&\text{if }l=m=0,\\[2ex]
     0&\text{if }l=0,\:m\geq1,\\[2ex]
     q(q-1)\Lambda(\varpi)B(h(l,m))&\text{if }l\geq1.\end{array}\right\}.
   \end{align*}
 \end{enumerate}
\end{lemma}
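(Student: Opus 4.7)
The strategy mirrors that of Lemma \ref{T10actionlemma}. I would first decompose the double coset $P_1 h(0,1)P_1$ as a disjoint union of left cosets $P_1 h(0,1)P_1 = \bigsqcup_j g_j P_1$, so that by right $P_1$-invariance of $B$
$$
(T_{0,1}B)(x) = \sum_j B(xg_j).
$$
A standard Iwahori factorization calculation gives representatives $g_j$ as products of block-unipotent matrices (upper- and lower-triangular) times the diagonal matrix $h(0,1)$. The two types of block-unipotent contributions correspond to the two types of $y_j$ appearing in the right-hand side (namely, powers of $h(\cdot,\cdot)$ on the one hand and $h(\cdot,\cdot)s_2$, $h(\cdot,\cdot)s_1s_2$, $h(\cdot,\cdot)\hat u_is_1s_2$ on the other).

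For each $x\in\{h(l,m),\,h(l,m)s_2\}$ in the statement, I would multiply out $xg_j$ and use the useful identity (\ref{usefulidentityeq}) together with Bruhat-type moves inside the Siegel Levi to rewrite
$$
xg_j = r_j \cdot y_j \cdot k_j, \qquad r_j \in R(F),\; k_j \in P_1,
$$
where $y_j$ is one of the standard double coset representatives from Proposition \ref{HFRFSipdecomplemma}. The Bessel transformation property (\ref{Besseltransformationpropertyeq}) then yields $B(xg_j) = (\Lambda \otimes \theta)(r_j) B(y_j)$, reducing the Hecke action to a finite sum of character values times $B$ evaluated at double coset representatives.

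Once the sum $\sum_j (\Lambda\otimes\theta)(r_j) B(y_j)$ is written out, the contributions naturally parameterized by $\GL_2(\OF)$-type data can be packaged as an integral of a function on $\GL_2(\OF)$ satisfying the torus-equivariance hypothesis of Lemma \ref{GL2integrationlemma1}, with the integer $m$ in that lemma being the second argument of $h(l,m)$ shifted by $\pm1$ (depending on whether the coset contribution moves us toward $h(l',m+1)$ or toward $h(l',m-1)$). Applying Lemma \ref{GL2integrationlemma1} in the regimes $m<m_0$, $m\geq\max(m_0,1)$, and $m=m_0=0$ (the last further subdivided by the value of $\big(\frac{L}{\p}\big)$) reproduces the five branches of each piecewise formula.

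The main obstacle is bookkeeping rather than conceptual difficulty: one must carefully track the $\theta$-contributions from the unipotent pieces (which depend on $l$ and $m$), keep straight the value of $\Lambda$ on the various torus embeddings of $\varpi$, $\varpi_L$, $(\varpi,1)$, and $(1,\varpi)$, and correctly identify the boundary contributions that lie outside the $\GL_2(\OF)$-integration framework. In particular, the extra term $\big(\frac{L}{\p}\big)q\Lambda(\varpi)B(1)$ at $l=m=0$ in part (ii) arises from a degenerate coset in the $m_0=0$ case that survives Lemma \ref{automaticvanishinglemma}, while the terms $q(q-1)\Lambda(\varpi)B(h(l,m))$ at $l\geq1$ come from cosets whose $xg_j$ expansion via (\ref{usefulidentityeq}) lands back at $h(l,m)$ with a nontrivial $\theta$-twist from the unipotent factor. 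As the paper indicates, the full calculation is long and tedious, so I would defer to \cite{Pitale-Schmidt-preprint-2012} for the complete bookkeeping and present only the resulting formulas.
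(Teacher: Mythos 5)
Your outline matches the paper's own (omitted) proof exactly: the paper states that the main tools are the coset decomposition of $P_1h(0,1)P_1$, the matrix identity (\ref{usefulidentityeq}), and the integration formula of Lemma \ref{GL2integrationlemma1}, applied in the regimes $m<m_0$, $m\geq\max(m_0,1)$, and $m=m_0=0$ split by $\big(\frac L\p\big)$, with the full bookkeeping deferred to \cite{Pitale-Schmidt-preprint-2012}. Your identification of the sources of the boundary terms (the $\big(\frac L\p\big)q\Lambda(\varpi)B(1)$ term at $l=m=0$ and the $q(q-1)\Lambda(\varpi)B(h(l,m))$ term for $l\geq1$) is consistent with the structure of the stated formulas, so this is essentially the same approach.
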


\begin{lemma}\label{T01u0u1u2lemma}
 Let $\Lambda$ be an unramified character of $L^\times$, and let $l\geq-1$ be an integer. Then
 \begin{enumerate}
  \item In the ramified case $\big(\frac L\p\big)=0$, for all integers $l\geq-1$,
   $$
    (T_{0,1}B)(h(l,0)\hat u_0s_1s_2)=\begin{cases}
     q^4B(h(-1,1)s_1s_2)-q^2\Lambda(\varpi_L)B(1)&\text{if }l=-1,\\
     q^4B(h(0,1)s_1s_2)\\
      \qquad+q^2(q-1)\Lambda(\varpi_L)B(h(1,0))-q\Lambda(\varpi)B(1)&\text{if }l=0,\\
     q^4B(h(l,1)s_1s_2)+q\Lambda(\varpi)B(h(l-2,1)s_1s_2)\\
      \qquad+q^2(q-1)\Lambda(\varpi_L)B(h(l+1,0))\\
      \qquad+\Lambda(\varpi)q(q-1)B(h(l,0)))&\text{if }l\geq1.
                                  \end{cases}
   $$
  \item In the split case $\big(\frac L\p\big)=1$, for all integers $l\geq0$,
   $$
    (T_{0,1}B)(h(l,0)\hat u_1s_1s_2)=\begin{cases}
     q^3\Lambda(1,\varpi)B(h(1,0)\hat u_1s_1s_2)+q^3(q-1)B(h(0,1)s_1s_2)\\
      \qquad+q^2(q-1)\Lambda(\varpi,1)B(h(1,0))&\text{if }l=0,\\
     q^3\Lambda(1,\varpi)B(h(l+1,0)\hat u_1s_1s_2)+q^3(q-1)B(h(l,1)s_1s_2)\hspace{-3ex}\\
      \qquad+q^2(q-1)\Lambda(\varpi,1)B(h(l+1,0))\\
      \qquad+\Lambda(\varpi)q(q-1)B(h(l,0))\\
      \qquad+\Lambda(\varpi)\Lambda(1,\varpi)B(h(l-1,0)\hat u_1s_1s_2)\\
      \qquad+(q-1)\Lambda(\varpi)B(h(l-2,1)s_1s_2)&\text{if }l\geq1.
    \end{cases}
   $$
   $$
    (T_{0,1}B)(h(l,0)\hat u_2s_1s_2)=\begin{cases}
     q^3\Lambda(\varpi,1)B(h(1,0)\hat u_2s_1s_2)+q^3(q-1)B(h(0,1)s_1s_2)\\
      \qquad+q^2(q-1)\Lambda(1,\varpi)B(h(1,0))&\text{if }l=0,\\
     q^3\Lambda(\varpi,1)B(h(l+1,0)\hat u_2s_1s_2)+q^3(q-1)B(h(l,1)s_1s_2)\hspace{-3ex}\\
      \qquad+q^2(q-1)\Lambda(1,\varpi)B(h(l+1,0))\\
      \qquad+\Lambda(\varpi)q(q-1)B(h(l,0))\\
      \qquad+\Lambda(\varpi)\Lambda(\varpi,1)B(h(l-1,0)\hat u_2s_1s_2)\\
      \qquad+(q-1)\Lambda(\varpi)B(h(l-2,1)s_1s_2)&\text{if }l\geq1.
    \end{cases}
   $$
 \end{enumerate}
\end{lemma}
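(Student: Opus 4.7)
The plan is to follow the same template that established Lemma \ref{T01actionlemma}, specialized to the base points $x = h(l,0)\hat u_i s_1 s_2$ instead of the earlier $x = h(l,m)$ or $h(l,m) s_2$. Writing the double coset $P_1 h(0,1) P_1 = \bigsqcup_k g_k P_1$ as a finite disjoint union of left cosets under $P_1$, the definition (\ref{T01defeq}) gives
\[
 (T_{0,1}B)(x) \;=\; \sum_k B(x g_k)
\]
for any $B \in \mathcal{S}(\Lambda,\theta,P_1)$. The whole calculation then consists of evaluating each of these terms when $x = h(l,0)\hat u_i s_1 s_2$, using the fact that $\Lambda$ is unramified (so $m_0 = 0$).

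For each representative $g_k$, the evaluation proceeds in three sub-steps. First, I would perform an Iwasawa-style reduction on the product $x g_k$ to put it in the form $r \cdot y \cdot p$ with $r \in R(F)$, $p \in P_1$, and $y$ one of the canonical representatives listed in Proposition \ref{HFRFSipdecomplemma}; the transformation property (\ref{Besseltransformationpropertyeq}) then converts $B(xg_k)$ into $(\Lambda\otimes\theta)(r)\,B(y)$. The matrix identity (\ref{usefulidentityeq}) is the main device for absorbing the lower-triangular parts of $\hat u_i$ and of $g_k$ into $P_1$. Second, apply Lemma \ref{automaticvanishinglemma} to discard all representatives whose resulting $y$ is forced to lie outside the support of $B$. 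Third, consolidate the remaining contributions via the integration formula of Lemma \ref{GL2integrationlemma1}, which in the unramified case collapses the many surviving cosets into the compact expressions appearing on the right-hand side, with shape controlled by $\big(\frac L\p\big)$.

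After these reductions the \emph{generic} value of $(T_{0,1}B)(h(l,0)\hat u_i s_1 s_2)$ has the shape advertised in the lemma: a self-coupled term $q^3\Lambda(\cdots)B(h(l+1,0)\hat u_i s_1 s_2)$, a term $q^3(q-1)B(h(l,1) s_1 s_2)$, two $B(h(\cdot,\cdot))$ terms (where $B(h(l+1,0))$ and $B(h(l,0))$ enter, multiplied respectively by a pair of $\Lambda$-values and by $\Lambda(\varpi)$), and the two ``shifted'' contributions $\Lambda(\varpi)\Lambda(\cdots)B(h(l-1,0)\hat u_i s_1 s_2)$ and $(q-1)\Lambda(\varpi)B(h(l-2,1)s_1s_2)$ coming from cosets that cross the long Weyl element. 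The small-$l$ values ($l=-1$ ramified, $l=0$ in both cases) require extra attention: for those $l$, several of the above arguments are killed by Lemma \ref{automaticvanishinglemma} (e.g.\ $B(h(l-2,1)s_1s_2)$ and $B(h(l-1,0)\hat u_j s_1s_2)$ drop out when $l$ is too small), and a handful of degenerate applications of (\ref{usefulidentityeq}), in which the parameter $z$ happens to lie in $\OF^\times$ rather than in $\p$, produce an extra contribution at the identity; that is precisely the origin of the $-q^2\Lambda(\varpi_L)B(1)$ summand in the ramified $l=-1$ formula and of the $-q\Lambda(\varpi)B(1)$ summand at $l=0$.

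The main obstacle is the sheer combinatorial size of the enumeration in the first step: the double coset $P_1 h(0,1) P_1$ has a large index, and after multiplying by the awkward element $\hat u_i s_1 s_2$ on the left and re-Iwasawizing, each of its representatives can potentially land in any of the five classes of Proposition \ref{HFRFSipdecomplemma}. Keeping track of which $g_k$ feeds which target class, absorbing the lower-unipotent data through (\ref{usefulidentityeq}) in each case, and then summing the accompanying $\Lambda$- and $\theta$-twists without dropping signs or powers of $q$, is exactly the source of the lengthy bookkeeping for which the authors refer to \cite{Pitale-Schmidt-preprint-2012}. No genuinely new idea beyond those already used for Lemmas \ref{T10actionlemma}, \ref{T10u0u1u2lemma} and \ref{T01actionlemma} should be required.
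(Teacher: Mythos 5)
Your proposal follows exactly the route the authors indicate for all of the Hecke-operator lemmas in Sect.~\ref{T10sec}: decompose $P_1h(0,1)P_1$ into left cosets so that $(T_{0,1}B)(x)=\sum_k B(xg_k)$, reduce each $xg_k$ to a canonical representative of Proposition~\ref{HFRFSipdecomplemma} via the identity (\ref{usefulidentityeq}) and the Bessel transformation property, discard terms by Lemma~\ref{automaticvanishinglemma}, and consolidate with Lemma~\ref{GL2integrationlemma1}; the paper itself omits the resulting bookkeeping and defers to \cite{Pitale-Schmidt-preprint-2012}. Your outline is correct and is essentially the same approach, though like the paper it stops short of verifying the individual coefficients.
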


\begin{lemma}\label{T01s2conslemma}
 Let $B\in\mathcal{S}(\Lambda,\theta,P_1)$ be such that $T_{1,0}B=\lambda B$ and $T_{0,1}B=\mu B$. Let $l$ and $m$ be non-negative integers. 
 \begin{itemize}
 \item For $m\geq{\rm max}(m_0,1)$, or for $m\geq m_0-1$ if $m_0>0$, we have
 \begin{align}\label{T01s2conslemmaeq1}
  &\lambda\Lambda(\varpi) B(h(l,m)s_2)-\mu qB(h(l+1,m)s_2)+\lambda q^3B(h(l+2,m)s_2)\nonumber\\
  &\qquad=q^5(q-1)B(h(l+3,m))-q^3(q-1)B(h(l+1,m+1))
 \end{align}
 
 \item For $m\geq{\rm max}(m_0,1)$, we have
 \begin{align}\label{T01s2conseq4}
 &\mu B(h(0,m)s_2)-q^2\lambda B(h(1,m)s_2)-\Lambda(\varpi)^2B(h(0,m-1)s_2)\nonumber\\
 &\qquad=q^2(q-1)B(h(0,m+1))-q^4(q-1)B(h(2,m))
\end{align}

\item For $m_0 > 0$, we have
\begin{align}\label{T01s2conseq4b}
 &\mu B(h(0,m_0)s_2)-q^2\lambda B(h(1,m_0)s_2)-\Lambda(\varpi)^2B(h(0,m_0-1)s_2)\nonumber\\
 &\qquad=q^{-2}(q-1)(\mu-\lambda^2)B(h(0,m_0))
\end{align}

\item For $l \geq 0, m_0 > 0$, we have
\begin{align}\label{T01s2conslemmaeq1b}
  &\lambda\Lambda(\varpi) B(h(l,m_0-1)s_2)-\mu qB(h(l+1,m_0-1)s_2)+\lambda q^3B(h(l+2,m_0-1)s_2)\nonumber\\
  &\qquad=-\lambda(q-1)B(h(l,m_0))
\end{align}
\end{itemize}
\end{lemma}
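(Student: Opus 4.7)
The plan is to derive each of the four identities by forming an appropriate linear combination of the eigenvalue equations $(T_{1,0}B)(g)=\lambda B(g)$ and $(T_{0,1}B)(g)=\mu B(g)$ at a small set of shifts of $h(l,m)s_2$, using the explicit formulas of Lemmas \ref{T10actionlemma} and \ref{T01actionlemma}. Each such formula expresses $(T_{i,j}B)(h(l,m)s_2)$ as a linear combination of values of $B$ on four kinds of cosets: $h(l',m')$, $h(l',m')s_2$, $h(l',m')s_1s_2$, and---only in the exotic case $m=m_0=0$---on $h(l',0)\hat u_i s_1s_2$. The point is that with the specific coefficients $(\lambda\Lambda(\varpi),\,-\mu q,\,\lambda q^3)$ multiplying three well-chosen equations, all the $s_1s_2$- and lower-$m$ $s_2$-contributions cancel in pairs, leaving exactly the diagonal values appearing on the right-hand side. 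Since the stated $m$-ranges avoid $m=m_0=0$, the inert/ramified/split subcases never intervene and the calculation is uniform in the quadratic extension.

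For (\ref{T01s2conslemmaeq1}) in the range $m\geq\max(m_0,1)$, I would write out the $T_{1,0}$-equation at $h(l,m)s_2$ and at $h(l+2,m)s_2$ (from the $m\geq\max(m_0,1)$ branch of Lemma \ref{T10actionlemma}(ii)), and the $T_{0,1}$-equation at $h(l+1,m)s_2$ (from the corresponding branch of Lemma \ref{T01actionlemma}(ii)), multiply them by $\Lambda(\varpi)$, $q^3$, and $-q$ respectively, and add. A direct comparison of coefficients shows that the contributions to $B(h(l+1,m-1)s_2)$, $B(h(l+3,m-1)s_2)$, $B(h(l-1,m+1)s_1s_2)$, $B(h(l+1,m+1)s_1s_2)$, and $B(h(l+1,m))$ all cancel, leaving the stated right-hand side. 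In the remaining range $m=m_0-1$ (which occurs only when $m_0>0$), the identical linear combination works with the $m<m_0$ branches of both lemmas; since $B(h(l+3,m_0-1))=0$ by Lemma \ref{automaticvanishinglemma}(ii), (\ref{T01s2conslemmaeq1}) also holds there, and rewriting $q^3B(h(l+1,m_0))=\lambda B(h(l,m_0))$ via Lemma \ref{T10actionlemma}(i) produces the cleaner form (\ref{T01s2conslemmaeq1b}).

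For (\ref{T01s2conseq4}) one forms $\mu\cdot(T_{0,1}\text{-equation at }h(0,m)s_2)-q^2\lambda\cdot(T_{1,0}\text{-equation at }h(1,m)s_2)$, both in the branch $m\geq\max(m_0,1)$. The term $B(h(-2,m+1)s_1s_2)$ appearing in Lemma \ref{T01actionlemma}(ii) is automatically zero by Lemma \ref{automaticvanishinglemma}(i), and the last summand of Lemma \ref{T01actionlemma}(ii) vanishes in the $l=0,\,m\geq1$ case. After the terms $B(h(2,m-1)s_2)$ and $B(h(0,m+1)s_1s_2)$ cancel, one reads off (\ref{T01s2conseq4}). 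Finally, (\ref{T01s2conseq4b}) follows by specializing (\ref{T01s2conseq4}) at $m=m_0$ and rewriting its right-hand side: Lemma \ref{T01actionlemma}(i) at $h(0,m_0)$, combined with $B(h(2,m_0-1))=0$ from Lemma \ref{automaticvanishinglemma}(ii), gives $B(h(0,m_0+1))=\mu q^{-4}B(h(0,m_0))$, while two applications of Lemma \ref{T10actionlemma}(i) give $B(h(2,m_0))=q^{-6}\lambda^2B(h(0,m_0))$; substituting yields exactly $q^{-2}(q-1)(\mu-\lambda^2)B(h(0,m_0))$.

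The main obstacle is the bookkeeping of coefficients across the several piecewise branches of Lemmas \ref{T10actionlemma} and \ref{T01actionlemma}: the nontrivial observation is that the single choice of coefficients $(\lambda\Lambda(\varpi),-\mu q,\lambda q^3)$ simultaneously effects all five required cancellations in the first bullet, and an analogous phenomenon with $(\mu,-q^2\lambda)$ for the second bullet. Once this has been noticed the verification is purely mechanical, and the hypothesis that the stated $m$-ranges avoid the $m=m_0=0$ situation is precisely what removes the need to treat the inert, ramified, and split subcases separately. The boundary identities (\ref{T01s2conseq4b}) and (\ref{T01s2conslemmaeq1b}) are then straightforward substitutions using the tower formulas contained in Lemmas \ref{T10actionlemma}(i) and \ref{T01actionlemma}(i) together with the automatic vanishing of Lemma \ref{automaticvanishinglemma}.
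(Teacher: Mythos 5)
Your proposal is correct and follows essentially the same route as the paper, which likewise obtains (\ref{T01s2conslemmaeq1}) and (\ref{T01s2conseq4}) by combining Lemma \ref{T10actionlemma}~ii) with Lemma \ref{T01actionlemma}~ii), then derives (\ref{T01s2conseq4b}) from (\ref{T01s2conseq4}) together with the part~i) tower formulas, and (\ref{T01s2conslemmaeq1b}) from (\ref{T01s2conslemmaeq1}), the identity $\lambda B(h(l,m_0))=q^3B(h(l+1,m_0))$, and automatic vanishing. The cancellation bookkeeping you carry out (including the vanishing of $B(h(-2,m+1)s_1s_2)$ and $B(h(l+3,m_0-1))$) checks out.
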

\begin{proof}
(\ref{T01s2conslemmaeq1}) and (\ref{T01s2conseq4}) follow from Lemma \ref{T01actionlemma} ii) and Lemma \ref{T10actionlemma} ii). (\ref{T01s2conseq4b}) follows from (\ref{T01s2conseq4}), Lemma \ref{T10actionlemma} i) and Lemma \ref{T01actionlemma} i). (\ref{T01s2conslemmaeq1b}) follows from (\ref{T01s2conslemmaeq1}), the identity $\lambda B(h(l,m_0))=q^3B(h(l+1,m_0))$ and some automatic vanishing.
\end{proof}

\section{The main tower}\label{maintowersec}
Again we consider the matrix $S$ and the associated character $\theta$ of $U(F)$; see (\ref{thetadefeq}). As usual, the assumptions (\ref{standardassumptions}) are in force. Let $\Lambda$ be a character of $T(F)$ and define the non-negative integer $m_0$ as in (\ref{m0defeq}). Let $B$ be a function in the space $\mathcal{S}(\Lambda,\theta,P_1)$. We refer to the values of $B$ at the elements $h(l,m)$, defined in (\ref{hlmdefeq}), as the \emph{main tower} (in view of Proposition \ref{HFRFSipdecomplemma}, there is also an $s_2$-tower etc). Note that $B(h(l,m))=0$ for $l<0$ or $0\leq m<m_0$ by Lemma \ref{automaticvanishinglemma}. The following result relates the values of $B(h(l,m))$ to $B(h(0,m_0))$, only assuming that $B$ is an eigenfunction for $T_{1,0}$ and $T_{0,1}$. In particular, it shows that the entire main tower is zero if $B(h(0,m_0))=0$.
\begin{proposition}\label{maintowerprop}
 Let $B\in\mathcal{S}(\Lambda,\theta,P_1)$ be an eigenfunction for $T_{1,0}$ and $T_{0,1}$ with eigenvalues $\lambda$ and $\mu$, respectively. Then
 \begin{equation}\label{main-tower-gen-l+1tol}
  B(h(l+1,m)) =\lambda q^{-3}B(h(l,m))\qquad\text{for all $l\geq0$ and $m\geq0$}.
 \end{equation}
 Furthermore, for any $l\geq0$, there is a formal identity
 $$
  Y^{-m_0}\sum_{m=m_0}^\infty B(h(l,m))Y^m =\frac{1-\kappa q^{-4}Y}{1-\mu q^{-4} Y+\lambda^2q^{-7}\Lambda(\varpi)Y^2}B(h(l,m_0)),
 $$
 where
 $$
  \kappa=\begin{cases}
          0&\text{if }m_0>0,\\[1ex]
          (q+1)^{-1}\mu&\text{if }m_0=0\text{ and }\big(\frac L\p\big)=-1,\\[1ex]
          \Lambda(\varpi_L)\lambda&\text{if }m_0=0\text{ and }\big(\frac L\p\big)=0,\\[1ex]
          (q-1)^{-1}(q\lambda(\Lambda(\varpi, 1)+\Lambda(1,\varpi))-\mu)&\text{if }m_0=0\text{ and }\big(\frac L\p\big)=1.
         \end{cases}
 $$
\end{proposition}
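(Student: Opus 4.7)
The first identity is immediate. By hypothesis $T_{1,0}B=\lambda B$, and Lemma \ref{T10actionlemma} i) reads $(T_{1,0}B)(h(l,m))=q^3B(h(l+1,m))$. Equating these two expressions gives $\lambda B(h(l,m))=q^3B(h(l+1,m))$ for all $l,m\geq0$, which is (\ref{main-tower-gen-l+1tol}).

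For the generating series, fix $l\geq0$ and abbreviate $f_m=B(h(l,m))$. My plan is to extract a three-term linear recurrence for $\{f_m\}_{m\geq m_0}$ from $T_{0,1}B=\mu B$ and Lemma \ref{T01actionlemma} i), and then read off the generating function. The key simplification is that the $B(h(l+2,m-1))$ term appearing on the right-hand side of Lemma \ref{T01actionlemma} i) can be replaced, via two applications of (\ref{main-tower-gen-l+1tol}), by $\lambda^2q^{-6}f_{m-1}$. Thus for $m\geq\max(m_0,1)$ the eigenvalue equation becomes
$$
\mu f_m=q^3\Lambda(\varpi)\lambda^2q^{-6}f_{m-1}+q^4f_{m+1},
$$
equivalently
$$
f_{m+1}-\mu q^{-4}f_m+\lambda^2q^{-7}\Lambda(\varpi)f_{m-1}=0.
$$
This is exactly the recursion whose characteristic polynomial is the denominator in the claim.

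It remains to pin down the initial data. If $m_0\geq1$, then by Lemma \ref{automaticvanishinglemma} ii) we have $f_{m_0-1}=0$, so applying the above recursion at $m=m_0$ collapses to $f_{m_0+1}=\mu q^{-4}f_{m_0}$; this will match $\kappa=0$ in the numerator. If $m_0=0$, the recursion at $m=0$ is replaced by the $m=m_0=0$ branch of Lemma \ref{T01actionlemma} i), which has three sub-cases depending on $\big(\frac{L}{\p}\big)$. In each sub-case, one solves the resulting linear equation for $f_1$ in terms of $f_0$ (using (\ref{main-tower-gen-l+1tol}) again to rewrite the $B(h(l+1,0))$ term that appears in the ramified and split cases), and then computes $f_1-\mu q^{-4}f_0$. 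A short calculation in each of the three cases produces exactly $-\kappa q^{-4}f_0$ for the tabulated values of $\kappa$.

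To assemble the generating series, let $G(Y)=Y^{-m_0}\sum_{m\geq m_0}f_mY^m$ and multiply by $D(Y):=1-\mu q^{-4}Y+\lambda^2q^{-7}\Lambda(\varpi)Y^2$. The recursion kills every coefficient of $Y^n$ for $n\geq2$, the constant term is $f_{m_0}$, and the coefficient of $Y$ equals $f_{m_0+1}-\mu q^{-4}f_{m_0}=-\kappa q^{-4}f_{m_0}$ by the previous paragraph. Hence $D(Y)G(Y)=(1-\kappa q^{-4}Y)f_{m_0}$, which is the stated formal identity. The only delicate step is the case-by-case initial-condition computation when $m_0=0$, especially the split case, where one must carefully combine Lemma \ref{T01actionlemma} i) with (\ref{main-tower-gen-l+1tol}) and simplify the resulting expression for $\kappa$; everything else is a direct transcription of the Hecke eigenvalue relations into a formal power series identity.
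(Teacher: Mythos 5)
Your proposal is correct and follows essentially the same route as the paper: relation (\ref{main-tower-gen-l+1tol}) from Lemma \ref{T10actionlemma} i), the two-step recursion obtained by substituting (\ref{main-tower-gen-l+1tol}) into Lemma \ref{T01actionlemma} i), and the determination of the initial coefficient (equivalently, the relation between $B(h(l,m_0))$ and $B(h(l,m_0+1))$) from the $m=m_0$ case of that lemma, with the three $m_0=0$ sub-cases yielding the tabulated $\kappa$. The only cosmetic difference is that you multiply the generating function by the denominator polynomial rather than summing the recursion against $Y^{m+2}$; the computations of $\kappa$ you outline check out in all three cases.
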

\begin{proof}
The relation (\ref{main-tower-gen-l+1tol}) is immediate from Lemma \ref{T10actionlemma} i). Combining (\ref{main-tower-gen-l+1tol}) with Lemma \ref{T01actionlemma} i), we get, for $l\geq0$ and $m\geq m_0$,
\begin{equation}\label{general-2-step-recursion}
 q^4B(h(l,m+2)) - \mu B(h(l,m+1)) +\lambda^2q^{-3}\Lambda(\varpi)B(h(l,m)) = 0.
\end{equation}
We multiply this by $Y^{m+2}$ and apply $\sum_{m=m_0}^\infty$ to both sides, arriving at the formal identity
$$
 \sum_{m=m_0}^\infty B(h(l,m))Y^m=\frac{(q^4-\mu Y)B(h(l,m_0))+q^4YB(h(l,m_0+1))}{q^4-\mu Y+\lambda^2q^{-3}\Lambda(\varpi)Y^2}Y^{m_0}.
$$
Setting $m=m_0$ in Lemma \ref{T01actionlemma} i) and using (\ref{main-tower-gen-l+1tol}) provides a relation between $B(h(l,m_0))$ and $B(h(l,m_0+1))$. Substituting this relation, we obtain the asserted formula.
\end{proof}
\section{Generic representations and split Bessel models}\label{genericsplitsec}
We recall some basic facts about generic representations of $\GSp_4(F)$, and refer to Sect.\ 2.6 of \cite{NF} for details. We denote by $N$ the unipotent radical of the Borel subgroup $B$, and for $c_1,c_2$ in $F^\times$, consider the character $\psi_{c_1,c_2}$ of $N(F)$ given in Sect.\,2.1 of \cite{NF}.
An irreducible, admissible representation $\pi$ of $\GSp_4(F)$ is called \emph{generic} if ${\rm Hom}_{N(F)}(\pi,\psi_{c_1,c_2})\neq0$. In this case there is an associated Whittaker model $\mathcal{W}(\pi,\psi_{c_1,c_2})$ consisting of functions $\GSp_4(F)\rightarrow\C$ that transform on the left according to $\psi_{c_1,c_2}$. For $W\in\mathcal{W}(\pi,\psi_{c_1,c_2})$, there is an associated zeta integral
\begin{equation}\label{ZsWdefeq}
 Z(s,W)=\int\limits_{F^\times}\int\limits_F
 W(\begin{bmatrix}a\\&a\\x&&1\\&&&1\end{bmatrix})|a|^{s-3/2}\,dx\,d^\times a.
\end{equation}
This integral is convergent for ${\rm Re}(s)>s_0$, where $s_0$ is independent
of $W$; see \cite{NF}, Proposition 2.6.3. 
Moreover, there exists an $L$-factor of the form
$
 L(s,\pi)=Q(q^{-s})^{-1},Q(X)\in\C[X],\;Q(0)=1,
$
such that
\begin{equation}\label{zetaLquotienteq}
 \frac{Z(s,W)}{L(s,\pi)}\in\C[q^{-s},q^s]\qquad\text{for all }
 W\in \mathcal{W}(\pi,\psi_{c_1,c_2}).
\end{equation}
We consider split Bessel models with respect to the quadratic form $S$ defined in (\ref{splitstandardSeq}).
Let $\theta$ be the corresponding character of $U(F)$. The resulting group $T(F)$, defined in (\ref{TFdefeq}), is a split torus. We think of $T(F)$ embedded into $\GSp_4(F)$ as all matrices of the form ${\rm diag}(a,b,b,a)$ with $a,b\in F^\times$. We write a character $\Lambda$ of $T(F)$ as a function $\Lambda(a,b)$. Consider the functional $f_s$ on the $\psi_{c_1,c_2}$-Whittaker model of $\pi$ given by
\begin{equation}\label{fs-defn}
 f_s(W)=\frac{Z(s,\pi(s_2)(W))}{L(s,\pi)}.
\end{equation}
By analytic continuation and the defining property of $L(s,\pi)$, this is a well-defined and non-zero functional on $\pi$ for \emph{any} value of $s$. A direct computation shows that the functional $f_s$ is a split $(\Lambda,\theta)$-Bessel functional with respect to the character $\Lambda$ given by
$
 \Lambda({\rm diag}(a,b,b,a))=|a^{-1}b|^{-s+1/2}.
$
If we assume that $\pi$ has trivial central character, then any unramified character of $T(F)$ is of this form for an appropriate $s$.
\subsection*{Zeta integrals of Siegel vectors}
Let $(\pi,V)$ be an irreducible, admissible, generic representation of $\GSp_4(F)$ with unramified central character. We assume that $V=\mathcal{W}(\pi,\psi_{c_1,c_2})$ is the Whittaker model with respect to the character $\psi_{c_1,c_2}$ of $N(F)$. For what follows we will assume that $c_1,c_2\in\OF^\times$. Recall the Siegel congruence subgroup $P_1$ and the Klingen congruence subgroup $P_2$ defined in (\ref{parahoricsubgroupsdefeq}). 

\begin{lemma}\label{simplifiedzetalemma}
 Let $(\pi,V)$ be as above, and let $W$ be an element of $V=\mathcal{W}(\pi,\psi_{c_1,c_2})$.
 \begin{enumerate}
  \item If $W$ is $P_1$-invariant, then
   $$
    Z(s,\pi(s_2)W)=\int\limits_{F^\times}W(\begin{bmatrix}a\\&a\\&&1\\&&&1\end{bmatrix}s_2)|a|^{s-3/2}\,d^\times a.
   $$
  \item If $W$ is $P_2$-invariant, then
   $$
    Z(s,W)=\int\limits_{F^\times}W(\begin{bmatrix}a\\&a\\&&1\\&&&1\end{bmatrix})|a|^{s-3/2}\,d^\times a.
   $$
 \end{enumerate}
\end{lemma}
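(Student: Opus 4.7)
The plan for both parts is to show that the inner $x$-integral in (\ref{ZsWdefeq}) collapses to its $x\in\OF$ contribution.

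For part (ii), set $d(a):=\text{diag}(a,a,1,1)$ and $g_x:=\begin{bmatrix}1&&&\\&1&&\\x&&1&\\&&&1\end{bmatrix}$. For $x\in\OF$ one has $g_x\in P_2$ directly (the only nontrivial off-diagonal entry is at $(3,1)$, where $P_2$ allows $\OF$), so $W(d(a)g_x)=W(d(a))$; with the normalization $\int_{\OF}dx=1$ this yields exactly the asserted right-hand side. For $v(x)<0$, I apply (\ref{usefulidentityeq}) inside the symplectic $\SL_2\subset\GSp_4$ acting on the span of $e_1,e_3$, producing the decomposition $g_x=u(x)\,t(x)\,s_2\,u(x)$ with $u(x):=I+x^{-1}E_{13}$ and $t(x):=\text{diag}(-x^{-1},1,-x,1)$. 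Since $v(x^{-1})>0$ we have $u(x)\in P_2$, and a direct entry-check against the pattern matrix in (\ref{parahoricsubgroupsdefeq}) shows $s_2\in P_2$ as well. Right $P_2$-invariance therefore absorbs the rightmost $u(x)$ and the $s_2$; pushing the leftmost $u(x)$ through $d(a)$ produces the element $I+ax^{-1}E_{13}\in N(F)$, whose $(1,2)$- and $(2,3)$-entries both vanish, so its Whittaker character is trivial. The net result is $W(d(a)g_x)=W(\text{diag}(-a/x,a,-x,1))$.

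Part (i) runs in parallel with $\pi(s_2)W$ in place of $W$ and $P_1$ in place of $P_2$. For $x\in\OF$ I plan to use $s_2^{-1}g_xs_2=I-xE_{13}\in P_1$ to extract the main term $W(d(a)s_2)$. For $v(x)<0$, the same decomposition via (\ref{usefulidentityeq}), combined with the commutation $u(x)\,s_2=s_2\,(I-x^{-1}E_{13})$ (the last factor lying in $P_1$ since $x^{-1}\in\p$), the absorption of $s_2^{2}=\text{diag}(-1,1,-1,1)\in P_1$, and pushing $u(x)$ through $d(a)$ again reduces the integrand to $W(\text{diag}(-a/x,a,-x,1))$.

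Both parts thus come down to showing that $\int_{v(x)<0}W(\text{diag}(-a/x,a,-x,1))\,dx=0$. The plan here is: first, right invariance under $\text{diag}(u,1,u^{-1},1)$ for $u\in\OF^\times$ (which lies in both $P_1$ and $P_2$) makes the integrand constant in the unit parameter on each annulus $\varpi^{-n}\OF^\times$, reducing the vanishing to $W(\text{diag}(-a\varpi^n,a,-\varpi^{-n},1))=0$ for each $n\geq1$; second, a Jacquet-style support argument achieves this via left-translation by the Borel unipotent $I+b(E_{14}+E_{23})\in N(F)$, whose nontrivial character $\psi(c_2b)$ (valid for $v(b)<0$) combined with the conjugation condition needed to land in $P_2$ (resp.\ $P_1$) forces the Whittaker value to vanish. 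The expected main obstacle is closing this support-vanishing argument in the full range of $v(x)<0$: the long-root character trick alone covers the regime $|a|>|x|$, and the remaining regime must be handled by combining it with the short-root unipotent $I+\alpha(E_{12}-E_{34})$ (which is more delicate because $P_2$ imposes $(1,2)\in\p$) or by finer asymptotic control of $W|_T$. This is precisely the step for which the paper refers to \cite{Pitale-Schmidt-preprint-2012}.
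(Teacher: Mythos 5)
Your reduction of both parts to the vanishing of the $v(x)<0$ range of the inner integral, via the identity (\ref{usefulidentityeq}) in the long-root $\SL_2$ and the absorption of $s_2u(x)$ into $P_2$ (resp.\ the conjugation $s_2^{-1}g_xs_2=I-xE_{13}$ into $P_1$), is the right skeleton and is essentially how the result cited by the paper ([NF], Lemma 4.1.1) is proved. But there is a genuine error at the decisive step: you have misplaced the character $\psi_{c_1,c_2}$. In the coordinates of this paper the Borel is determined by the Iwahori subgroup in (\ref{parahoricsubgroupsdefeq}), whose reduction mod $\p$ has its free entries at $(2,1)$, $(1,3)$, $(1,4)$, $(2,3)$, $(2,4)$, $(3,4)$; hence the two \emph{simple} root subgroups of $N$ are $\{I+u(E_{21}-E_{34})\}$ and $\{I+yE_{13}\}$, and the character of [NF], Sect.\,2.1 becomes $\psi_{c_1,c_2}(n)=\psi(c_1n_{21}+c_2n_{13})$ --- not a character on the $(1,2)$- and $(2,3)$-entries. (This is forced by consistency: the unipotent $I+xE_{31}$ in (\ref{ZsWdefeq}) is the opposite of the long \emph{simple} root, which is why $s_2$ is the relevant Weyl element.) Two things in your argument break as a result. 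First, the element $I+ax^{-1}E_{13}$ you push through $d(a)$ does \emph{not} have trivial character; it contributes $\psi(c_2ax^{-1})$, so your intermediate formula $W(d(a)g_x)=W(\mathrm{diag}(-a/x,a,-x,1))$ is missing a factor (and the integrand is not constant on the annuli $\varpi^{-n}\OF^\times$). Second, your proposed support element $I+b(E_{14}+E_{23})$ belongs to the non-simple root $e_1+e_2$, on which $\psi_{c_1,c_2}$ is trivial, so it yields no information at all; this is why your argument cannot close and why you are forced to leave the ``remaining regime'' open.

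Once the character is placed correctly, the step you could not finish is a one-line support argument with no case distinction: the subgroup $\{I+u(E_{21}-E_{34}):u\in\OF\}$ lies in both $P_1$ and $P_2$ (the $(2,1)$- and $(3,4)$-entries are $\OF$-entries in both pattern matrices), and conjugating it across $t=\mathrm{diag}(-ax^{-1},a,-x,1)$ gives $tn_ut^{-1}=I+u\,t_2t_1^{-1}(E_{21}-E_{34})$ with $t_2t_1^{-1}=-x$. Hence $W(t)=\psi(-c_1ux)W(t)$ for all $u\in\OF$, and since $v(x)<0$ and $c_1\in\OF^\times$ the character is nontrivial for suitable $u$, so $W(t)=0$ for every $a$ and every $x$ with $v(x)<0$. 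The extra factor $\psi(c_2ax^{-1})$ is then irrelevant, and both parts follow. Note also that the paper's own proof of this lemma is a citation of [NF], Lemma 4.1.1 (with i) ``proved similarly''), not a deferral to the preprint, so the missing step is not one the authors left to \cite{Pitale-Schmidt-preprint-2012}.
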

\begin{proof}
ii) is a special case of Lemma 4.1.1 of \cite{NF}, and i) is proved similarly. 
\end{proof}

Let $W\in V$ be a $P_2$-invariant vector. Then
$$
 T_{\rm Si}W:=\sum_{g\in\GL_2(\OF)/\Gamma^0(\p)}\pi(\mat{g}{}{}{^tg^{-1}})W
$$
is $P_1$-invariant. Using Lemma \ref{simplifiedzetalemma}, the standard representatives for $\GL_2(\OF)/\Gamma^0(\p)$ and (\ref{usefulidentityeq}), one calculates that
\begin{equation}\label{zetaintegralformulaeq}
 Z(s,\pi(s_2)(T_{\rm Si}W))=q\int\limits_{F^\times}
 W(\begin{bmatrix}a\\&a\\&&1\\&&&1\end{bmatrix}s_2s_1)|a|^{s-3/2}\,d^\times a+Z(s,W).
\end{equation}
\subsection*{The IIa case}

\begin{lemma}\label{IIazetalemma}
 Let $\pi=\chi\St_{\GL(2)}\rtimes\sigma$ be a representation of type IIa with unramified $\chi$ and $\sigma$. Let $\alpha=\chi(\varpi)$ and $\gamma=\sigma(\varpi)$. We assume that $\alpha^2\gamma^2=1$, i.e., that $\pi$ has trivial central character. Let $W_0$ be a non-zero $\para(\p)$-invariant vector in the $\psi_{c_1,c_2}$-Whittaker model of $\pi$ normalzed such that $Z(s,W_0)=L(s,\pi)$. Let $\omega=-\alpha\gamma$ be the eigenvalue of the Atkin-Lehner element $\eta$ on $W_0$.
 \begin{enumerate}
  \item 
  For any $s\in\C$,
   $$
    Z(s,\pi(s_2)(T_{\rm Si}W_0))=(\omega q^{s-1/2}+1)L(s,\pi).
   $$
\item  If $\omega q^{s-1/2}+1=0$, then
   $$
    Z(s,T_{\rm Si}W_0)=(1-q^{-1})^{-1}.
   $$
 \end{enumerate}

\end{lemma}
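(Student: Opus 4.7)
For part (i), the plan is to start from formula (\ref{zetaintegralformulaeq}). The second summand $Z(s,W_0)$ equals $L(s,\pi)$ by the normalization of $W_0$. For the first summand, direct matrix computation yields the identity ${\rm diag}(a,a,1,1)\,s_2s_1 = s_2s_1\cdot{\rm diag}(a,1,1,a)$, so we must evaluate $W_0$ on $s_2s_1\cdot{\rm diag}(a,1,1,a)$. Writing $a=\varpi^n u$ with $u\in\OF^\times$ and using $\eta^2=\varpi I_4$ (which acts trivially on $W_0$ because the central character is trivial), together with the decomposition $\eta=s_2s_1s_2\cdot{\rm diag}(\varpi,-\varpi,1,-1)$, one extracts an Atkin-Lehner factor to obtain an equation of the form
$$W_0\bigl({\rm diag}(a,a,1,1)\,s_2s_1\bigr) = \omega\,W_0\bigl({\rm diag}(\varpi a,\varpi a,1,1)\bigr),$$
after absorbing the resulting paramodular factor into the invariance of $W_0$. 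Substituting this into the integral and changing variables $a\mapsto a/\varpi$ produces an extra factor of $q^{s-3/2}$, which combined with the leading $q$ gives exactly $\omega q^{s-1/2}L(s,\pi)$.

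For part (ii), the plan is to directly evaluate $Z(s,T_{\rm Si}W_0)$ by expanding the Siegelization:
$$T_{\rm Si}W_0 = \sum_{k\in\GL_2(\OF)/\Gamma^0(\p)}\pi\bigl({\rm diag}(k,\det(k)\,^t k^{-1})\bigr)W_0,$$
a sum of $q+1$ terms. For each coset representative $k$ from the standard list in Lemma \ref{TOGL2OGammaplemma}, the zeta integral is computed by applying (\ref{usefulidentityeq}) to the $x$-integral in (\ref{ZsWdefeq}) and using the $\para(\p)$-invariance and Atkin-Lehner eigenvalue of $W_0$, in the same spirit as part (i). The result decomposes into a piece proportional to $(\omega q^{s-1/2}+1)L(s,\pi)$, which vanishes by hypothesis, plus a residual constant contribution $(1-q^{-1})^{-1}$ coming from the absolutely convergent integration over $\OF^\times\subset F^\times$.

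The main obstacle in both parts is the matrix bookkeeping needed to verify that each product $s_2s_1\cdot{\rm diag}(a,1,1,a)$ in part (i), and, for each coset representative $k$ in part (ii), the corresponding product involving the lower-left unipotent from (\ref{ZsWdefeq}), factors after clearing the central element $\eta^2=\varpi$ into a Whittaker-unipotent part with trivial $\psi_{c_1,c_2}$-character (using $c_1,c_2\in\OF^\times$), a standard diagonal element, an Atkin-Lehner copy, and an element of $\para(\p)$. Once these decompositions are secured, the remaining integrations collapse to geometric series that are straightforward to sum.
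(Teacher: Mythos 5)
Your part (i) is essentially the paper's own argument: the paper also starts from (\ref{zetaintegralformulaeq}), notes $Z(s,W_0)=L(s,\pi)$, and reduces everything to the single relation $W_0(g\,s_2s_1)=\omega\,W_0(g\,h(1,0))$, which it obtains from the fact that $h(1,0)^{-1}s_1s_2s_1h(1,0)$ lies in $\para(\p)$ together with the Atkin--Lehner eigenvalue and $\omega^2=1$. Your route --- commuting ${\rm diag}(a,a,1,1)$ past $s_2s_1$ and extracting a copy of $\eta$ --- produces exactly this identity, and the change of variables $a\mapsto a\varpi^{-1}$ contributing $q\cdot q^{s-3/2}=q^{s-1/2}$ matches the paper's computation. (You are implicitly using that $P_2\subset\para(\p)$ so that Lemma \ref{simplifiedzetalemma} ii) turns the shifted integral back into $Z(s,W_0)$; that is fine.)

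Part (ii) is where the proposal has a genuine gap. The whole content of that assertion is the specific nonzero value $(1-q^{-1})^{-1}$, and your argument does not derive it: you state that after expanding $T_{\rm Si}W_0$ over the $q+1$ cosets of $\GL_2(\OF)/\Gamma^0(\p)$ the result ``decomposes into a piece proportional to $(\omega q^{s-1/2}+1)L(s,\pi)$ \ldots plus a residual constant contribution $(1-q^{-1})^{-1}$,'' which presupposes both the structure and the value of the answer rather than establishing them. Nothing in the outline identifies which cosets produce the vanishing piece, why the remainder is independent of $s$, or why it equals $(1-q^{-1})^{-1}$ rather than some other constant --- possibly zero, which would destroy the intended application. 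Note that at the point $\omega q^{s-1/2}=-1$ the factor $L(s,\pi)$ does not itself collapse to $(1-q^{-1})^{-1}$; one must actually compute the Laurent polynomial $Z(s,T_{\rm Si}W_0)/L(s,\pi)$ and evaluate the product at that point. The paper itself calls this ``a long computation'' and defers it to the longer preprint; your sketch is a reasonable plan for carrying out that computation, but as written it restates the conclusion instead of proving it.
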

\begin{proof}
We may assume that $s$ is in the region of convergence. Since the element $t_1 = h(1,0)^{-1}s_1s_2s_1h(1,0)$ lies in the paramodular group $\para(\p)$ we have, for any $g$ in $\GSp_4(F)$, the relation
$ \omega W_0(gs_2s_1)=W_0(gh(1,0)).$ Substituting this into (\ref{zetaintegralformulaeq}), we obtain
\begin{align*}
 Z(s,\pi(s_2)(T_{\rm Si}W_0))&=\omega q\int\limits_{F^\times}W_0(\begin{bmatrix}a\varpi\\&a\varpi\\&&1\\&&&1\end{bmatrix})|a|^{s-3/2}\,d^\times a+Z(s,W_0)\\
 &=(\omega q^{s-1/2}+1)L(s,\pi).
\end{align*}
This proves part i) of the lemma. Proof of part ii) is a long computation and we do not present it here. See \cite{Pitale-Schmidt-preprint-2012} for details.
\end{proof}

For the following proposition we continue to assume that $\pi=\chi\St_{\GL(2)}\rtimes\sigma$ is a representation of type IIa with unramified $\chi$ and $\sigma$, but we will drop the condition that $\pi$ has trivial central character. If the non-zero vector $W$ spans the space of $P_1$-invariant vectors, then we still have $\eta W=\omega W$ with $\omega=-\alpha\gamma$, but this constant is no longer necessarily $\pm1$. We consider split Bessel models with respect $S$ as in (\ref{splitstandardSeq}). We want the character $\Lambda$ of $T(F)$ to coincide on the center of $\GSp_4(F)$ with the central character of $\pi$, i.e., $\Lambda(a,a)=(\chi\sigma)^2(a)$. Since $\Lambda(\varpi,1)\Lambda(1,\varpi)=\omega^2$, we have $\Lambda(1,\varpi)=-\omega$ if and only if $\Lambda(\varpi,1)=-\omega$.

\begin{proposition}\label{IIasplitprop}
 Assume that $\pi=\chi\St_{\GL(2)}\rtimes\sigma$ is a representation of type IIa with unramified $\chi$ and $\sigma$. Let $S$, $\theta$ and $T(F)$ be as above. Let $\Lambda$ be an unramified character of $T(F)$ such that $\Lambda(a,a)=(\chi\sigma)^2(a)$. Let $B$ be a non-zero vector in the $(\Lambda,\theta)$-Bessel model of $\pi$ spanning the one-dimensional space of $P_1$-invariant vectors. Then we have
 $$\Lambda(\varpi,1)\neq-\omega\neq\Lambda(1,\varpi) \quad \Longleftrightarrow \quad B(1)\neq0.$$
\end{proposition}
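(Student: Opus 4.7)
The plan is to realize the (unique up to scalar) $P_1$-invariant vector in the split Bessel model of $\pi$ via the zeta-integral functional $f_s$ of (\ref{fs-defn}), applied to the Siegelized paramodular Whittaker newform, and then to read off $B(1)$ directly from Lemma \ref{IIazetalemma} i).

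Observe first that the central-character compatibility forces $\Lambda(\varpi,1)\Lambda(1,\varpi)=\Lambda(\varpi,\varpi)=\omega_\pi(\varpi)=\alpha^2\gamma^2=\omega^2$, so that $\Lambda(\varpi,1)=-\omega$ is equivalent to $\Lambda(1,\varpi)=\omega^2/(-\omega)=-\omega$. Hence the two conditions on $\Lambda$ appearing in the proposition are equivalent, and it suffices to prove $B(1)=0\iff\Lambda(1,\varpi)=-\omega$. To bring Lemma \ref{IIazetalemma} into play, which assumes trivial central character, I would twist $\pi$ by an unramified character of $F^\times$ composed with the multiplier homomorphism, chosen so that the twisted representation $\tilde\pi=\chi\St_{\GL(2)}\rtimes\tilde\sigma$ has trivial central character. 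Under this twist, $\omega$, $\Lambda(\varpi,1)$ and $\Lambda(1,\varpi)$ all scale by the same unramified factor while $B(1)$ is unchanged, so the truth of the statement is invariant; we may therefore assume $\omega_\pi=1$ and $\omega\in\{\pm1\}$.

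With this reduction, any unramified character $\Lambda$ of $T(F)$ trivial on the center takes the form $\Lambda({\rm diag}(a,b,b,a))=|a^{-1}b|^{-s+1/2}$ for some $s\in\C$, and in particular $\Lambda(1,\varpi)=q^{s-1/2}$. Let $W_0\in\mathcal{W}(\pi,\psi_{c_1,c_2})$ be the paramodular newform, normalized so that $Z(s,W_0)=L(s,\pi)$ and $\eta W_0=\omega W_0$; its Siegelization $T_{\rm Si}W_0$ spans the one-dimensional space of $P_1$-invariant Whittaker functions. Define
\[
 B(g):=f_s(\pi(g)(T_{\rm Si}W_0)).
\]
This is a $P_1$-invariant $(\Lambda,\theta)$-Bessel function, and it is non-zero as a function on $\GSp_4(F)$: if it vanished identically then $f_s$ would vanish on the $\pi$-span of $T_{\rm Si}W_0$, which equals $\mathcal{W}(\pi,\psi_{c_1,c_2})$ by irreducibility, contradicting the defining property (\ref{zetaLquotienteq}) of $L(s,\pi)$. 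Uniqueness of the Bessel model and one-dimensionality of its $P_1$-fixed subspace then identify $B$ with the Bessel function in the proposition, up to a non-zero scalar.

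Lemma \ref{IIazetalemma} i) now yields
\[
 B(1)=\frac{Z(s,\pi(s_2)(T_{\rm Si}W_0))}{L(s,\pi)}=\omega q^{s-1/2}+1,
\]
whence $B(1)=0\iff q^{s-1/2}=-\omega\iff\Lambda(1,\varpi)=-\omega$. Combined with the equivalence established in the second paragraph, this completes the chain of equivalences claimed by the proposition. The main subtlety is ensuring that the zeta-integral realization is a genuine non-zero element of the Bessel model even in the exceptional branch where $B(1)=0$; the argument above settles this via irreducibility of $\pi$, and Lemma \ref{IIazetalemma} ii) provides an independent verification, yielding $B(s_2^{-1})=(1-q^{-1})^{-1}/L(s,\pi)\neq0$ in precisely that branch.
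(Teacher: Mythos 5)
Your proposal is correct and follows essentially the same route as the paper: twist to trivial central character, realize the $P_1$-invariant Bessel vector via the functional $f_s$ applied to the Siegelized paramodular newform $T_{\rm Si}W_0$, and read off $B(1)$ from Lemma \ref{IIazetalemma} i) together with the identification $\Lambda(1,\varpi)=q^{s-1/2}$ and uniqueness of Bessel models. Your explicit irreducibility argument for the non-vanishing of $B$ in the exceptional branch is a slightly more detailed version of what the paper leaves to uniqueness, and is a welcome clarification.
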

\begin{proof}
After twisting by an unramified character, we may assume that $\pi$ has trivial central character. Let $W_0$ be a non-zero vector in the $\psi_{c_1,c_2}$-Whittaker model of $\pi$ spanning the one-dimensional space of $\para(\p)$-invariant vectors. By Lemma \ref{IIazetalemma}, the vector $T_{\rm Si}W_0$ is non-zero, and hence spans the one-dimensional space of $P_1$-invariant vectors. Let $f_s$ be as in (\ref{fs-defn}).
%
Note that
$
 \Lambda(\varpi,1)=-\omega=\Lambda(1,\varpi)$ if and only if $\omega q^{s-1/2}+1=0$. Then, by i) of Lemma \ref{IIazetalemma}, we have $f_s(T_{\rm Si}W_0)\neq0$ if and only if $\Lambda(\varpi,1)\neq-\omega\neq\Lambda(1,\varpi)$. The proposition now follows from the fact that $f_s(T_{\rm Si}W_0)\neq0$ is equivalent to $B(1)\neq0$ by uniqueness of Bessel models.
\end{proof}
\subsection*{The VIa case}
Let $\pi=\tau(S,\nu^{-1/2}\sigma)$ be the representation of type VIa. We assume that $\sigma$ is unramified and that $\pi$ has trivial central character, i.e., $\sigma^2=1$. This is a representation of conductor $2$
By Table A.8 of \cite{NF},
\begin{equation}\label{VIaLeq}
 L(s,\pi)=L(s,\nu^{1/2}\sigma)^2=\frac1{(1-\sigma(\varpi)q^{-1/2-s})^2}.
\end{equation}
We assume that $\pi$ is given in its $\psi_{c_1,c_2}$-Whittaker model. Let $W_0$ be the paramodular newform, i.e., a non-zero element invariant under the paramodular group of level $\p^2$; see (\ref{paradefeq}). By Theorem 7.5.4 of \cite{NF}, we can normalize $W_0$ such that
$
 Z(s,W_0)=L(s,\pi).
$
We let
\begin{equation}\label{shadowvectordefeq}
 W':=\sum_{x,y,z\in\OF/\p}\pi(\begin{bmatrix}1&y\varpi\\&1\\&x\varpi&1\\x\varpi&z\varpi&-y\varpi&1\end{bmatrix})W_0.
\end{equation}
This is a $P_2$-invariant vector; it was called the \emph{shadow of the newform} in Sect.\ 7.4 of \cite{NF}. By Proposition 7.4.8 of \cite{NF},
\begin{equation}\label{VIashadowzetaeq}
 Z(s,W')=(1-q^{-1})L(s,\pi).
\end{equation}

\begin{lemma}\label{VIazetalemma}
 For any complex number $s$,
 \begin{equation}\label{VIacalceq8}
  \frac{Z(s,\pi(s_2)(T_{\rm Si}W'))}{L(s,\pi)}=2(q-1)\sigma(\varpi)q^{-1/2+s}.
 \end{equation}
\end{lemma}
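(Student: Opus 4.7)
The plan is to reduce the computation to the identity (\ref{zetaintegralformulaeq}) applied with $W=W'$. This gives
$$Z(s,\pi(s_2)(T_{\rm Si}W'))=q\int\limits_{F^\times}W'(\begin{bmatrix}a\\&a\\&&1\\&&&1\end{bmatrix}s_2s_1)|a|^{s-3/2}\,d^\times a+Z(s,W'),$$
and by (\ref{VIashadowzetaeq}) the second summand equals $(1-q^{-1})L(s,\pi)$. Hence the task reduces to showing that $q$ times the integral above equals $\bigl[2(q-1)\sigma(\varpi)q^{s-1/2}-(1-q^{-1})\bigr]L(s,\pi)$.

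I would expand $W'$ using its definition (\ref{shadowvectordefeq}) and pull the finite sum over $x,y,z\in\OF/\p$ outside the integral. This produces $q^3$ pieces of the form
$$I_{x,y,z}(s)=\int\limits_{F^\times}W_0(\begin{bmatrix}a\\&a\\&&1\\&&&1\end{bmatrix}s_2s_1\begin{bmatrix}1&y\varpi\\&1\\&x\varpi&1\\x\varpi&z\varpi&-y\varpi&1\end{bmatrix})|a|^{s-3/2}\,d^\times a.$$
The strategy for each term is a Bruhat-style manipulation: write the matrix argument in the form $n\cdot{\rm diag}(a',a',1,1)\cdot w\cdot k$, with $n\in N(F)$, $w$ a Weyl element, $k\in\para(\p^2)$, and $a'$ an explicit scalar depending on $a$ and on $x,y,z$. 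The matrix identity (\ref{usefulidentityeq}) is invoked precisely when one of the coordinates $x,y,z$ is a unit and must be inverted in order to extract such a $k$. Invariance of $W_0$ under $\para(\p^2)$, the Whittaker transformation law, and---when $w$ is non-trivial---the paramodular Atkin-Lehner eigenvalue of $W_0$ then collapse each $I_{x,y,z}(s)$ to a scalar multiple of a shift of $Z(s,W_0)=L(s,\pi)$, evaluable in closed form using the normalization of $W_0$.

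A case analysis organized by whether each of $x,y,z$ lies in $\OF^\times$ or in $\p$ sorts the sum into several groups. The bulk of the terms should reassemble, via geometric series in the parameter $a$, to produce the corrective $-(1-q^{-1})L(s,\pi)$, while the exceptional configurations involving a non-trivial Weyl reflection contribute the $2(q-1)\sigma(\varpi)q^{s-1/2}L(s,\pi)$ demanded by (\ref{VIacalceq8}). The factor $\sigma(\varpi)$ enters precisely through the paramodular Atkin-Lehner eigenvalue of $W_0$ in those exceptional cases, and the factor $q^{s-1/2}$ through the shift in the $a$-variable produced by the Bruhat rewriting.

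The main obstacle is the bookkeeping in the matrix decompositions: the element ${\rm diag}(a,a,1,1)\,s_2s_1$ composed with the unipotent element from (\ref{shadowvectordefeq}) is not directly in any convenient Bruhat or Iwahori form, and each case split produces its own change of variables in $a$ that must be tracked carefully against the measure $|a|^{s-3/2}\,d^\times a$. These verifications are long but conceptually routine, so in the spirit of Lemma \ref{IIazetalemma}(ii) the full details would be relegated to \cite{Pitale-Schmidt-preprint-2012}.
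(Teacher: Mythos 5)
Your first reduction is exactly the paper's: apply (\ref{zetaintegralformulaeq}) with $W=W'$ and use (\ref{VIashadowzetaeq}), so that everything hinges on evaluating
$$
\int\limits_{F^\times}W'(\begin{bmatrix}a\\&a\\&&1\\&&&1\end{bmatrix}s_2s_1)\,|a|^{s-3/2}\,d^\times a .
$$
It is the evaluation of this integral where your proposal has a genuine gap. Expanding $W'$ via (\ref{shadowvectordefeq}) merely rewrites the integral as a sum of terms $I_{x,y,z}(s)$ whose integrands are values of $W_0$ at elements of the form ${\rm diag}(a,a,1,1)\,s_2s_1\,n_{x,y,z}$; the term $x=y=z=0$ already produces $W_0({\rm diag}(a,a,1,1)s_2s_1)$, which is an unknown of exactly the same nature as the integral you started with. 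The Weyl element $s_2s_1$ does not lie in $N(F)T(F)\para(\p^2)$, and no amount of right multiplication by $\para(\p^2)$ together with the Whittaker transformation law will move it into that cell; nor does the level-$\p^2$ Atkin--Lehner element (which involves $s_2s_1s_2$ conjugated by ${\rm diag}(\varpi^2,\dots)$) convert $g\,s_2s_1$ into $g$ times a diagonal element, the way $t_1=h(1,0)^{-1}s_1s_2s_1h(1,0)\in\para(\p)$ does in the level-one computation of Lemma \ref{IIazetalemma}. So the claim that each $I_{x,y,z}(s)$ ``collapses to a scalar multiple of a shift of $Z(s,W_0)=L(s,\pi)$'' using only $\para(\p^2)$-invariance, the Whittaker law, and the Atkin--Lehner eigenvalue is unjustified: the normalization $Z(s,W_0)=L(s,\pi)$ only pins down $W_0$ on the cell $N(F)T(F)\para(\p^2)$, not on the cells containing non-trivial Weyl parts.

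The paper closes this gap with two inputs you do not invoke: it computes $Z(s,\pi(s_2s_1)W')$ a second time via the \emph{local functional equation} (which relates these off-cell values back to known zeta integrals), and it uses the fact that $\pi$ has no non-zero $\para(\p)$-invariant vectors; equating the two expressions solves for the unknown integral and yields (\ref{VIacalceq5}). Without some substitute for the functional equation---or an independent determination of $W_0$ on the full Iwahori/paramodular coset decomposition---your case analysis cannot terminate in closed form, so the argument as proposed does not go through.
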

\begin{proof}
We may assume that $s$ is in the region of convergence. By (\ref{zetaintegralformulaeq}) and (\ref{VIashadowzetaeq}),
\begin{equation}\label{zetaintegralformulaeq2}
 Z(s,\pi(s_2)(T_{\rm Si}W'))=q\int\limits_{F^\times}
 W'(\begin{bmatrix}a\\&a\\&&1\\&&&1\end{bmatrix}s_2s_1)|a|^{s-3/2}\,d^\times a+(1-q^{-1})L(s,\pi).
\end{equation}
To evaluate the integral in this equation, we compute the zeta integral $Z(s,\pi(s_2s_1)W')$ in two different ways -- first, by using the local functional equation for $Z(s,W)$, and second, by using a direct computation and the fact that the representation has no $\para(\p)$-invariant vectors. The result is that
\begin{equation}\label{VIacalceq5}
 \int\limits_{F^\times}W'(\begin{bmatrix}a\\&a\\&&1\\&&&1\end{bmatrix}s_2s_1)|a|^{s-3/2}\,d^\times a=(q-1)q^{2s-1}(L(s,\pi)-1).
\end{equation}
From  (\ref{zetaintegralformulaeq2}) and (\ref{VIacalceq5}), we now get
\begin{equation}\label{VIacalceq6}
 Z(s,\pi(s_2)(T_{\rm Si}W'))=(q-1)q^{2s}(L(s,\pi)-1)+(1-q^{-1})L(s,\pi).
\end{equation}
Using the explicit form (\ref{VIaLeq}) of $L(s,\pi)$, the assertion follows. See \cite{Pitale-Schmidt-preprint-2012} for further details.
\end{proof}

%
\begin{proposition}\label{VIasplitprop}
 Assume that $\pi=\tau(S,\nu^{-1/2}\sigma)$ is a representation of type VIa with unramified $\sigma$. Let $S$ be as in (\ref{splitstandardSeq}). Let $\Lambda$ be an unramified character of $T(F)$ such that $\Lambda(a,a)=\sigma^2(a)$. Let $B$ be a non-zero vector in the $(\Lambda,\theta)$-Bessel model of $\pi$ spanning the one-dimensional space of $P_1$-invariant vectors. Then $B(1)\neq0$.
\end{proposition}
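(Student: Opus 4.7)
The plan is to imitate the argument used in Proposition \ref{IIasplitprop}, replacing the paramodular newform by the shadow vector $W'$ of (\ref{shadowvectordefeq}) and using the zeta-integral identity of Lemma \ref{VIazetalemma}. After twisting by an unramified character we may assume $\sigma^2=1$, so $\pi$ has trivial central character; any unramified character $\Lambda$ of the split torus with $\Lambda(a,a)=\sigma^2(a)=1$ is of the form $\Lambda({\rm diag}(a,b,b,a))=|a^{-1}b|^{-s+1/2}$ for a suitable $s\in\C$, so it suffices to work with the functional $f_s$ of (\ref{fs-defn}) for this $s$.

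First I would observe that, since $W'$ is $P_2$-invariant, the operator $T_{\rm Si}$ produces a $P_1$-invariant vector $T_{\rm Si}W'\in V$. By the very definition (\ref{fs-defn}) of $f_s$ and by Lemma \ref{VIazetalemma},
$$
 f_s(T_{\rm Si}W')=\frac{Z(s,\pi(s_2)(T_{\rm Si}W'))}{L(s,\pi)}=2(q-1)\sigma(\varpi)q^{-1/2+s}.
$$
The right-hand side is a nowhere-vanishing function of $s$, because $\sigma(\varpi)\in\{\pm1\}$ and $q^{-1/2+s}\neq 0$. In particular $T_{\rm Si}W'\neq 0$, and since by Table \ref{Iwahoritable} the space of $P_1$-fixed vectors in $\pi$ is one-dimensional, $T_{\rm Si}W'$ spans this space.

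Finally I would transfer the non-vanishing of $f_s$ to the Bessel side. A direct computation, as explained before (\ref{fs-defn}), shows that $f_s$ is a non-zero split $(\Lambda,\theta)$-Bessel functional, and by the uniqueness of Bessel models this functional agrees, up to a non-zero scalar, with the evaluation-at-identity functional on $\mathcal{B}_{\Lambda,\theta}(\pi)$ composed with the isomorphism sending $W\in V$ to its Bessel realization. Hence $f_s(T_{\rm Si}W')\neq 0$ translates into $B(1)\neq 0$ for any non-zero $P_1$-invariant $B$ in $\mathcal{B}_{\Lambda,\theta}(\pi)$. The main conceptual point is the permanent non-vanishing of the factor $2(q-1)\sigma(\varpi)q^{-1/2+s}$, which is in sharp contrast with the type IIa situation, where the corresponding factor $\omega q^{s-1/2}+1$ could vanish and produced the single exceptional case of Theorem \ref{onedimtheorem} i); here no exceptional case arises. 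The only slightly delicate step is the bookkeeping identification of $f_s(T_{\rm Si}W')$ with the constant $B(1)$ under the Whittaker-to-Bessel transfer, which I expect to handle exactly as in the proof of Proposition \ref{IIasplitprop}.
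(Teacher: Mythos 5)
Your argument is correct and follows essentially the same route as the paper's proof: twist to trivial central character, realize $\Lambda$ via a suitable $s$, observe that $T_{\rm Si}W'$ is non-zero and spans the $P_1$-fixed line by Lemma \ref{VIazetalemma}, and then transfer $f_s(T_{\rm Si}W')=2(q-1)\sigma(\varpi)q^{-1/2+s}\neq0$ to $B(1)\neq0$ via uniqueness of Bessel models. Your added remark that this factor never vanishes (unlike the IIa factor $\omega q^{s-1/2}+1$) correctly explains why no exceptional case occurs here.
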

\begin{proof}
After twisting by an unramified character, we may assume that $\pi$ has trivial central character. Let $W_0$ be a non-zero vector in the $\psi_{c_1,c_2}$-Whittaker model of $\pi$ spanning the one-dimensional space of $\para(\p^2)$-invariant vectors, and let $W'$ be the $P_2$-invariant vector defined in (\ref{shadowvectordefeq}). By Lemma \ref{VIazetalemma}, the vector $T_{\rm Si}W'$ is non-zero, and hence spans the one-dimensional space of $P_1$-invariant vectors. Let $f_s$ be as in (\ref{fs-defn}).
%
By Lemma \ref{VIazetalemma}, we have $f_s(T_{\rm Si}W')\neq0$. By uniqueness of Bessel models, this is equivalent to $B(1)\neq0$.
\end{proof}
\section{The one-dimensional cases}\label{onedimsec}
In this section we will identify good test vectors for those irreducible, admissible representations of $\GSp_4(F)$ which are not spherical, but possess a one-dimensional space of $P_1$-invariant vectors. A look at Table \ref{Iwahoritable} shows that these are the Iwahori-spherical representations of type IIa, IVc, Vb, VIa and VIb (Vc is a twist of Vb and is not counted separately). 
This entire section assumes that the elements $\mathbf{a},\mathbf{b},\mathbf{c}$ satisfy the conditions (\ref{standardassumptions}). The matrix $S$, the group $T(F)\cong L^\times$ and the character $\theta$ have the usual meaning. 

\begin{lemma}\label{onedimlemma}
 Let $\Lambda$ be a character of $L^\times$, and $m_0$ as in (\ref{m0defeq}). 
 \begin{enumerate}
 \item Assume that $B\in\mathcal{S}(\Lambda,\theta,P_1)$ satisfies
 $$
  T_{1,0}B=\lambda B,\qquad T_{0,1}B=\mu B,\qquad \eta B=\omega B
 $$
 with complex numbers $\lambda,\mu,\omega$ satisfying $\lambda = -q\omega$.
\begin{itemize}
 \item If $m_0=0$, $\big(\frac L\p\big)=0$ and $\Lambda(\varpi_L)=-\omega$, then $B=0$. 
 \item If $m_0=0$, $\big(\frac L\p\big)=1$, $\Lambda(1,\varpi)=-\omega=\Lambda(\varpi,1)$ and $B(1)=0$, then
$$
      B=0\qquad\Longleftrightarrow\qquad B(\hat u_1s_1s_2)=0\qquad\Longleftrightarrow\qquad B(\hat u_2s_1s_2)=0.
     $$
\item Otherwise, 
 $$
    B=0\qquad\Longleftrightarrow\qquad B(h(0,m_0))=0.
   $$
   \end{itemize}
\item Assume that $B\in\mathcal{S}(\Lambda,\theta,P_1)$ satisfies  $\eta B=\omega B$ and 
\begin{equation}\label{VIbspecialeq}
 \sum_{c\in\OF/\p}\pi(\begin{bmatrix}1\\&1\\c&&1\\&&&1\end{bmatrix})B+\pi(s_2)B=0.
\end{equation}
Then
$$
    B=0\qquad\Longleftrightarrow\qquad B(h(0,m_0))=0.
   $$
\end{enumerate}
%
\end{lemma}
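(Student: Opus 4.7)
My strategy for Part i) is to use Proposition \ref{HFRFSipdecomplemma}: every double coset in $R(F)\backslash\GSp_4(F)/P_1$ has a representative from the main tower $\{h(l,m)\}$, one of the $s_2$-, $s_1s_2$-, or $s_2s_1s_2$-towers, or (in the ramified/split case) a $\hat u_i s_1 s_2$-tower. I will show that under the stated hypotheses each tower is determined by $B(h(0,m_0))$, with additional initial data $B(\hat u_i s_1 s_2)$ required only in the exceptional split case. The reverse implication $B=0\Rightarrow B(h(0,m_0))=0$ is trivial. The main tower is handled immediately by Proposition \ref{maintowerprop}.

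The key step is the $s_2s_1s_2$-tower, via the decomposition $\eta=s_2s_1s_2\cdot\text{diag}(\varpi,-\varpi,1,-1)$ from \eqref{ALdefeq}. Since $\eta^2=-\varpi\,I$ is central and all characters in play are unramified, $\eta B=\omega B$ forces $\omega^2=\omega_\pi(\varpi)$. Using that $\text{diag}(1,-1,1,-1)\in P_1$ and that conjugation by $s_2s_1s_2$ sends $h(1,0)$ to $\text{diag}(1,1,\varpi,\varpi)$, a direct matrix computation yields $B(h(l,m)\eta)=\omega_\pi(\varpi)\,B(h(l-1,m)\,s_2s_1s_2)$. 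Combining with $\eta B=\omega B$, the identity $\omega^2=\omega_\pi(\varpi)$, $\lambda=-q\omega$, and the main-tower recursion \eqref{main-tower-gen-l+1tol} collapses this to $B(h(l,m)s_2s_1s_2)=-q^{-2}B(h(l,m))$. Substituting into Lemma \ref{T10actionlemma} iii) and iv) reduces those identities to first-order recursions for the $s_1s_2$- and $s_2$-towers, whose boundary values vanish by Lemma \ref{automaticvanishinglemma}; the consequence relations of Lemma \ref{T01s2conslemma} propagate this to the full towers.

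In the ramified and split cases, the $\hat u_i s_1 s_2$-towers are governed by Lemma \ref{T10u0u1u2lemma}. Generically its recursions propagate the established vanishings, but the coefficient of the self-coupling term $\Lambda(\varpi_L)B(h(l,0)\hat u_0 s_1 s_2)$ (respectively $\Lambda(1,\varpi)B(h(l,0)\hat u_1 s_1 s_2)$ and $\Lambda(\varpi,1)B(h(l,0)\hat u_2 s_1 s_2)$) fails to determine the recursion precisely when $\Lambda(\varpi_L)=-\omega$ (respectively when $\Lambda(1,\varpi)=\Lambda(\varpi,1)=-\omega$). In the ramified exceptional sub-case, combining the $l=-1$ formulas of Lemmas \ref{T10u0u1u2lemma} i) and \ref{T01u0u1u2lemma} i) with the Atkin-Lehner identity for $B(h(-1,0)s_2s_1s_2)$ produces a further algebraic constraint whose leading coefficient $\mu-\Lambda(\varpi_L)\lambda$ is nonzero for the Hecke parameters of IIa, IVc, Vb, VIa; this forces $B(1)=0$ without any assumption on $B(h(0,m_0))$, whence $B=0$ by the preceding propagation. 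In the split exceptional sub-case, Lemma \ref{T10u0u1u2lemma} ii) exhibits a symmetric coupled system between the $\hat u_1 s_1 s_2$- and $\hat u_2 s_1 s_2$-towers which requires one of the values $B(\hat u_i s_1 s_2)$ as supplementary initial datum, and the symmetry of the recursion shows that vanishing at $\hat u_1 s_1 s_2$ is equivalent to vanishing at $\hat u_2 s_1 s_2$.

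For Part ii), covering type VIb where $\lambda=+q\omega$ and so the clean Step 2 cancellation fails, the explicit relation \eqref{VIbspecialeq} replaces the $\lambda=-q\omega$ hypothesis. It encodes that the Hecke operator $\sum_c\pi(n_c)+\pi(s_2)$, representing the averaging over $P_2/P_1$, annihilates $B$; this is automatically satisfied by the unique $P_1$-vector in VIb since that representation has no $P_2$-fixed vector. Evaluated at $g=h(l,m)$, \eqref{VIbspecialeq} reads $\sum_{c\in\OF/\p}B(h(l,m)n_c)+B(h(l,m)s_2)=0$. For $c=0$ the term is $B(h(l,m))$; for $c\in\OF^\times$ the identity \eqref{usefulidentityeq}, applied to the long-root $\SL_2\subset\GSp_4$ embedding sending $\mat{}{1}{-1}{}$ to $s_2$, factors $n_c=u_{c^{-1}}\,t_c\,s_2\,u_{c^{-1}}$ with $u_{c^{-1}}\in U(F)\cap P_1$ and $\theta(u_{c^{-1}})=1$ under the standard assumptions. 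Absorbing these into the Bessel transformation and $P_1$-invariance rewrites each such term as an explicit twist of a value on the $s_2$-tower, so \eqref{VIbspecialeq} collapses to a tight linear relation between the main and $s_2$-towers. Combining with the Atkin-Lehner analog of Step 2 (which still gives the $s_2s_1s_2$-tower in terms of the main tower, now with opposite sign $+q^{-2}$) and Lemma \ref{T01s2conslemma}, vanishing of $B(h(0,m_0))$ propagates through all towers, yielding $B=0$. The main obstacle throughout is the sub-case analysis of Part i): carefully tracking which coefficients degenerate in the coupled recursions of Lemmas \ref{T10u0u1u2lemma} and \ref{T01u0u1u2lemma} to identify when extra initial data is required, versus when the system forces $B=0$ unconditionally, requires delicate algebraic elimination.
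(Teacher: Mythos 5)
Your overall architecture (decompose via Proposition \ref{HFRFSipdecomplemma}, kill the main tower with Proposition \ref{maintowerprop}, transfer to the $s_2s_1s_2$-tower by Atkin--Lehner, then attack the remaining towers with the Hecke relations) is the same as the paper's, but there is a genuine gap at the crux of the $m_0>0$ case. You claim the $s_2$- and $s_1s_2$-towers are governed by ``first-order recursions whose boundary values vanish by Lemma \ref{automaticvanishinglemma}.'' They are not: automatic vanishing only gives $B(h(l,m)s_2)=0$ for $m<m_0-1$, so the $s_2$-tower values at the level $m=m_0-1$ are \emph{not} controlled, and these are exactly the values that seed everything else (via $(q-1)B(h(l,m))=q^2B(h(l,m)s_2)+q\omega B(h(l+1,m-1)s_2)$ the whole $s_2$-tower for $m\ge m_0-1$ reduces to level $m_0-1$, and via Lemma \ref{T10actionlemma}~ii) with $m=m_0-1$ the $s_1s_2$-tower at level $m_0$ reduces to the same data). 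Worse, the $T_{1,0}$-relations alone couple these unknowns circularly: combining Lemma \ref{T10actionlemma}~ii), iii), iv) with the vanishing of the main and $s_2s_1s_2$-towers yields only $(\lambda^2q^{-2}-\Lambda(\varpi))B(h(l,m_0-1)s_2)=0$, and for type IIa one has $\lambda^2q^{-2}=\alpha^2\gamma^2=\Lambda(\varpi)$, so the coefficient is identically zero. This is why the paper must bring in the $T_{0,1}$-relations (Lemma \ref{T01actionlemma}~ii) and Lemma \ref{T01s2conslemma}) and assemble a homogeneous $5\times5$ linear system in $B(h(0,m_0-1)s_2),\dots,B(h(1,m_0)s_2)$ whose matrix has to be checked to be non-singular. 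That non-degenerate linear-algebra step is the heart of the $m_0>0$ argument and is absent from your proposal.

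Two secondary issues. First, in the ramified exceptional sub-case ($m_0=0$, $\big(\frac L\p\big)=0$, $\Lambda(\varpi_L)=-\omega$) you justify the unconditional conclusion $B=0$ by asserting that a leading coefficient is ``nonzero for the Hecke parameters of IIa, IVc, Vb, VIa''; the lemma is stated for arbitrary $\lambda,\mu,\omega$ with $\lambda=-q\omega$, so an appeal to specific Satake parameters is not available, and you would need to exhibit the constraint and its non-degeneracy from the stated hypotheses. Second, in part ii) you never address the $\hat u_is_1s_2$-cosets, which do occur (for VIb one always has $m_0=0$, and the ramified/split decompositions in Proposition \ref{HFRFSipdecomplemma} contain them); the hypothesis $T_{1,0}B=\lambda B$ is not available in part ii), so Lemma \ref{T10u0u1u2lemma} cannot be used there. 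The paper instead evaluates (\ref{VIbspecialeq}) at $h(l,0)\hat u_is_1s_2s_1s_2$, using the defining property (\ref{u1u2defeq}) of the $u_i$, to obtain $qB(h(l,0)s_2s_1s_2)=-B(h(l,0)\hat u_is_1s_2)$, which is what kills those towers. (Also a small slip: $\eta^2=\varpi\cdot 1$, not $-\varpi\cdot 1$, though your conclusion $\omega^2=\omega_\pi(\varpi)$ is correct.)
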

\begin{proof}
i) From Lemma \ref{T10actionlemma} and the Atkin-Lehner relations, we get
\begin{align}
\label{onedimT10eq1}&\lambda B(h(l,m))=q^3B(h(l+1,m)), \quad \text{ for } l,m \geq 0, \\
\label{onedimT10eq5a}
& (q-1)B(h(l,m))=q^2B(h(l,m)s_2)+q\omega B(h(l+1,m-1)s_2) \quad \text{ for } l \geq 0,\: m\geq\max(m_0,1).
\end{align}

Assume that $m_0>0$ and that $B(h(0,m_0))=0$. Then, by Proposition \ref{maintowerprop}, the values $B(h(l,m))$ are zero for all $l$ and $m$. Considering (\ref{onedimT10eq5a}) with $l=0$, (\ref{onedimT10eq5a}) with $l=1$, Lemma \ref{T01s2conslemma} with $l=0$, $m=m_0-1$, (\ref{T01s2conseq4}) and Lemma \ref{T01actionlemma} ii) with $l=0$, $m=m_0-1$ we get a homogeneous system of $5$ linear equations with a non-singular matrix in the variables
$$
 B(h(0,m_0-1)s_2),\:B(h(1,m_0-1)s_2),\:B(h(2,m_0-1)s_2),\:B(h(0,m_0)s_2),\:B(h(1,m_0)s_2).
$$
Hence, the above values are equal to zero.
Lemma \ref{T01s2conslemma} then implies that $B(h(l,m_0-1)s_2)=0$ for all $l\geq0$. Using (\ref{onedimT10eq5a}), it follows that $B(h(l,m)s_2)=0$ for all $l\geq0$ and $m\geq m_0-1$. 
The Atkin-Lehner relations now imply that the main tower, $s_2$-tower, $s_1s_2$-tower and $s_2s_1s_2$-tower are all zero. By Proposition \ref{HFRFSipdecomplemma} and Lemma \ref{automaticvanishinglemma}, the function $B$ is zero. This proves i) of the lemma for $m_0>0$.

Now assume that $m_0=0$.  Using Proposition \ref{maintowerprop}, Lemma \ref{automaticvanishinglemma}, (\ref{onedimT10eq5a}), and the Atkin-Lehner relations, we can show the following: If $B(1)=0$ and $B(h(l,0)s_2)=0$ for all $l\geq0$, then $B(h(l,m)w)=0$ for all $l\in\Z$, all $m\geq0$, and all $w\in\{1,s_2,s_2s_1s_2\}$, as well as $B(h(l,m)s_1s_2)=0$ for all $l\in\Z$ and all $m\geq1$.

Assume that $\big(\frac L\p\big)=-1$ (the inert case). Then, from Lemma \ref{T10actionlemma} ii),
 (\ref{onedimT10eq1}), Atkin-Lehner relations and using $\lambda=-q\omega$, we get
$ q(q+1)B(h(l,0)s_2)=(q-1)B(h(l,0))$ for $l\geq0$. Now, from the double coset representatives in Proposition \ref{HFRFSipdecomplemma}, it follows that $B=0$ if $B(1)=0$. The other cases are similar. See \cite{Pitale-Schmidt-preprint-2012} for details.

ii) Evaluating (\ref{VIbspecialeq}) at $h(l,m)s_2$, we obtain
\begin{equation}\label{VIbspecialeq2}
 q B(h(l,m)s_2)=-B(h(l,m))
\end{equation}
for all $l,m\geq0$. Let $i=0$ in the ramified case and $i=1$ or $i=2$ in the split case. Evaluating (\ref{VIbspecialeq}) at $h(l,0)\hat u_is_1s_2s_1s_2$ leads to
\begin{equation}\label{VIbspecialeq3}
 q B(h(l,0)s_2s_1s_2)=-B(h(l,0)\hat u_is_1s_2)
\end{equation}
for $l\geq-1$; observe here the defining property (\ref{u1u2defeq}) of the elements $u_i$. Assume that $B(h(0,m_0))$, and therefore, by Proposition \ref{maintowerprop}, the whole main tower, is zero. Then, by (\ref{VIbspecialeq2}) and Lemma \ref{automaticvanishinglemma}, the entire $s_2$-tower is also zero. By the Atkin-Lehner relations, the $s_1s_2$ and $s_2s_1s_2$ towers are also zero. In the inert case, in view of the double cosets given in Proposition \ref{HFRFSipdecomplemma}, and v) of Lemma \ref{automaticvanishinglemma}, it follows that $B=0$. Taking into account (\ref{VIbspecialeq3}), the same conclusion holds in the ramified and split cases.
\end{proof}

The condition $\lambda = -q\omega$ in i) of the above lemma is satisfied by the representations of type IIa, IVc, Vb and VIa. The representations of type VIb satisfies $\lambda = q\omega$. These representations have a one-dimensional space of $P_1$-invariant vectors, but no non-zero $P_2$-invariant vectors. Hence, if a non-zero $P_1$-invariant vector $B$ is made $P_2$-invariant by summation, the result is zero -- using standard representatives, we get (\ref{VIbspecialeq}) in ii) of the above lemma.

\subsection*{The main result}
The following theorem, which is the main result of this section, identifies good test vectors for those irreducible, admissible, infinite-dimensional representations of $\GSp_4(F)$ that have a one-dimensional space of $P_1$-invariant vectors. 
\begin{theorem}\label{onedimtheorem}
 Let $\pi$ be an irreducible, admissible, Iwahori-spherical representation of $\GSp_4(F)$ that is not spherical but has a one-dimensional space of $P_1$-invariant vectors. Let $S$ be the matrix defined in (\ref{Sxidefeq}), with $\mathbf{a},\mathbf{b},\mathbf{c}$ subject to the conditions (\ref{standardassumptions}). Let $T(F)$ be the group defined in (\ref{TFdefeq}). Let $\theta$ be the character of $U(F)$ defined in (\ref{thetadefeq}), and let $\Lambda$ be a character of $T(F)\cong L^\times$. Let $m_0$ be as in (\ref{m0defeq}). We assume that $\pi$ admits a $(\Lambda,\theta)$-Bessel model. Let $B$ be an element in this Bessel model spanning the space of $P_1$-invariant vectors.
 \begin{enumerate}
  \item Assume that $\pi$ is of type IIa. Then $B(h(0,m_0))\neq0$, except in the split case with $\Lambda(1,\varpi)=-\omega=\Lambda(\varpi,1)$. In this latter case $B(1)=0$, but $B(\hat u_is_1s_2)\neq0$ for $i=1,2$. Here, the elements $\hat u_i$ are defined in (\ref{hatu1u2defeq}).
  \item If $\pi$ is of type IVc, Vb, VIa or VIb, then $B(h(0,m_0))\neq0$.
 \end{enumerate}
\end{theorem}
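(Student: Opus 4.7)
The plan is to apply Lemma \ref{onedimlemma} contrapositively. Since $B$ spans a one-dimensional space of $P_1$-fixed vectors it is nonzero, so once the lemma's hypotheses are verified and its two vanishing-forcing scenarios ruled out, the desired conclusions $B(h(0,m_0)) \neq 0$ (and, in the IIa exceptional case, $B(\hat u_i s_1 s_2) \neq 0$) will follow immediately. From Table \ref{Iheckeeigenvaluestable} one reads $\lambda = -q\omega$ for each of IIa, IVc, Vb, VIa, so part i) of the lemma is in force. For VIb one has $\lambda = q\omega$, but Table \ref{Iwahoritable} records that VIb has no nonzero $P_2$-fixed vector; the $P_2$-average of $B$ over coset representatives of $P_2/P_1$ must therefore vanish, which is exactly the hypothesis (\ref{VIbspecialeq}) of part ii) of the lemma.

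To exclude the ``bad'' ramified case of part i), namely $m_0 = 0$, $\big(\frac L\p\big) = 0$, $\Lambda(\varpi_L) = -\omega$: when $m_0 = 0$ the character $\Lambda$ is determined by $\Lambda(\varpi_L)$ together with $\Lambda|_{F^\times} = \omega_\pi$, and $\varpi_L^2 \in \varpi\OF^\times$ forces $\Lambda(\varpi_L)^2 = \omega_\pi(\varpi) = \omega^2$, so only the two options $\Lambda(\varpi_L) = \pm\omega$ remain. A direct evaluation of $(\chi\sigma)\circ N$, $(\xi\sigma)\circ N$, and $\sigma\circ N$ at $\varpi_L$, using $N(\varpi_L) \in -\varpi\OF^\times$ and the unramifiedness of all characters in play, shows that the bad value $\Lambda(\varpi_L) = -\omega$ identifies $\Lambda$ with precisely the character excluded from field-case Bessel support in Table \ref{besselmodelstable}: $(\chi\sigma) \circ N$ for IIa, $(\xi\sigma) \circ N$ for Vb, and $\sigma \circ N$ for VIa. (Type IVc has no field Bessel model at all.) Since Bessel existence is assumed, this bad case cannot arise.

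For the split exceptional case of part i), I handle IVc, Vb, and VIa individually. For IVc one reads from Table \ref{besselmodelstable} that $\{\Lambda(\varpi,1),\Lambda(1,\varpi)\} = \{q\gamma,q^{-1}\gamma\}$, so neither value equals $-\omega = \gamma$. For Vb the admissible $\Lambda = \sigma \circ N$ (restricted to the split torus via $N(a,b) = ab$) gives $\Lambda(\varpi,1) = \Lambda(1,\varpi) = \gamma = \omega \neq -\omega$. For VIa the configuration is a priori possible, but Proposition \ref{VIasplitprop} gives $B(1) \neq 0$ unconditionally, falsifying the hypothesis $B(1) = 0$ of the exceptional case. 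For IIa the exceptional configuration really does occur, and this is precisely the substance of part i) of the theorem: Proposition \ref{IIasplitprop} forces $B(1) = 0$ when $\Lambda(\varpi,1) = -\omega = \Lambda(1,\varpi)$, and Lemma \ref{onedimlemma} i) then converts the non-vanishing of $B$ into $B(\hat u_1 s_1 s_2) \neq 0$ and $B(\hat u_2 s_1 s_2) \neq 0$.

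In every remaining configuration the ``otherwise'' clause of Lemma \ref{onedimlemma} i), or part ii) for VIb, equates $B = 0$ with $B(h(0,m_0)) = 0$, so $B \neq 0$ immediately delivers the asserted non-vanishing. The main technical hurdle lies in matching the ``bad'' unramified character of $L^\times$ with the Bessel-forbidden character of Table \ref{besselmodelstable}; no input is needed beyond the results catalogued in the earlier sections.
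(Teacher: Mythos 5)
Your proposal is correct and follows essentially the same route as the paper: Lemma \ref{onedimlemma} i) for IIa, IVc, Vb, VIa, Lemma \ref{onedimlemma} ii) for VIb via the vanishing $P_2$-average (since VIb has no nonzero $P_2$-fixed vectors), Propositions \ref{IIasplitprop} and \ref{VIasplitprop} for the split IIa and VIa subtleties, and Tables \ref{Iheckeeigenvaluestable}/\ref{besselmodelstable} to rule out $\Lambda(1,\varpi)=-\omega$ for split IVc and Vb. The only (harmless) divergences are that you exclude the bad ramified case by matching $\Lambda(\varpi_L)=-\omega$ against the Bessel-forbidden character in Table \ref{besselmodelstable}, whereas the paper simply lets that bullet of the lemma force $B=0$ and contradict $B\neq0$, and that you leave implicit the change-of-model step (\ref{BBpat1eq}) needed to transport the conclusions of Propositions \ref{IIasplitprop} and \ref{VIasplitprop} from the standard split form (\ref{splitstandardSeq}) back to the matrix $S$ of the theorem.
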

\begin{proof}
Let $B$ be a non-zero $P_1$-invariant vector in the $(\Lambda,\theta)$-Bessel model of $\pi$. Then $B$ is an element of the space $\mathcal{S}(\Lambda,\theta,P_1)$. Since the space of $P_1$-invariant vectors in $\pi$ is one-dimensional, $B$ is an eigenvector for $T_{1,0}$, $T_{0,1}$ and $\eta$; let $\lambda$, $\mu$ and $\omega$ be the respective eigenvalues. Lemma \ref{onedimlemma} proves our assertions in case $m_0>0$. For the rest of the proof we will therefore assume that $m_0=0$, meaning that $\Lambda$ is unramified. 

i) In this case we have $\lambda = -q\omega$. If we are not in the split case, or if we are in the split case and $\Lambda(1,\varpi) \neq -\omega$ then our assertion follows from Lemma \ref{onedimlemma} i). Assume that we are in the split case and that $\Lambda(1,\varpi)=-\omega=\Lambda(\varpi,1)$. Then $B(1)=0$ by Proposition \ref{IIasplitprop} and (\ref{BBpat1eq}). Hence our assertion follows from Lemma \ref{onedimlemma} i).

ii) If $\pi$ is of type VIb, then $B$ satisfies (\ref{VIbspecialeq}) and the assertion follows from Lemma \ref{onedimlemma} ii). Assume that $\pi$ is of type IVc, Vb or VIa. If we are not in the split case, our assertions follow from Lemma \ref{onedimlemma} i). Assume we are in the split case, and that $\pi$ is of type IVc or Vb. Then, by Table \ref{besselmodelstable} and Table \ref{Iheckeeigenvaluestable}, we have $\Lambda(1,\varpi) \neq -\omega$. Hence our assertions follow from Lemma \ref{onedimlemma} i). Assume we are in the split case, and that $\pi$ is of type VIa. Then our assertion follows from Proposition \ref{VIasplitprop} and (\ref{BBpat1eq}).
\end{proof}
\section{The two-dimensional cases}\label{twodimsec}
In this section, let $\pi$ be an irreducible, admissible representation of $\GSp_4(F)$ of type IIIa or IVb. In both cases the space of $P_1$-invariant vectors is two-dimensional.
%
%
\subsection*{The IIIa case}
Let $B_1$ and $B_2$ be common eigenvectors for $T_{1,0}$ and $T_{0,1}$ in the IIIa case. Then, one can choose the normalizations such that
  \begin{alignat}{2}\label{IIIaB1B2eq}
   T_{1,0}\,B_1&=\alpha\gamma q\,B_1,\qquad&
    T_{1,0}\,B_2&=\gamma q\,B_2,\nonumber\\
   T_{0,1}\,B_1&=\alpha\gamma^2(\alpha q+1)q\,B_1,\qquad&
    T_{0,1}\,B_2&=\alpha\gamma^2(\alpha^{-1}q+1)q\,B_2,\\
   \eta\,B_1&=\alpha\gamma\,B_2,\qquad&
    \eta\,B_2&=\gamma\,B_1.\nonumber
  \end{alignat}
Note that $\alpha \neq 1$ in the IIIa case; see Table \ref{satakenotationtable}.
\begin{lemma}\label{twodimIIIalemma}
 Let $\Lambda$ be a character of $L^\times$, and $m_0$ as in (\ref{m0defeq}). Assume that $B_1,B_2\in\mathcal{S}(\Lambda,\theta,P_1)$ satisfy (\ref{IIIaB1B2eq}) with $\alpha\neq1$. Assume also that $\Lambda(\varpi)=\alpha\gamma^2$. Then
   $$
    B_1=0\qquad\Longleftrightarrow\qquad B_1(h(0,m_0))=0.
   $$
\end{lemma}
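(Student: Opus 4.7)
The strategy mirrors the proof of Lemma \ref{onedimlemma} i), but must work without any Atkin--Lehner relation for $B_1$ alone, since $\eta$ interchanges $B_1$ and $B_2$ rather than acting as a scalar. All vanishing therefore has to be extracted from the two eigenvalue equations $T_{1,0}B_1=\lambda B_1$ and $T_{0,1}B_1=\mu B_1$, with $\lambda=\alpha\gamma q$ and $\mu=\alpha\gamma^2(\alpha q+1)q$. The first step is to apply Proposition \ref{maintowerprop}: combining the $l$-recursion (\ref{main-tower-gen-l+1tol}) with the generating-series identity, the hypothesis $B_1(h(0,m_0))=0$ forces $B_1(h(l,m))=0$ for all $l\geq0$ and $m\geq m_0$, and together with Lemma \ref{automaticvanishinglemma} this kills the entire main tower of $B_1$.

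With the main tower dead, treat first the case $m_0>0$. The identities produced by Lemma \ref{T10actionlemma} ii) at $(l,m_0)$ and $(l+1,m_0)$, together with (\ref{T01s2conslemmaeq1}), (\ref{T01s2conseq4b}), (\ref{T01s2conslemmaeq1b}), and Lemma \ref{T01actionlemma} ii) at $(l,m_0-1)$, yield a homogeneous $5\times5$ linear system in the five $s_2$-tower values
$$
 B_1(h(j,m_0-1)s_2)\;(j=0,1,2),\qquad B_1(h(k,m_0)s_2)\;(k=0,1).
$$
After substituting the explicit $\lambda$, $\mu$ and $\Lambda(\varpi)=\alpha\gamma^2$, the determinant of the coefficient matrix should factor with $\alpha-1$ as its only essential factor; the hypotheses $\alpha\neq1$ (Table \ref{satakenotationtable}) and $\Lambda(\varpi)=\alpha\gamma^2$ (which is exactly the central-character compatibility, since the central character of a IIIa representation satisfies $\omega_\pi(\varpi)=\alpha\gamma^2$) thus make the system non-singular, forcing the five values to zero.

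Once these five boundary values vanish, the recursion of Lemma \ref{T01s2conslemma} propagates to give $B_1(h(l,m_0-1)s_2)=0$ for all $l\geq0$, and then the $m\geq m_0$ branch of Lemma \ref{T10actionlemma} ii), with main tower absent, gives $B_1(h(l,m)s_2)=0$ for all $l\geq0$, $m\geq m_0-1$. Next the $s_1s_2$-tower: for $m\geq\max(m_0,1)$, Lemma \ref{T10actionlemma} iii) reduces to $\lambda B_1(h(l,m)s_1s_2)=q^2 B_1(h(l-1,m+1)s_1s_2)$, and combining this cascade along each diagonal with the automatic vanishing $B_1(h(l,m)s_1s_2)=0$ for $l<-1$ kills the tower in this range; the remaining values at $m<m_0$ are killed by the $m<m_0$ branch of the same lemma, which now relates $W$-values to the already-vanished $V$-values. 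Finally Lemma \ref{T10actionlemma} iv) kills the $s_2s_1s_2$-tower. The case $m_0=0$ is treated analogously in each of the three sub-cases $\big(\frac{L}{\p}\big)\in\{-1,0,1\}$, using the $m=m_0=0$ branches of Lemmas \ref{T10actionlemma}, \ref{T10u0u1u2lemma}, \ref{T01actionlemma}, \ref{T01u0u1u2lemma} to produce the analogous boundary system and, in the ramified and split sub-cases, to kill the extra $\hat u_is_1s_2$-towers. At this point Proposition \ref{HFRFSipdecomplemma} gives $B_1=0$.

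The main obstacle is precisely the non-singularity check of the $5\times5$ system in the second paragraph. In the one-dimensional case of Lemma \ref{onedimlemma} i), the Atkin--Lehner eigenvalue relation $\eta B=\omega B$ gave an additional scalar constraint that collapsed the effective size of the initial system; here no such shortcut is available for $B_1$ in isolation, and one must instead verify the determinant directly after inserting the explicit eigenvalues. The factor $\alpha-1$ that emerges is what distinguishes IIIa from the spherical situation $\alpha=1$, and is the precise point at which the restriction in Table \ref{satakenotationtable} is used.
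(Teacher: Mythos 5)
Your overall skeleton (kill the main tower via Proposition \ref{maintowerprop}, then solve a homogeneous linear system for boundary values of the remaining towers, then propagate by the Hecke recursions) is the same as the paper's, but the central step of your plan does not go through as described. You explicitly renounce the Atkin--Lehner relations on the grounds that $\eta$ only interchanges $B_1$ and $B_2$, and propose to extract a non-singular $5\times5$ homogeneous system in the five values $B_1(h(j,m_0-1)s_2)$, $B_1(h(k,m_0)s_2)$ from the eigenvalue equations for $B_1$ alone. But the actions of $T_{1,0}$ and $T_{0,1}$ on the $s_2$-coset representatives (Lemma \ref{T10actionlemma} ii), Lemma \ref{T01actionlemma} ii)) unavoidably involve $s_1s_2$-tower values; the only relations purely among $s_2$-values that survive elimination are those of Lemma \ref{T01s2conslemma}, and on your five variables these supply only about two independent homogeneous constraints (the rest introduce new unknowns such as $B_1(h(2,m_0)s_2)$, $B_1(h(3,m_0-1)s_2)$, \dots). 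There is no $5\times5$ system to analyze. The interchanging Atkin--Lehner relations are in fact indispensable: they identify the $s_1s_2$-tower of $B_1$ with the $s_2$-tower of $B_2$ (and the main tower with the $s_2s_1s_2$-tower), which is precisely how the paper imports the $T_{0,1}$-eigenvalue equations for $B_2$ and assembles an \emph{eight}-equation system in \emph{seven} unknowns, including $B_1(h(\pm1,m_0)s_1s_2)$, $B_1(h(-1,m_0)s_1s_2)$ and $B_1(h(0,m_0)s_2s_1s_2)$. Note also that the paper must work with the overdetermined $8\times7$ system because for certain $\alpha$ a fixed $7\times7$ subsystem is singular; your hope that a single determinant factors with $\alpha-1$ as its only essential factor is not enough.

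A second, smaller gap: your cascade for the $s_1s_2$-tower, $\lambda B_1(h(l,m)s_1s_2)=q^2B_1(h(l-1,m+1)s_1s_2)$, is only available for $l\geq0$ and therefore terminates at $l=-1$, whereas automatic vanishing (Lemma \ref{automaticvanishinglemma}) only covers $l<-1$. The values $B_1(h(-1,m)s_1s_2)$ are not killed by this argument; this is exactly why the paper includes $B_1(h(-1,m_0)s_1s_2)$ among the unknowns of its linear system and then uses (\ref{3aT10eq318}) together with (\ref{3aT10eq328}), (\ref{3aT10eq428}) and induction to finish. Your reduction of the main tower and your intended use of Proposition \ref{HFRFSipdecomplemma} at the end are fine, but as written the proof has a genuine hole at its core.
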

\begin{proof}
Using the Atkin-Lehner relations (\ref{IIIaB1B2eq}) and Lemma \ref{T10actionlemma}, we get 
for any $l \geq -1$ and $m \geq 0$,
\begin{equation}\label{B1-s2s1s2-l-l+1}
\gamma B_1(h(l,m)s_2s_1s_2)=q^2B_1(h(l+1,m)s_2s_1s_2),
\end{equation}
and, for $l \geq 0$ and $m \geq {\rm max}(m_0,1)$, the three equations
\begin{align}
 \label{3aT10eq318}0 &= -\alpha\gamma q B_1(h(l,m)s_1s_2) + q^2B_1(h(l-1,m+1)s_1s_2)
  +\alpha \gamma (q-1)B_1(h(l,m)),\\
 \label{3aT10eq328}0 &= -\gamma q B_1(h(l+1,m-1)s_2) + q^2 B_1(h(l,m)s_2)
  +q^2(q-1) B_1(h(l,m)s_2s_1s_2),\\
 \label{3aT10eq428}0 &= q^3 B_1(h(l,m)s_2s_1s_2) + q^2 B_1(h(l,m)s_2) + q B_1(h(l,m)s_1s_2) + B_1(h(l,m)).
\end{align}
  
We will first consider the case ${\bf m_0 > 0}$. Assume that $B_1(h(0,m_0))=0$; we will show that $B_1=B_2=0$. Note that, by the Atkin-Lehner relations, we only need to show that $B_1=0$. By Proposition \ref{maintowerprop}, we know that $B_1(h(l,m)) = 0$ for all $l,m \geq 0$. Considering (\ref{3aT10eq428}) with $l=0$ and $m=m_0$, Lemma \ref{T01actionlemma} ii) with $B=B_2$, $l=0$ and $m=m_0-1$, (\ref{T01s2conslemmaeq1b}) applied to $B_1$, (\ref{T01s2conseq4b}) applied to $B_1$, Lemma \ref{T01actionlemma} ii) with $B=B_2$, $l=0$ and $m=m_0$, Lemma \ref{T01actionlemma} ii) with $B=B_1$, $l=0$ and $m=m_0-1$, (\ref{T01s2conslemmaeq1b}) applied to $B_2$ and Lemma \ref{T01actionlemma} ii) with $B=B_1$, $l=0$ and $m=m_0$, 
we get a homogeneous system of $8$ linear equations in the $7$ variables
\begin{align}\label{lin-sys-var}
 &B_1(h(0,m_0-1)s_2),\;B_1(h(0,m_0)s_2),\;B_1(h(1,m_0)s_2),\;B_1(h(0,m_0)s_1s_2),\nonumber\\ 
 &B_1(h(1,m_0)s_1s_2),\;B_1(h(-1,m_0)s_1s_2),\;B_1(h(0,m_0)s_2s_1s_2).
\end{align}
For any $\alpha$, either the set of first $7$ equations or the set of last $7$ equations has a non-singular matrix. Hence, all the values in (\ref{lin-sys-var}) are zero. Now (\ref{T01s2conslemmaeq1}) and (\ref{3aT10eq328}) imply that
$
 B_1(h(l,m)s_2) = 0$  for all $l \geq 0, \,\,m = m_0-1$ or $m=m_0$.
Using (\ref{B1-s2s1s2-l-l+1}) and (\ref{3aT10eq428}), we get
$B_1(h(l,m_0)s_2s_1s_2) = B_1(h(l,m_0)s_1s_2) = 0$ for $l \geq -1$. Now, using (\ref{3aT10eq318}), (\ref{3aT10eq328}), (\ref{3aT10eq428}), induction,  Proposition \ref{HFRFSipdecomplemma} and the automatic vanishing from Lemma \ref{automaticvanishinglemma}, it follows that $B_1=0$. This concludes our proof in case $m_0>0$.

We next consider the case ${\bf m_0 = 0}$. Assume that $B_1(1)=0$; we will show that $B_1=0$. By Proposition \ref{maintowerprop}, we know that $B_1(h(l,m)) = 0$ for all $l,m \geq 0$. Suppose we can show that $B_1$ vanishes on all the double coset representatives in Proposition \ref{HFRFSipdecomplemma} that have $m=0$ and that $B_1(h(l,1)s_1s_2) = 0$ for all $l\geq-1$. Using induction and (\ref{3aT10eq318}), we get $B_1(h(l,m)s_1s_2) = 0$ for all $l\geq-1$ and $m\geq0$. Now, induction and (\ref{3aT10eq328}), (\ref{3aT10eq428}), gives us $B_1 = 0$, and hence, $B_2 = 0$. 

We will give the proof of the inert case $\big(\frac L{\p}\big) = -1$ here. The other cases are similar (see \cite{Pitale-Schmidt-preprint-2012}). Using Lemma \ref{T10actionlemma} ii), we get, for $l \geq 0$,
\begin{equation}\label{m0=0-inert-3a-1}
 \alpha \gamma q B_1(h(l,0)s_2) = q^2 B_1(h(l-1,1)s_1s_2).
\end{equation}
Using (\ref{B1-s2s1s2-l-l+1}) and Lemma \ref{T10actionlemma} iv), we get, for $l \geq 0$,
\begin{equation}\label{m0=0-inert-3a-2}
 \alpha \gamma q B_1(h(l,0)s_2s_1s_2) = -(q+1)B_1(h(l-1,1)s_1s_2).
\end{equation}
Hence, from (\ref{B1-s2s1s2-l-l+1}), (\ref{m0=0-inert-3a-1}) and (\ref{m0=0-inert-3a-2}), we get, for $l \geq 0$, 
\begin{equation}\label{m0=0-inert-3a-3}
 \gamma B_1(h(l,0)s_2) = q^2 B_1(h(l+1,0)s_2).
\end{equation}
Using (\ref{m0=0-inert-3a-1}), (\ref{m0=0-inert-3a-3}) and Lemma \ref{T01actionlemma} ii), we get
\begin{equation}\label{m0=0-inert-3a-4}
 \alpha \gamma^2 (\alpha q +1) q B_1(s_2) = q^4 B_1(h(0,1)s_1s_2) = \alpha \gamma q^3 B_1(h(1,0)s_2) = \alpha \gamma^2 q B_1(s_2).
\end{equation}
Since $\alpha \neq 0$ , we get $B_1(s_2) = 0$. Now (\ref{m0=0-inert-3a-1}), (\ref{m0=0-inert-3a-2}) and (\ref{m0=0-inert-3a-3}) implies that $B_1$ vanishes on all the double coset representatives in Proposition \ref{HFRFSipdecomplemma} that have $m=0$. Hence $B_1 =0$, as claimed.
\end{proof}
\subsection*{The IVb case}
Let $B_1$ and $B_2$ be common eigenvectors for $T_{1,0}$ and $T_{0,1}$ in the IVb case. We can choose the normalizations such that
  \begin{alignat}{2}\label{IVbB1B2eq}
   T_{1,0}\,B_1&=\gamma\,B_1,\qquad&
    T_{1,0}\,B_2&=\gamma q^2\,B_2,\nonumber\\
   T_{0,1}\,B_1&=\gamma^2(q+1)\,B_1,\qquad&
    T_{0,1}\,B_2&=\gamma^2q(q^3+1)\,B_2,\\
   \eta\,B_1&=\gamma\,B_2,\qquad&
    \eta\,B_2&=\gamma\,B_1.\nonumber
  \end{alignat}
Recall that $\gamma = \sigma(\varpi)$, where $\sigma$ is an unramified character. From Table \ref{besselmodelstable}, a $(\Lambda,\theta)$-Bessel model exists if and only if $\Lambda = \sigma \circ N_{L/F}$. In particular, the number $m_0$ defined in (\ref{m0defeq}) must be zero, i.e., $\Lambda$ must be unramified. The central character condition is equivalent to $\Lambda(\varpi) = \gamma^2$. Moreover, in the ramified case $\big(\frac L\p\big)=0$, evaluating at $\varpi_L$, we get $\Lambda(\varpi_L)=\gamma$, and in the split case $\big(\frac L\p\big)=1$, evaluating at $(\varpi,1)$ and $(1,\varpi)$, we get $\Lambda(\varpi,1)=\Lambda(1,\varpi)=\gamma$.
\begin{lemma}\label{twodimIVblemma}
 Let $\Lambda$ be an unramified character of $L^\times$ satisfying $\Lambda(\varpi) = \gamma^2$. If $\big(\frac L\p\big)=0$, assume that $\Lambda(\varpi_L)=\gamma$, and if $\big(\frac L\p\big)=1$, assume that $\Lambda(\varpi,1)=\Lambda(1,\varpi)=\gamma$. Assume that $B_1,B_2\in\mathcal{S}(\Lambda,\theta,P_1)$ satisfy (\ref{IVbB1B2eq}). Then
   $$
    B_1=0\quad\Longleftrightarrow\quad B_1(1)=0\quad\Longleftrightarrow\quad B_2(1)=0\quad\Longleftrightarrow\quad B_2=0.
   $$
\end{lemma}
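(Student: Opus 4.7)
The plan has three stages. First, I would exploit the Atkin-Lehner relations in (\ref{IVbB1B2eq}): since $\eta B_1=\gamma B_2$, $\eta B_2=\gamma B_1$, and $\gamma=\sigma(\varpi)\neq 0$, applying $\eta$ gives a linear isomorphism between the lines spanned by $B_1$ and $B_2$, so $B_1=0\Longleftrightarrow B_2=0$ immediately. Combined with the trivial implications $B_i=0\Longrightarrow B_i(1)=0$, the content of the lemma reduces to showing $B_i(1)=0\Longrightarrow B_i=0$ for each $i\in\{1,2\}$ individually.

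Fix $i$. Under the hypotheses, $\Lambda$ is unramified, so $m_0=0$, and the pair $(\lambda,\mu)$ governing $B_i$ is either $(\gamma,\gamma^2(q+1))$ or $(\gamma q^2,\gamma^2q(q^3+1))$. The assumption $B_i(1)=B_i(h(0,0))=0$ combined with Proposition \ref{maintowerprop} instantly yields $B_i(h(l,m))=0$ for all $l,m\geq 0$, that is, the entire main tower vanishes.

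The main step is to propagate this vanishing to the remaining towers from Proposition \ref{HFRFSipdecomplemma}, proceeding in close analogy with the $m_0=0$ portion of the proof of Lemma \ref{twodimIIIalemma}. Using Lemma \ref{T10actionlemma} ii)--iv), Lemma \ref{T01actionlemma} ii), Lemma \ref{T10u0u1u2lemma}, Lemma \ref{T01u0u1u2lemma} and Lemma \ref{T01s2conslemma} (applied for $m\geq 1$), one assembles a homogeneous linear system in the finitely many initial unknowns
$$
B_i(s_2),\quad B_i(h(-1,1)s_1s_2),\quad B_i(\hat u_j s_1s_2) \text{ for the relevant } j,
$$
the list of $j$'s depending on whether $\big(\frac{L}{\p}\big)$ equals $-1$, $0$, or $1$. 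Plugging in the IVb eigenvalues and the prescribed values $\Lambda(\varpi)=\gamma^2$, together with $\Lambda(\varpi_L)=\gamma$ in the ramified case and $\Lambda(\varpi,1)=\Lambda(1,\varpi)=\gamma$ in the split case, one verifies in each of the three sub-cases that the coefficient matrix is non-singular, forcing all initial values to vanish. An induction on $l$ and $m$, using the same Hecke operator identities together with Lemma \ref{automaticvanishinglemma}, then propagates the vanishing to every double-coset representative in Proposition \ref{HFRFSipdecomplemma}, giving $B_i=0$.

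The main obstacle is the bookkeeping and non-singularity check in the third stage. Because $m_0=0$, the clean simplifications available when $m_0>0$ (such as (\ref{T01s2conseq4b}) and (\ref{T01s2conslemmaeq1b})) are not at one's disposal, and the $s_2$-tower couples non-trivially to the $\hat u_j s_1s_2$ cosets through the $m=m_0=0$ branches of Lemmas \ref{T10actionlemma} and \ref{T01actionlemma}. Non-singularity must be verified separately for each of the three possibilities for $\big(\frac{L}{\p}\big)$, and the verification relies on both the distinctness of the two IVb eigenvalue pairs and the specific factorization of $\Lambda$ on the units forced by $\Lambda=\sigma\circ N$. The actual matrix computations are entirely parallel to those appearing in Lemma \ref{twodimIIIalemma}, and I would refer the reader to \cite{Pitale-Schmidt-preprint-2012} for the remaining details.
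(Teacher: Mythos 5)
Your proposal is correct and follows essentially the same route as the paper, which itself omits the details and states only that the argument is "similar to that of Lemma \ref{twodimIIIalemma}, $m_0=0$ case" with full computations deferred to \cite{Pitale-Schmidt-preprint-2012}. Your reduction via the Atkin-Lehner relations, the use of Proposition \ref{maintowerprop} to kill the main tower, and the case-by-case propagation modeled on the $m_0=0$ portion of the IIIa proof match the intended argument.
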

\begin{proof} The proof is similar to that of Lemma \ref{twodimIIIalemma}, $m_0=0$ case, and is therefore omitted. See \cite{Pitale-Schmidt-preprint-2012} for details.
\end{proof}
\subsection*{The main result}
The following theorem identifies good test vectors for those irreducible, admissible representations of $\GSp_4(F)$ that are not spherical but have a two-dimensional space of $P_1$-invariant vectors. Recall from Table \ref{Iwahoritable} that these are precisely the Iwahori-spherical representations of type IIIa and IVb.

\begin{theorem}\label{twodimtheorem}
 Let $\pi$ be an irreducible, admissible, Iwahori-spherical representation of $\GSp_4(F)$ that is not spherical but has a two-dimensional space of $P_1$-invariant vectors. Let $S$ be the matrix defined in (\ref{Sxidefeq}), with $\mathbf{a},\mathbf{b},\mathbf{c}$ subject to the conditions (\ref{standardassumptions}). Let $T(F)$ be the group defined in (\ref{TFdefeq}). Let $\theta$ be the character of $U(F)$ defined in (\ref{thetadefeq}), and let $\Lambda$ be a character of $T(F)\cong L^\times$. Let $m_0$ be as in (\ref{m0defeq}). We assume that $\pi$ admits a $(\Lambda,\theta)$-Bessel model. Then the space of $P_1$-invariant vectors is spanned by common eigenvectors for the Hecke operators $T_{1,0}$ and $T_{0,1}$, and if $B$ is any such eigenvector, then $B(h(0,m_0))\neq0$.
\end{theorem}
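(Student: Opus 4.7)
The plan is to reduce to the two preceding lemmas after two preliminary verifications: existence of a basis of common eigenvectors, and compatibility of the hypotheses on $\Lambda$ with the Bessel model existence condition.

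First, I would establish that the two-dimensional space $V^{P_1}$ is spanned by common eigenvectors. As remarked immediately after Table \ref{Iheckeeigenvaluestable}, the endomorphisms $T_{1,0}$ and $T_{0,1}$ commute on $V^{P_1}$. In type IIIa the two $T_{1,0}$-eigenvalues $\alpha\gamma q$ and $\gamma q$ are distinct, because Table \ref{satakenotationtable} imposes $\alpha\neq 1$; in type IVb the eigenvalues $\gamma$ and $\gamma q^2$ are obviously distinct. Consequently $T_{1,0}$ is already diagonalizable with one-dimensional eigenspaces, and these eigenspaces are automatically $T_{0,1}$-stable, yielding the claimed basis. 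In particular, any common eigenvector $B\in V^{P_1}$ is a non-zero scalar multiple of $B_1$ or of $B_2$ as normalized in (\ref{IIIaB1B2eq}) or (\ref{IVbB1B2eq}), so the problem reduces to showing $B_i(h(0,m_0))\neq 0$ for $i=1,2$.

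Next I would verify the hypotheses of Lemma \ref{twodimIIIalemma} and Lemma \ref{twodimIVblemma}. For type IIIa the necessary condition $\Lambda|_{F^\times}=\omega_\pi$ combined with Table \ref{satakenotationtable} yields $\Lambda(\varpi)=\alpha\gamma^2$, which is exactly what Lemma \ref{twodimIIIalemma} requires. For type IVb the existence condition from Table \ref{besselmodelstable} is $\Lambda=\sigma\circ N$; this forces $\Lambda$ to be unramified (hence $m_0=0$) and gives $\Lambda(\varpi)=\sigma(\varpi^2)=\gamma^2$. In the ramified case $N(\varpi_L)=\varpi u$ with $u\in\OF^\times$, so $\Lambda(\varpi_L)=\sigma(\varpi)=\gamma$; in the split case $N(\varpi,1)=N(1,\varpi)=\varpi$, yielding $\Lambda(\varpi,1)=\Lambda(1,\varpi)=\gamma$. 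All hypotheses are satisfied in each case.

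Finally I would invoke the lemmas. Lemma \ref{twodimIVblemma} directly handles type IVb: it gives $B_1(1)\neq 0$ and $B_2(1)\neq 0$, and since $m_0=0$ this is the desired conclusion. Lemma \ref{twodimIIIalemma}, as stated, yields $B_1(h(0,m_0))\neq 0$. For the $B_2$ statement in type IIIa one reruns the proof of Lemma \ref{twodimIIIalemma} verbatim, interchanging the roles of $B_1$ and $B_2$; this amounts to replacing $\alpha$ by $\alpha^{-1}$ throughout the chain of linear relations (\ref{B1-s2s1s2-l-l+1})--(\ref{3aT10eq428}) and the associated $7\times 8$ linear system, using the restriction $\alpha\neq 1, q^{\pm 2}$ exactly as before to guarantee non-singularity. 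The main obstacle is precisely this bookkeeping: one must check that the linear algebra arguments in the proof of Lemma \ref{twodimIIIalemma}, and in particular the non-degeneracy of the system in the variables (\ref{lin-sys-var}), remain valid under this swap. Once that is done, $B_2(h(0,m_0))\neq 0$ follows, completing the proof.
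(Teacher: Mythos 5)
Your proposal is correct and follows essentially the same route as the paper: both reduce to Lemma \ref{twodimIIIalemma} and Lemma \ref{twodimIVblemma} after verifying the hypotheses on $\Lambda$, and both obtain the $B_2$ statement in type IIIa from the symmetry interchanging the two eigenvectors. The paper packages that last step more efficiently by observing that the substitution $\gamma\mapsto\alpha^{-1}\gamma$ followed by $\alpha\mapsto\alpha^{-1}$ carries the relations (\ref{IIIaB1B2eq}) into themselves with $B_1$ and $B_2$ swapped (note this is not just $\alpha\mapsto\alpha^{-1}$, one must also rescale $\gamma$), so Lemma \ref{twodimIIIalemma} applies as a black box with $\alpha^{-1}$ in place of $\alpha$ (still $\neq 1$) and the re-examination of the linear system that you flag as the main bookkeeping burden is not actually needed.
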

\begin{proof}
Assume first that $\pi$ is of type IIIa. Then $\pi$ has a Bessel model with respect to any $\Lambda$; see Table \ref{besselmodelstable}. Let $B_1$ and $B_2$ be the $P_1$-invariant vectors which are common eigenvectors for $T_{1,0}$ and $T_{0,1}$, as in (\ref{IIIaB1B2eq}). Then $B_1(h(0,m_0))\neq0$ by Lemma \ref{twodimIIIalemma}. If we replace $\gamma$ by $\alpha^{-1}\gamma$ and then $\alpha$ by $\alpha^{-1}$ in the equations (\ref{IIIaB1B2eq}), then the roles of $B_1$ and $B_2$ get reversed. This symmetry shows that also $B_2(h(0,m_0))\neq0$.

Now assume that $\pi$ is the representation $L(\nu^2,\nu^{-1}\sigma\St_{\GSp(2)})$ of type IVb. Then, by Table \ref{besselmodelstable}, we must have $\Lambda=\sigma\circ N_{L/F}$. In particular, $\Lambda$ is unramified and satisfies the hypotheses of Lemma \ref{twodimIVblemma}. Let $B_1$ and $B_2$ be the $P_1$-invariant vectors which are common eigenvectors for $T_{1,0}$ and $T_{0,1}$, as in (\ref{IVbB1B2eq}). Then $B_1(1)\neq0$ and $B_2(1)\neq0$ by Lemma \ref{twodimIVblemma}.
\end{proof}

\bibliography{BSI}{}
\bibliographystyle{plain}

\end{document}